\newcommand{\globalcolor}[1]{%
  \color{#1}\global\let\default@color\current@color
}
\newif\ifdark
\definecolor{darkred}{rgb}{0.9,0.2,0.2}
\definecolor{darkblue}{rgb}{0.7,0.3,1}
\definecolor{darkgreen}{rgb}{0.1,0.9,0.1}
\definecolor{pagebackground}{rgb}{.15,.21,.18}
\definecolor{pageforeground}{rgb}{.84,.84,.85}
\definecolor{darkred}{rgb}{0.7,0.1,0.1}
\definecolor{darkblue}{rgb}{0.4,0.1,0.8}
\definecolor{darkgreen}{rgb}{0.1,0.7,0.1}
\definecolor{pagebackground}{rgb}{1,1,1}
\definecolor{pageforeground}{rgb}{0,0,0}
\DeclareMathAlphabet{\mathbbm}{U}{bbm}{m}{n}
\DeclareFontFamily{U}{BOONDOX-calo}{\skewchar\font=45 }
\DeclareFontShape{U}{BOONDOX-calo}{m}{n}{
  <-> s*[1.05] BOONDOX-r-calo}{}
\DeclareFontShape{U}{BOONDOX-calo}{b}{n}{
  <-> s*[1.05] BOONDOX-b-calo}{}
\DeclareMathAlphabet{\mcb}{U}{BOONDOX-calo}{m}{n}
\SetMathAlphabet{\mcb}{bold}{U}{BOONDOX-calo}{b}{n}
\let\epsilon\varepsilon
\def\f{\frac}
\def\1{\mathbf{1}}
\def\E{{\symb E}}
\def\Lip{{\mathrm{Lip}}}
\def\Osc{{\mathrm{Osc}}}
\def\${|\!|\!|}
\def\cst#1{\big(H-{\textstyle {#1\over 2}}\big)}
\def\id{\mathrm{id}}
\def\BC{\mathrm{BC}}
\def\RKHS{\normalfont\textsc{rkhs}}
\def\<{\langle}
\def\>{\rangle}
\setlist{noitemsep,topsep=4pt}
\def\para_#1{/\!\!/_{\!#1}}
\def\restr{\mathord{\upharpoonright}}
\def\slash{\kern0.18em/\penalty\exhyphenpenalty\kern0.18em}
\def\dash{\kern0.18em--\penalty\exhyphenpenalty\kern0.18em}
\newcommand*{\fat}{}
\DeclareRobustCommand*{\fat}{%
\mathbin{\mathpalette\bigcdot@{}}}
\newcommand*{\bigcdot@scalefactor}{.5}
\newcommand*{\bigcdot@widthfactor}{1.15}
\newcommand*{\bigcdot@}[2]{%
  \sbox0{$#1\vcenter{}$}
  \sbox2{$#1\cdot\m@th$}%
  \hbox to \bigcdot@widthfactor\wd2{%
    \hfil
    \raise\ht0\hbox{%
      \scalebox{\bigcdot@scalefactor}{%
        \lower\ht0\hbox{$#1\bullet\m@th$}%
      }%
    }%
    \hfil
  }%
}
\newtheorem{assumption}[lemma]{Assumption}
\title{Averaging dynamics driven by fractional Brownian motion}
\author{Martin~Hairer and Xue-Mei~Li}
\institute{Imperial College London, UK\\
\email{m.hairer@imperial.ac.uk, xue-mei.li@imperial.ac.uk}}
\begin{document}
\maketitle
\begin{abstract}
We consider slow\slash fast systems where the slow system is driven by fractional Brownian
motion with Hurst parameter $H>{1\over 2}$. We show that unlike in the case $H={1\over 2}$,
convergence to the averaged solution takes place in probability and the limiting process 
solves the `na\"\i vely' averaged equation. Our proof strongly relies on the recently obtained
stochastic sewing lemma. \\[.4em]
\noindent {\scriptsize \textit{Keywords:} Fractional Brownian motion, averaging, slow\slash fast system, sewing lemma}\\
\noindent {\scriptsize\textit{MSC classification:} 60G22, 60H10, 60H05} 
\end{abstract}
\setcounter{tocdepth}{2}

\tableofcontents

\section{Introduction}
The purpose of the paper is to study a two-scale stochastic evolution on $\R^d$ with memory
of the type
\begin{equ}
dx_t^\eps = f(x_t^\eps,y_t^\eps)\,dB_t + g(x_t^\eps,y_t^\eps)\,dt\;, \label{e:equationx-1}
\end{equ}
where $B$ is an $m$-dimensional fractional Brownian motion (fBm) of Hurst parameter $H> \f 12$, 
$f: \R^d\times \CY \to L(\R^{m},\R^d)$ and  $g:\R^d\times \CY \to  \R^d$. 
The fast variable $y_t$ is assumed to take values in a state space $\CY$ which is
either an arbitrary Polish space or a compact manifold, depending on the situation.  
We will consider both the case in which the dynamic of $y$ is given, 
independently of that of $x$, and the case in which the current state of $x$ influences the dynamic of $y$.
In the latter case, we will assume that the dynamic of $y$ is Markovian, conditional on $B$.

We recall that one-dimensional fractional Brownian motion is the centred Gaussian process with $B_0 = 0$
and covariance $\E(B_t-B_s)^2=|t-s|^{2H}$.
An $\R^m$-valued fBm $(B_t^1, \dots, B_t^m)$ is obtained by taking 
 i.i.d.\ copies of a one-dimensional  fBm. A fBm
 is not  a semi-martingale and does not have independent increments.
It does however have a version such that almost all of its sample paths $t\mapsto B_t(\omega)$
 are H\"older continuous of order $\alpha$ for any $\alpha<H$.

Let us first consider the simple case in which the fast process $y$ has no feedback from $x$ and 
is of the form $y^\eps_t = Y_{t/\eps}$ for some process $Y$  
which is almost surely H\"older continuous of order $\alpha$ with $\alpha + H > 1$. 
The integral  appearing in \eqref{e:equationx-1} can then be interpreted as a Young integral. 
For the processes $\{ x^\epsilon_{\fat}, \epsilon>0\} $ to have a limit, 
we would at the very least need uniform bounds. The usual Young bound however 
only gives an estimate of the form
$$\Big|\int_0^t f(x_s^\epsilon, y_s^\epsilon) db_s\Big|\lesssim |f(x_{\fat}^\epsilon, y_{\fat}^\epsilon)|_\alpha |b|_\beta\;,$$
which is not very helpful since the process $y^\epsilon$ is in general 
expected to have a H\"older norm of order $\f 1 {\epsilon^\alpha}$. 
Proving these bounds present unexpected difficulties. In the case where $B$ is a Brownian motion,
the desired estimates follow quite easily from It\^o's isometry and / or the 
Burkholder--Davis--Gundy inequality. They are of course not available in our setting,
but we would nevertheless like to exploit the stochastic nature of the fractional Brownian motion. 
We resolve this problem by using a carefully chosen approximation to the Young integral and using a recently discovered stochastic sewing lemma by Lê \cite{Khoa}. Our main result in this setting
is given by Theorem~\ref{weak-limits} below. When combining it with Lemma~\ref{le:crucial},
this can be formulated as follows.

\begin{theorem*}\label{thm:indep}
Let $\CY$ be a Polish space and let $f,g$  be bounded measurable and of class $\BC^2$ in their first argument. Let $y^\eps_t = Y_{t/\eps}$ for a $\CY$-valued stationary stochastic
process $Y$ that is independent of $B$ and is strongly mixing with rate $t^{-\delta}$ for
some $\delta > 0$ in the sense that
\begin{equ}
\sup\big\{\P (A \cap \bar A) -  \P(A)\, \P(\bar A)\,:\, A\in \sigma(Y_0), \bar A\in \sigma(Y_t)\}
\lesssim t^{-\delta}\;.
\end{equ} 
Let $\bar f(x) = \int f(x,y)\,\mu(dy)$, where $\mu$ is the law of $Y_0$, and
similarly for $g$.
Then, any solutions to \eqref{e:equationx-1} converge in probability to the solution to
\begin{equ}[e:averagedSDE]
d\bar x_t = \bar f(\bar x_t)\,dB_t + \bar g(\bar x_t)\,dt\;,
\end{equ}
with the same initial value.
\end{theorem*}

\begin{remark}
This is very different from the case where $B$ is a Wiener process. In that case, one cannot expect 
convergence in probability and the weak limit solves a diffusion with averaged generator, see e.g.
\cite{Hasminski, KLO,FW,Kifer,Skorohod-Hoppensteadtt-Salehi,Pavliotis-Stuart, averaging, Li}, which
is different in general from the diffusion with averaged diffusion coefficients appearing here. 
In this sense, equations driven by fBm with $H > \f12$ behave more like ODEs rather than SDEs.

Note however that our convergence in probability refers to convergence in probability in the 
full `product' probability space on which \textit{both} $B$ and $Y$ live. In particular,
we do not know whether the convergence to $\bar x$ holds with $B$ replaced by any given 
$b \in \CC^\beta$ with $\beta < H$. In the lingo of diffusions in random environment,
our convergence result is `annealed' rather than `quenched'.
\end{remark}

\begin{remark}
It would be natural to take for $y$ the solution to an SDE driven by a fractional Brownian motion
independent of $B$. Unfortunately, it is not clear whether the results of 
\cite{Additive,Hormander,Rough} concerning the ergodicity of such processes can be strengthened in
order to satisfy the assumptions of Theorem~\ref{thm:indep}.
\end{remark}

The other case we consider is when the state of the slow variable $x$ feeds back into
the dynamic of the fast variable $y$. 
In this case, we restrict ourselves to the case when $\CY$ is a compact Riemannian manifold and $y$
is given by the solution to 
\begin{equ}
dy_t^\eps = {1\over \eps} V_0(x_t^\eps,y_t^\eps)\,dt + {1\over \sqrt \eps}V(x_t^\eps,y_t^\eps)\circ d\hat W_t\;, 
\label{e:equationy-1}
\end{equ}
where, for any fixed value $x\in \R^d$,  $V_i(x,\fat)$ are vector fields on a state space $\CY$, and where $\hat W$ is an $\hat m$-dimensional
standard Wiener process that is independent  of $B$.

Since solutions to this equation are expected to be H\"older continuous of any order $\alpha<\f 12$, 
the integral with respect to $B$ appearing in \eqref{e:equationx-1} can still be interpreted as a Young 
integral for any fixed $\eps > 0$. Since the slow and fast variables interact with each other however, 
a solution theory with mixed Young and It\^o integrals must be used.  
Such a theory is available in the literature, see for example the work by 
Guerra--Nualart \cite{Guerra-Nualart} extending 
Kubilius \cite{Kubilius}, as well as \cite{dasilva-Erraoui-ElHassan}.  
Our main theorem is the following result, which is a slight reformulation of Theorem~\ref{thm:main} below.

\begin{theorem*} \label{thm:feedback}
Let $f,g$ and the $V_i$ satisfy Assumption~\ref{ass:main} below, 
let $B_t$ be a fBm of Hurst parameter $H>\f 12$,
and let $\hat W_t$ be an independent Brownian motion.
For every $x \in \R^d$, let $\mu^x$ denote the (unique) invariant measure 
for \eqref{e:equationy-1} with $x^\eps_t$ replaced by $x$.
As before, let $\bar f(x) = \int f(x,y)\,\mu^x(dy)$, and similarly for $\bar g$.

Then, as $\epsilon\to 0$,
the process $x_t^\eps$ converges in probability in $\CC^\alpha$ (for any $\alpha<H$)  
to the unique limit $\bar x_t$ solving \eqref{e:averagedSDE} with the same initial value.
\end{theorem*}

\subsection{Outline of the article}

In both cases, the proof of convergence of the slow variable is based on a deterministic residue bound, 
Lemma~\ref{general-convergence}. This is a quite general statement about differential equations with 
Young integration,  not involving any stochastic element nor needing any preparation,  
and  is therefore given at the very beginning of the article, 
even though it becomes relevant only in the later stages.

Section~\ref{section-no-feedback} is devoted to the proof of Theorem~\ref{thm:indep}. In order
to prepare for this, we use  the Mandelbrot--Van Ness representation of $B_t$ by a Wiener process $W_t$. 
We then make use of the  observation,  \cite{Additive}, that  the filtration $\CG_t$ generated by the increments of $B$ up to time $t$ is the same as that generated by $W$, 
and that $B$ can be decomposed for $t>0$ as $B_t=\bar B_t+\tilde B_t$, where $\bar B_t$ is 
smooth in $t$ and $\tilde B_t$ is 
independent of $\CG_0$. Such a split can be made with reference to any $\CG_u$ for any time $u$,
which allows us to define integrals of the type
\begin{equ}
\int_u^v F(s)\,dB(s)\;,
\end{equ}
for $\CG_u$-measurable processes $F$, as the sum of a Wiener integral against $\tilde B_t$
and a Riemann--Stieltjes integral against the smooth function $\bar B_t$.
The stochastic sewing lemma then allows us to extend this integration to a class of adapted integrands
which are allowed to be quite singular (much more than what Young integration would allow), but such 
that the singular part of their behaviour is independent of $B$ in a suitable sense.
This is the content of Lemma~\ref{lem:mainTight}, which is the main ingredient
of the proof of Theorem~\ref{thm:indep} given in Section~\ref{sec:semidet}.


Section~\ref{sec:averaging}
is devoted to the proof of Theorem~\ref{thm:feedback}. The main ingredient of the proof is 
given by Theorem \ref{fixed-point-map} where we show that one has a bound
 $$
\left\|\int_s^t(  h(x_r, y_r^\eps)- \bar h(x_r)) \, dB_r \right\|_{L^p} \leq
 C\,  \eps^\kappa   \left( \|x\|_{\alpha, p} |t-s|^{\bar \eta} 
+ |t-s|^{\eta}\right)\;,
$$
for some $\eta > {1\over 2}$ and $\bar \eta > 1$,
where $\bar h$ is the average of $h$. 
Compared to the results in Section~\ref{section-no-feedback}, the difficulty here is that the process $y^\eps$
does depend on $x$ (and therefore also on $B$) via \eqref{e:equationy-1}.
The main idea is to interpret the integral appearing in this expression as the output of the stochastic
sewing lemma applied to
\begin{equ}
A_{u,v} = \int_u^v(  h(x_u, Y_r^{x_u,\eps})- \bar h(x_u)) \, dB_r\;,
\end{equ}
where $Y_r^{\bar x,\eps}$ denotes the solution to \eqref{e:equationy-1}, but 
with the process $x$ replaced by the fixed value $\bar x$. In this way, the integrand is
$\CF_u$-measurable (for $\CF$ the filtration generated by $B$ and $\hat W$) and the integral
can be interpreted as a mixed Wiener / Young integral as before.

The hard part is to show that $\delta A$  satisfies the assumptions of the stochastic sewing lemma.
For this, we use the fact that we only need bounds on  $\E \bigl(\delta A_{s,u,t}\,|\,\CF_s\bigr)$
and that this quantity is much better behaved than $\delta A$ itself. 
Section~\ref{sec:semigroup} contains preliminary estimates on the Markov semigroup
generated by $Y_r^{\bar x,\eps}$ as well as some form of `non-autonomous Markov semigroup'
generated by \eqref{e:equationy-1}, while Section~\ref{sec:uniformBounds} then contains the uniform
bounds on the conditional expectation of $\delta A$.



\subsection*{Notation}
We gather here the most common notations. 
\begin{itemize}
\item $(\Omega, \CF,  \P)$ is a  probability space and $\|\fat\|_{p}$ denotes the norm in $L^p(\Omega)$. 
\item For $s \le t$ and $x_t$ a one-parameter process with values in $\R^d$, we set $\delta x_{s,t} \eqdef x_t - x_s$.
We also set $\|x\|_{\alpha,p} = \sup_{s,t} |t-s| ^{-\alpha}\|\delta x_{s,t}\|_{p}$.
\item For $s<u<t$ and $A$ a two-parameter stochastic process, we set 
\begin{equ}
\delta A_{sut} \eqdef A_{s,t} - A_{s,u} - A_{u,t}\;.
\end{equ}
We also set 
\begin{equ}
\|A\|_{\alpha,p} \eqdef \sup_{s < t} \f{ \| A_{s,t} \|_{p} }  {|t-s|^{\alpha}}\;,\qquad 
\$A\$_{\alpha,p} \eqdef \sup_{s < u < t} \f{ \|\E(\delta A_{sut}\,|\,\CF_s) \|_{p} }  {|t-s|^{\alpha}}\;.
\end{equ}
\item $H_\eta^p=\{ A_{s,t}\in L_p(\Omega, \CF_t,\P): \|A\|_{\eta,p}<\infty\}$.
\item $\CB_{\alpha, p}=\{ x_t \in\CF_t: \delta x_{s,t}\in H^p_\alpha\}$, $\CB_{\alpha, p} \subset L_p(\Omega,\CC^\gamma)$ (up to modification) for  $\gamma<\alpha -\f 1p$.
\item $\bar H_\eta^p=\{ A_{s,t}:  \$A\$_{\eta,p}<\infty\}$.

\item $W_t$ and $\hat W_t$ are two independent two-sided Wiener processes of dimension $m$ and $\hat m$ respectively. 
\item $\CG_t$ and $\hat \CG_t$ are the filtrations generated by the independent Wiener processes $W$ and $\hat W$ respectively and 
${\CF_t=\CG_t\vee \hat \CG_t}$.
\item $B_t$ (also denoted  by  $B_t^H$)  is a fractional Brownian motion of Hurst parameter $H$, which is related to $W_t$ via the Mandelbrot-Van Ness representation.
\item For $u<t$, $\bar B^u_{t} \eqdef \E \bigl(B_t-B_u\,|\, \CG_u\bigr)$ and $\tilde B^u_{t}\eqdef B_t-B_u-\bar B^u_{t} $. \\
Also $\bar B_t\eqdef \bar B_t^0$, $\tilde B_t=\tilde B_t^0$.
\item $f \lesssim  g$ means that $f\le Cg$ for a universal constant $C$.
\item $\CC_K^\infty$ denotes the space of smooth functions with compact support.
\item $\BC^k$  is the space of bounded $\CC^k$ functions with bounded derivatives of all orders up to $k$.
\item For $\alpha \in (0,1)$, $|x|_\alpha =\sup_{s\neq t} \f{| x_t-x_s|}{|t-s|^\alpha}$ is the homogeneous H\"older semi-norm. 
\item  $|\fat|_\infty $ and $|\fat|_\Lip$ denote the supremum norm and minimal Lipschitz constant respectively. 
\item $|f|_\Osc=\sup f-\inf f$.
\item For $h \in\CC(\R,\R^d)$, $\kappa\in (0,1)$,
$|h|_{-\kappa} = \sup_{s,t \le T} |t-s|^{\kappa-1} \bigl|\int_s^t h(r)\,dr\bigr|\;$.
\item For $f: \R\times \R^d \to \R$, $|f|_{-\kappa, \gamma}$ is the smallest possible choice of constant
$K$ with the property
\begin{equs}
\sup_x |f(\fat,x)|_{-\kappa} \le K\;,\quad 
\sup_{x\neq y} {|f(\fat,x) - f(\fat,y)|_{-\kappa} \over |x-y|^{\gamma}} \le K\;. 
\end{equs}
\item We write $\CB(\CY)$ for the Borel $\sigma$-algebra of a topological space $\CY$. 
For $s<u$, we set $$\CU_s^u=\{ F: \Omega\times \CY \to \R : \hbox { bounded $(\CF_s\vee \CG_u)\otimes \CB(\CY)$ measurable} \}.$$

\end{itemize}

\subsection*{Acknowledgements}
{\small
We would like to thank Massimiliano Gubinelli for an inspiring conversation.
MH gratefully acknowledges financial support from the Leverhulme trust via a Leadership Award
and the ERC via the consolidator grant 615897:CRITICAL.
}

\section{A deterministic residual bound}
\label{sec:residue}

We first state a bound on the difference between two solutions to a differential equation
driven by a Hölder continuous signal, given a bound on the corresponding residual. 
In the following, the reader may think of $b_t $ as a realisation of
the fractional Brownian motion $B_t$ or a realisation of $(B_t,t) \in \R^{m+1}$, but our statement
is purely deterministic.

A basic tool is the following estimate, the proof of which is elementary and follows for example
easily from \cite[Eq.~2.8]{Composition}.
\begin{lemma}\label{lem:Lip}
Assume that $F: \R^d \to \R$ has two bounded derivatives and let $\alpha \in (0,1)$.
Then, the composition operator  $x \mapsto F(x) = (t\mapsto F(x_t))$ satisfies the bound
\begin{equ}
|F(x)-F(y)|_\alpha \lesssim |F'|_\infty |x-y|_\alpha +  |F''|_\infty |x-y|_\infty (|x|_\alpha + |y|_\alpha)\;.
\end{equ}
\end{lemma}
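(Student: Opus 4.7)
The plan is to reduce the bound on the Hölder seminorm of the difference $F(x)-F(y)$ to a pointwise identity coming from a first-order Taylor-type representation.

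\medskip

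Fix $s<t$ and abbreviate $z^\theta_r = y_r + \theta(x_r - y_r)$ for $\theta \in [0,1]$. I would begin by writing the fundamental theorem of calculus along the affine segment between $y$ and $x$, so that
\begin{equ}
F(x_r) - F(y_r) = \int_0^1 F'(z^\theta_r)\,(x_r - y_r)\,d\theta\;,
\end{equ}
and then forming the discrete difference in $r$ between $s$ and $t$. The resulting integrand can be decomposed in the standard way as
\begin{equ}
F'(z^\theta_t)(x_t-y_t) - F'(z^\theta_s)(x_s-y_s) = F'(z^\theta_s)\bigl[(x_t-y_t)-(x_s-y_s)\bigr] + \bigl[F'(z^\theta_t)-F'(z^\theta_s)\bigr](x_t - y_t)\;.
\end{equ}

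\medskip

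For the first piece, I would bound $|F'(z^\theta_s)| \le |F'|_\infty$ and observe that $(x_t-y_t)-(x_s-y_s)$ is exactly the Hölder-type increment of $x-y$, so that this term contributes at most $|F'|_\infty\,|x-y|_\alpha\,|t-s|^\alpha$. For the second piece, the mean value theorem applied to $F'$ gives $|F'(z^\theta_t) - F'(z^\theta_s)| \le |F''|_\infty |z^\theta_t - z^\theta_s|$, and the triangle inequality bounds the latter by $(1-\theta)|y_t-y_s| + \theta|x_t-x_s| \le (|x|_\alpha + |y|_\alpha)|t-s|^\alpha$, while the factor $|x_t - y_t|$ is controlled by $|x-y|_\infty$. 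Integrating in $\theta$ and dividing by $|t-s|^\alpha$ then yields the stated estimate.

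\medskip

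There is essentially no obstacle here: the whole argument is a three-line Taylor expansion combined with two applications of the triangle inequality. The only mild care needed is to split the increment so that the Hölder regularity of $x-y$ is paid only once (via $|F'|_\infty$) while the loss of a derivative in $F$ is paid only against the sup-norm of $x-y$, which is what produces the two terms on the right-hand side with the correct norms.
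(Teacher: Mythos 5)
Your argument is correct and complete: the Taylor representation along the segment $z^\theta_r = y_r + \theta(x_r-y_r)$, followed by the standard splitting of the increment so that the H\"older seminorm of $x-y$ is paired with $|F'|_\infty$ and the sup-norm of $x-y$ with $|F''|_\infty$, gives exactly the claimed bound. The paper does not write out a proof (it merely notes the statement is elementary and cites a reference), and your derivation is precisely the standard argument being alluded to there.
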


The announced bound goes as follows.

\begin{lemma}\label{general-convergence}
Let $F \in \BC^2$, let $b\in \CC^\beta$ for some $\beta > {1\over 2}$ and
let $Z, \bar Z \in \CC^\alpha$ for some $\alpha \in (0,\beta]$ such that $\alpha + \beta > 1$. Let $z, \bar z$ be the solutions to
\begin{equ}
z_t = Z_t + \int_0^t F(z_s)\,db_s \;,\qquad 
\bar z_t = \bar Z_t + \int_0^t F(\bar z_s)\,db_s \;.
\end{equ}
Then, there exists a constant $C$ depending only on $F$ such that,
on the time interval $[0,1]$, one has the bound
\begin{equ}
|z-\bar z|_{\alpha} \le C\exp\left(C | b|_\beta^{1/\beta} + C |Z|_\alpha^{1/\alpha} + C |\bar Z|_\alpha^{1/\alpha}\right) |Z-\bar Z|_{\alpha}\;.
\end{equ}
\end{lemma}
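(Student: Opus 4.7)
My plan is a Gronwall-type iteration over short subintervals of $[0,1]$. The main analytic tool is the standard Young integral estimate
\[
\Big|\int_s^t \phi_r\,db_r - \phi_s(b_t - b_s)\Big| \le C|b|_\beta |\phi|_{\alpha}(t-s)^{\alpha+\beta},
\]
which I will combine with Lemma~\ref{lem:Lip} to control $|F(z)-F(\bar z)|_\alpha$.

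\emph{A priori bound on $|z|_\alpha,|\bar z|_\alpha$.} Applied to $\phi=F(z)$, and using $|F(z)|_\alpha\le|F'|_\infty|z|_\alpha$, the Young bound on a subinterval $[s_0,s_0+\tau]\subseteq[0,1]$ gives
\[
|z|_{\alpha;[s_0,s_0+\tau]} \le |Z|_\alpha + |F|_\infty|b|_\beta + C|F'|_\infty|b|_\beta\tau^\beta\,|z|_{\alpha;[s_0,s_0+\tau]}.
\]
Choosing $\tau^\beta \le 1/(2C|F'|_\infty|b|_\beta)$ absorbs the last term; concatenating over $N\sim|b|_\beta^{1/\beta}$ subintervals yields a polynomial bound
\[
M := |z|_\alpha^{[0,1]}+|\bar z|_\alpha^{[0,1]} \lesssim (|Z|_\alpha + |\bar Z|_\alpha + |b|_\beta)\,|b|_\beta^{(1-\alpha)/\beta},
\]
which is comfortably dominated by the claimed exponential factor.

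\emph{Local bound on $w:=z-\bar z$ and iteration.} Setting $W:=Z-\bar Z$, we have $\delta w_{s,t} = \delta W_{s,t} + \int_s^t(F(z_r)-F(\bar z_r))\,db_r$. Applying the Young bound together with Lemma~\ref{lem:Lip} on $[s_0,s_0+\tau]$ (and writing $|w|_\infty \le |w_{s_0}| + \tau^\alpha|w|_\alpha$ there) yields
\[
|w|_{\alpha;[s_0,s_0+\tau]} \le |W|_\alpha + |F'|_\infty|b|_\beta\tau^{\beta-\alpha}|w_{s_0}| + C|b|_\beta\tau^\beta\bigl(|F'|_\infty|w|_{\alpha;[s_0,s_0+\tau]} + |F''|_\infty M(|w_{s_0}|+\tau^\alpha|w|_{\alpha;[s_0,s_0+\tau]})\bigr).
\]
Choosing $\tau$ small enough (now depending on both $|b|_\beta$ and $M$) to absorb the self-references in $|w|_\alpha$ produces a local estimate $|w|_{\alpha;[s_0,s_0+\tau]} \le 2|W|_\alpha + A|w_{s_0}|$, with $A$ polynomial in $|b|_\beta$ and $M$. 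This in turn gives $|w_{s_0+\tau}| \le (1+A\tau^\alpha)|w_{s_0}| + 2\tau^\alpha|W|_\alpha$, so iterating over the $N\sim 1/\tau$ subintervals of $[0,1]$ and using $w_0 = Z_0-\bar Z_0 = 0$ (which is implicit in a seminorm bound of this form) gives $\sup_s |w_s| \lesssim N\tau^\alpha\exp(AN\tau^\alpha)|W|_\alpha$. Feeding this back into the local Hölder estimate and concatenating over the $N$ subintervals as in the first step delivers the global bound of the claimed form.

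\emph{Main obstacle.} The proof is essentially routine, with the only delicate point being the choice of $\tau$ in the second step: it must satisfy both $C|b|_\beta\tau^\beta\lesssim 1$ and $C|b|_\beta M\tau^{\alpha+\beta}\lesssim 1$ so as to simultaneously absorb the Young self-reference and the Lemma~\ref{lem:Lip} remainder. This forces $N=1/\tau$ to grow polynomially in $M$ and hence in $|Z|_\alpha,|\bar Z|_\alpha$, so the main technical bookkeeping is to verify that the resulting polynomial losses---arising both from the iteration and from the passage between local and global Hölder seminorms---are dominated by the stated exponent $C|b|_\beta^{1/\beta}+C|Z|_\alpha^{1/\alpha}+C|\bar Z|_\alpha^{1/\alpha}$, which happily absorbs any polynomial.
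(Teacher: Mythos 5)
Your overall architecture is the same as the paper's: an a priori Hölder bound on $z,\bar z$ obtained from Young's estimate by absorption on short intervals, a local estimate for $w=z-\bar z$ via Lemma~\ref{lem:Lip}, and a discrete Gronwall iteration over $N=1/\tau$ subintervals, with the exponential factor coming from $N$. The gap is in the very last step, which you dismiss as bookkeeping: the claim that the stated exponent ``happily absorbs any polynomial'' is false. The exponent $C|b|_\beta^{1/\beta}$ only dominates powers of $|b|_\beta$ of degree at most $1/\beta<2$. In your scheme, $\tau$ must satisfy $C|b|_\beta M\tau^{\alpha+\beta}\lesssim 1$ where $M$ is the \emph{global} Hölder norm of $z,\bar z$ on $[0,1]$; your first step gives $M\lesssim N_0^{1-\alpha}\bigl(|Z|_\alpha+|\bar Z|_\alpha+|b|_\beta\bigr)$ with $N_0\sim|b|_\beta^{1/\beta}$ (the factor $N_0^{1-\alpha}$ being the unavoidable loss when concatenating Hölder seminorms). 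Hence $N=1/\tau\gtrsim(|b|_\beta M)^{1/(\alpha+\beta)}$ contains $|b|_\beta$ to the power $(2\beta+1-\alpha)/(\beta(\alpha+\beta))$, which exceeds $1/\beta$ exactly when $2\alpha<1+\beta$ --- always the case here since $\alpha\le\beta<1$. Your argument therefore proves the bound with $\exp\bigl(C|b|_\beta^{\gamma}\bigr)$ for some $\gamma>1/\beta$, which is strictly weaker than the statement.

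The repair is to feed the \emph{local} Hölder norms into the Lemma~\ref{lem:Lip} remainder: on $[s_0,s_0+\tau]$ the composition estimate only sees $|z|_{\alpha;[s_0,s_0+\tau]}+|\bar z|_{\alpha;[s_0,s_0+\tau]}\lesssim |Z|_\alpha+|\bar Z|_\alpha+|b|_\beta\tau^{\beta-\alpha}$, so the absorption condition becomes $\tau^{\alpha+\beta}|b|_\beta\bigl(|Z|_\alpha+|\bar Z|_\alpha\bigr)+\tau^{2\beta}|b|_\beta^2\lesssim 1$, which follows by simply multiplying the three constraints $\tau^\beta|b|_\beta\le c$, $\tau^\alpha|Z|_\alpha\le1$, $\tau^\alpha|\bar Z|_\alpha\le1$; these allow $1/\tau\lesssim|b|_\beta^{1/\beta}+|Z|_\alpha^{1/\alpha}+|\bar Z|_\alpha^{1/\alpha}$, which is the stated exponent. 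This is precisely the paper's device: the conditions \eqref{e:boundT} guarantee that the quantity $L=|F|_\Lip+T^\alpha|F''|_\infty(|z|_\alpha+|\bar z|_\alpha)$, computed with local norms, is of order one. Apart from this, your proof matches the paper's step for step (the paper packages the $|w_{s_0}|$ bookkeeping into the norm $|w|_\infty+T^\alpha|w|_\alpha$ rather than propagating the endpoint value explicitly, which is equivalent), and the point you raise about needing $Z_0=\bar Z_0$ for a homogeneous-seminorm conclusion is a genuine implicit assumption shared by the paper's own proof and satisfied in all its applications.
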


\begin{proof}
Since, by Lemma~\ref{lem:Lip}, we have the bound
\begin{equ}[e:boundDiffF]
|F(z)-F(\bar z)|_\alpha \lesssim  |z-\bar z|_\alpha |F'|_\infty +  |F''|_\infty |z-\bar z|_\infty (|z|_\alpha + |\bar z|_\alpha)\;,
\end{equ}
we conclude that on $[0,T]$ with $T \le 1$ one has
\begin{equs}
|z-\bar z|_\infty &\lesssim \big(T^{\beta}L |z-\bar z|_\infty + |F'|_\infty T^{\alpha + \beta} |z-\bar z|_\alpha \big) |b|_\beta
+ |Z-\bar Z|_\infty\;,\\
|z-\bar z|_\alpha &\lesssim  \big(T^{\beta-\alpha }L |z-\bar z|_\infty +  |F'|_\infty T^{\beta} |z-\bar z|_\alpha  \big) |b|_\beta
+ |Z-\bar Z|_\alpha\;,
\end{equs}
where $L=  |F|_\Lip+ T^\alpha |F''|_\infty (|z|_\alpha + |\bar z|_\alpha)$.
The two inequalities are proved similarly, we demonstrate with the second one. By \eqref{e:Young} in \S~\ref{section-no-feedback},
we obtain on $[0,T]$ the bound
\begin{equs}
|z-\bar z|_\alpha &\leq
   \Big| \int_0^{\fat} (F(z_s)-F(\bar z_s)) \,db_s\Big|_\alpha+| Z-\bar Z|_\alpha\\
 &  \lesssim |F(z)-F(\bar z)|_\alpha T^{\beta} |b|_\beta +|F(z)-F(\bar z)|_\infty T^{\beta-\alpha} |b|_\beta+ | Z-\bar Z|_\alpha\;,
\end{equs}
and the requested bound then follows from \eqref{e:boundDiffF}.
In a similar way, using the fact that $|F(z)|_\alpha \le |F'|_\infty |z|_\alpha$, we obtain the a priori bound
\begin{equ}
|z|_\alpha \lesssim |Z|_\alpha + T^{\beta-\alpha} |b|_\beta + T^\beta |b|_\beta |z|_\alpha\;,
\end{equ}
and similarly for $\bar z$. Provided that we choose $T$ in such a way that
\begin{equ}[e:boundT]
T^\beta  |b|_\beta \le c\;,\qquad T^\alpha |Z|_\alpha \le 1\;,\qquad T^\alpha |\bar Z|_\alpha \le 1\;,
\end{equ}
for some sufficiently small constant $c$ that only depends on $F$, we thus obtain the 
bound $|z|_\alpha \lesssim  |Z|_\alpha + T^{\beta-\alpha} |b|_\beta$, and similarly for $|\bar z|_\alpha$. 
In particular, this
shows that for $T$ as in \eqref{e:boundT} one has $L \lesssim 1$.
  
This then suggests the introduction of the norm
\begin{equ}
|z|_{\alpha,T} = |z|_\infty + T^\alpha |z|_\alpha\;, 
\end{equ}
with suprema taken over $[0,T]$, for which we obtain the bound
\begin{equ}
|z-\bar z|_{\alpha,T} \lesssim  T^{\beta} |z-\bar z|_{\alpha,T} |b|_\beta
+ |Z-\bar Z|_{\alpha,T}\;,
\end{equ}
thus yielding
\begin{equ}
|z-\bar z|_{\alpha,T} \le 2|Z-\bar Z|_{\alpha,T}\;,
\end{equ}
on $[0, T]$ where $T$ is as in \eqref{e:boundT}. Iterating this bound, we conclude that on any sub-interval
$[s,s+T]$ of $[0,1]$ one has a bound of the type
\begin{equ}
|(z-\bar z) \restr [s,s+T]|_{\alpha,T} \le 2\exp(C (1+s/T)) |Z-\bar Z|_{\alpha,T}\;,
\end{equ}
whence we conclude that on $[0,1]$,  for a possibly larger constant $C$, one has 
\begin{equ}
|z-\bar z|_{\alpha} \lesssim \exp(C(1+T^{-1})) |Z-\bar Z|_{\alpha}\;.
\end{equ}
Since \eqref{e:boundT} allows us to choose $T$ such that $1/T \lesssim |b|_\beta^{1/\beta} + |Z|_\alpha^{1/\alpha} + |\bar Z|_\alpha^{1/\alpha}$,
the claim follows.
\end{proof}

\section{Averaging without feedback}
\label{section-no-feedback}

In this section we provide an interpretation of the integral against fractional Brownian motion
that is more stable than the Young integral in situations in which the integrand exhibits
fast oscillations. The idea is to exploit the adaptedness of the integrand in a way that allows
us to apply the stochastic version of the sewing lemma \cite{Max} recently obtained in  \cite{Khoa}.

To take one step back, we recall that integration of a deterministic function with respect to a 
fractional Brownian motion (fBm) of Hurst parameter $H>\f 12$ is
called a Wiener integral (with respect to Gaussian processes): the integrands are smooth stochastic processes 
completed with the norm given by the inner product 
\begin{equ}
\< \phi, \psi \> =\E \Big(\int_\R \phi_s dB_s \int_\R \psi_s dB_s\Big)\;.
\end{equ}
(Limits of smooth functions 
with respect to this norm can be Schwartz distributions.)
When the integrand is sufficiently smooth, this is just the Young integral.

Let $B_t$ be an $m$-dimensional fractional Brownian motion with Hurst parameter $H\in (\f 12, 1)$, 
it has an integral representation with respect to a two sided standard Wiener process $W_t$,
which was introduced by Mandelbrot and Van Ness \cite{FBM}. We consider $H$ as being fixed throughout this article 
and therefore omit the superscript.
For $r > u$,  write the increment of fractional Brownian motion as a sum of two processes:
\begin{equs}
B_r-B_u
&= \int_{-\infty}^u \Big( (r-v)^{H-{1\over2}} - (u-v)^{H-{1\over 2}} \Big)\,dW_{v}
+ \int_u^r (r-v)^{H-{1\over2}}\,dW_{v} \\
&\eqdef \bar B^u_{r} + \tilde B^u_{r}\;.\label{e:decompB}
\end{equs}
Writing $\CG_t$ for the filtration generated by the increments of $W$,
$\bar B^u_{t}$ is $\CG_u$-measurable and smooth in $t$ on $(u, \infty)$, while 
$\tilde B^u_{t}$ is independent of $\CG_u$. For the special case
$u=0$, we simply write $ \bar B_{t}= \bar B^0_{t}$,  $\tilde B_t=\tilde B^0_{t}$.  
Recall also that the filtration  $\CG_t$ coincides with that  generated  by the increments 
of $B$.


\subsection{Mixed Riemann and Wiener integrals}
\label{sec:basicEstimate}

If  $f \colon \R \times \R^d \to L(\R^m, \R^d)$  is a measurable function and  $x_t$ a $\CG_t$-adapted stochastic process,
our first task is to define $\int_0^t f(r, x_r) \, dB_r$ as the limit of `Riemann sums' of the type
$\sum_i \int_{s_i}^{s_{i+1}} f(r, x_{s_i})\,dB_r$, provided that $f$ and $x$ satisfy suitable assumptions. 
Prior to justifying  its convergence we
explain  how each individual integration  in the sum is defined.  For any $s<t$, set
\begin{equ}
A_{s,t} \eqdef  \int_s^t f(r, x_s)\,dB_r \eqdef  \int_s^t f(r,x_s)\,d\bar B^s_r+\int_s^t f(r,x_s)\, d \tilde B^s_{r}\;.
\end{equ}
 The first integral will be considered as a Riemann--Stieltjes integral which will exploit the fact that 
 $\bar B^s$ is a smooth function with a well-behaved singularity at time $s$.
The second term will be interpreted as a Wiener integral with respect to the Gaussian process $\tilde B^s$,
which we can do since $x_s$ is $\CG_s$-measurable and therefore independent of it. 
Since $r\mapsto \bar B_r^u$ is smooth for $r > u$ and its derivative has an integrable singularity at $r \sim u$,
the Riemann integral $\int_u^t f(r,x_u)\,d\bar B_r^u$  can be defined in a pathwise sense as soon as $f$ is
continuous in both of its arguments.
 If $x_{\fat}$ has continuous sample paths, then the same is true for the Wiener integral since 
 the map $F \mapsto \int_u^t F_r\,d\tilde B^u_r$,
viewed as a linear map from $\CC^\infty$ into $L^2(\Omega)$, can be extended to all $F \in \CC^0$
(and actually even to $F \in \CC^{-\kappa}$ for $\kappa$ small enough, see Lemmas~\ref{lemma-covariance} and~\ref{lemma-kernel} below).
Think now of $u$ as being fixed and consider an arbitrary stochastic process $F$ on $[u,t]$,
but we think of the case $F_r = f(r,x_u)$.

\begin{remark}
If $F$ is either deterministic and H\"older continuous of order $\alpha$ or $F\in \CB_{\alpha, p}$
where $p>2$ and $\alpha+H>1$, then the mixed integral coincides with the Young integral. 
The first follows from the deterministic 
sewing lemma and that $\int_u^t (F_r-F_u)\,  d\tilde B_r \lesssim |t-u|^{\alpha+H}$. The second follows from 
the stochastic sewing lemma, alternatively this is a special case of  Lemma~\ref{lem:sewing=Young} below.
\end{remark}

In situations where $f$ and $x$ are sufficiently regular so that 
the usual Riemann sums converge, we will see in Lemmas~\ref{lem:sewing=Young} and~\ref{prop:equalA} 
that the notion of integration used
here coincides with the classical Young integral. The advantage of this set-up however
is that we can exploit the stochastic cancellations of the Wiener integral through the
use of the stochastic sewing lemma, which allows us to substantially expand the class of admissible
integrands and is fundamental for extracting uniform estimates for SDEs with random inputs.

We begin with building up estimates for the mixed stochastic integral explained earlier.
Let  $R$ denote the covariance function of $\tilde B$.  We work componentwise, so that instead 
of complicating our notation
with i.i.d.\ copies of the one dimensional fBm's, we may  assume that 
$\tilde B$ is one dimensional in the formulation below. 
It follows from the scaling properties of $\tilde B$
that 
\begin{equs}
R(r,s) &=\E \tilde B_r \tilde  B_s= (r\wedge s)^{2H} \hat R \Big({|r-s| \over r\wedge s}\Big)\;,\\
\hat R(t) &=\E \tilde B_1 \tilde B_{1+t} = \int_0^1 (1-s)^{H-{1\over 2}}(1+t-s)^{H-{1\over 2}}\,ds\;, \label{e:defFR}
\end{equs}
so that their distributional derivatives $\d^2_{r,s} R(r,s)\eqdef \f{\partial^2} {\partial r \partial s}  R(r,s)$ 
satisfy
 \begin{equs}
\d^2_{r,s} R(r,s) &= (r\wedge s)^{2H-2} G\Big({|r-s| \over r\wedge s}\Big)\;, \label{e:relRG}\\
G(t) &= (2H-1) \hat R'(t) - (t+1) \hat R''(t)\;.
\end{equs}

\noindent{\bf Convention.}
We now fix a filtration $\CF_s$ with $\CG_s \subset \CF_s$ and such that, for every~$s$,
$\tilde B^s$ is independent of $\CF_s$. The example to have in mind which will be relevant in Section~\ref{sec:averaging} 
is to take $\CF_s = \CG_s \vee \hat \CG_s$,
where $\hat \CG$ is the filtration generated by the increments of a Wiener process independent of
$B$.

Recall that Wiener integrals are centred Gaussian processes. In our case, $F_s$ is random but  with $F_s\in \CF_u$ for any $s\in [u,t]$,
so that $\int_u^t  F_s d\tilde B_s^u$ 
is a centred Gaussian process, conditional on $\CF_u$.
\begin{lemma}\label{lemma-covariance}
$G(t) \approx t^{2H-2}$ for $t \ll 1$ and
$G(t) \approx t^{H-{3\over 2}}$ for $t \gg 1$.
In particular, $\d^2_{r,s} R(r,s)$ is integrable over any bounded region and there exists  $c_1 \in \R$ s.t.
\begin{equ}
 \int_0^t \int_0^t |\d^2_{r,s} R(r,s)|\,dr\,ds \le c_1 t^{2H}\;,
\end{equ}
for every  $t\in \R_+$.
For some fixed $u \ge 0$, let $F_s$ be pathwise smooth in $s$ and suppose  $F_s$ is $\CF_u$-measurable for any $s\in [u,t]$, 
then the following It\^o isometry holds, 
\begin{equ}[e:RKHS]
 \E \left( \Big(\int_u^t F_s\,d\tilde B_s^u\Big)^2 \, \Big| \, \CF_u\right)(\omega)
= \int_u^t \int_u^t \d^2_{r,s} R(r,s)
F_r(\omega)F_s(\omega) \,dr \,ds\,.
\end{equ}
\end{lemma}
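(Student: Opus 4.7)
The statement has three ingredients: the two-sided asymptotics of $G$ at $0$ and $\infty$, the integrability of $\partial^2_{r,s}R$ on $[0,t]^2$ with its $t^{2H}$ scaling, and the conditional It\^o isometry. My plan is to compute $G$ explicitly, read off the asymptotics, deduce the integral bound by self-similarity, and finally establish the isometry by conditioning.

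For the asymptotics of $G$, I would start from the Mandelbrot--Van Ness formula $\tilde B^u_r = \int_u^r (r-v)^{H-1/2}\,dW_v$, which via the standard Wiener isometry gives $R(r,s) = \int_0^{r\wedge s}(r-v)^{H-1/2}(s-v)^{H-1/2}\,dv$ (taking $u=0$). For $0<r<s$ I differentiate under the integral; the boundary contribution at $v=r$ vanishes since $H>1/2$, leaving $\partial_r\partial_s R(r,s) = (H-\tfrac12)^2\int_0^r (r-v)^{H-3/2}(s-v)^{H-3/2}\,dv$. The substitution $v = r(1-u)$ pulls out $r^{2H-2}$ and produces the closed form $G(\tau) = (H-\tfrac12)^2\int_0^1 u^{H-3/2}(\tau+u)^{H-3/2}\,du$ with $\tau = (s-r)/(r\wedge s)$. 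To extract the small-$\tau$ behaviour I rescale $u = \tau v$, obtaining $G(\tau) = (H-\tfrac12)^2\tau^{2H-2}\int_0^{1/\tau} v^{H-3/2}(1+v)^{H-3/2}\,dv$; the integrand is integrable at $0$ (because $H>1/2$) and at $\infty$ (because $H<1$), so the integral converges to a strictly positive finite constant as $\tau\to 0$, yielding $G(\tau)\approx \tau^{2H-2}$. For large $\tau$ I factor $(\tau+u)^{H-3/2} = \tau^{H-3/2}(1+u/\tau)^{H-3/2}$; since $u\in[0,1]$ the last factor tends uniformly to $1$, so $G(\tau)\sim (H-\tfrac12)\tau^{H-3/2}$.

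For the integral bound I would use the self-similarity of $\tilde B$: $R(\lambda r,\lambda s) = \lambda^{2H}R(r,s)$ implies $\partial^2_{r,s}R(\lambda r,\lambda s) = \lambda^{2H-2}\partial^2_{r,s}R(r,s)$, so the substitution $(r,s)\mapsto(tr,ts)$ reduces the claim to finiteness of $\int_0^1\int_0^1 |\partial^2_{r,s}R|\,dr\,ds$. Combining the scaling identity $\partial^2_{r,s}R = (r\wedge s)^{2H-2}G(|r-s|/(r\wedge s))$ with the asymptotics just established, the integrand is $\lesssim |r-s|^{2H-2}$ near the diagonal (integrable because $H>1/2$) and $\lesssim s^{H-1/2}r^{H-3/2}$ in the corner where $r\wedge s \ll |r-s|$ (integrable in $s$ near the axis). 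A short case analysis over $[0,1]^2$ closes this step and identifies the constant $c_1$.

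For the conditional isometry, the key input is the independence of $\tilde B^u$ from $\CF_u$ fixed in the preceding convention. Conditionally on $\CF_u$, the process $\tilde B^u$ retains its centred Gaussian law with covariance $R$, while $F$ becomes a deterministic path. For step-function integrands $F = \sum_i F_{s_i}\1_{(s_i,s_{i+1}]}$ with $F_{s_i}\in\CF_u$, a direct computation gives
\[
\E\Bigl[\Bigl(\sum_i F_{s_i}(\tilde B^u_{s_{i+1}}-\tilde B^u_{s_i})\Bigr)^2\,\Big|\,\CF_u\Bigr](\omega) = \sum_{i,j} F_{s_i}(\omega)F_{s_j}(\omega)\int_{s_i}^{s_{i+1}}\int_{s_j}^{s_{j+1}}\partial^2_{r,s}R\,dr\,ds,
\]
which is precisely the Riemann-sum approximation to the right-hand side of the claimed identity. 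Approximating a smooth $F$ by such step functions and passing to the limit, using the integrability bound from the previous step to dominate the Riemann sums, yields \eqref{e:RKHS}. The main obstacle is this conditioning step: one must rigorously identify the conditional law of $\int_u^t F_s\,d\tilde B^u_s$ given $\CF_u$ with that of a deterministic Wiener integral against $\tilde B^u$, so that the Gaussian calculation applies pathwise in $\omega$. Once that identification is in place from the independence of $\tilde B^u$ and $\CF_u$, both the isometry and the limiting argument are routine.
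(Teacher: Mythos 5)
Your proposal is correct, and all three claims are established, but the way you obtain the key representation of $G$ differs from the paper's. The paper first proves Lemma~\ref{lem:R}, deriving the three-term formula $\d^2_{r,s}R = c_1 r^{H-\frac12}s^{H-\frac32} + c_2(s-r)^{2H-2} + c_3\int_r^\infty v^{H-\frac12}(s-r+v)^{H-\frac52}\,dv$ by working with $\hat R$, $\hat R'$, $\hat R''$ and splitting $\int_0^r=\int_0^\infty-\int_r^\infty$ after an integration by parts; the two asymptotic regimes of $G$ are then read off from the competition between these three terms. You instead differentiate the Wiener-isometry representation $R(r,s)=\int_0^{r\wedge s}(r-v)^{H-\frac12}(s-v)^{H-\frac12}\,dv$ directly and land on the single compact expression $G(\tau)=(H-\tfrac12)^2\int_0^1 u^{H-\frac32}(\tau+u)^{H-\frac32}\,du$, from which both asymptotics follow by elementary rescaling; one integration by parts shows this is literally the same function as the paper's (with the same constants $c_1,c_2,c_3$), but your form is more economical, makes the positivity of $\d^2_{r,s}R$ manifest, and avoids the intermediate bookkeeping of Lemma~\ref{lem:R}. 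The scaling argument for the $t^{2H}$ bound and the near-diagonal\slash far-from-diagonal case analysis match what the paper's representation yields. For the conditional isometry the paper simply cites the classical result and observes that the integrability of the weak derivative upgrades the distributional identity to a Lebesgue integral; your step-function argument is the standard proof of that classical fact (note that for step integrands the identity is exact, not an approximation, once $\d^2_{r,s}R$ is known to be the integrable weak derivative of $R$ — the only limit taken is in $F$), so nothing is missing, though the freezing of $\omega$ under the conditional expectation does deserve the one line you flag, justified exactly by the independence of $\tilde B^u$ from $\CF_u$.
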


\begin{proof}
The bound on $\d^2_{r,s} R(r,s)$ follows at once from the representation given in 
Lemma~\ref{lem:R} below, while the fact that \eqref{e:RKHS} holds in the distributional sense is
classical, see for example \cite{WienerInt}. The bounds on $R$ given in Lemma~\ref{lem:R} 
guarantee that the distributional derivative of $R$ coincides with its weak derivative and
is an integrable function, so that \eqref{e:RKHS} also holds with the right hand side interpreted
as a Lebesgue integral.
\end{proof}

As usual \cite{WienerInt,Bogachev}, the left hand side of \eqref{e:RKHS} can be defined in
such a way that this isometry 
extends to all $g$ taking values in the completion of the 
space of smooth functions under the norm given by the right hand side of \eqref{e:RKHS}. 
This in particular contains all $g \in L^2$, as can be shown similarly to \cite{Nualart}. 
It turns out however that the space of admissible integrands for this Wiener integral contains
not only functions, but also distributions of order $-\kappa$ provided that $\kappa < H-{1\over 2}$.
More precisely, we have the following result, a proof of which is 
postponed to the appendix.

\begin{lemma}\label{lemma-kernel}
Let $h$ be a continuous function. Then, for all $T>0$ and $\kappa \in [0,H-{1\over 2})$,
one has the bound
\begin{equ}[e:RKHScond]
|h|_{\RKHS}^2 \eqdef \Bigl| \int_0^T \int_0^T \d^2_{r,s} R(r,s)\, h(r)h(s)\,dr\,ds\Bigr| \lesssim T^{2H-2\kappa } |h|_{-\kappa}^2\;,
\end{equ}
where the negative H\"older norm $ |h|_{-\kappa}$ on $[0,T]$ is given by
\begin{equ}
|h|_{-\kappa} = \sup_{0\le s,t \le T} |t-s|^{\kappa-1} \Bigl|\int_s^t h(r)\,dr\Bigr|\;.
\end{equ}
\end{lemma}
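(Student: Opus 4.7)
The plan is to express the RKHS norm as the $L^2$ norm of an explicit kernel applied to $h$, and then bound this kernel pointwise using the hypothesis on $|h|_{-\kappa}$. Starting from the Mandelbrot--Van Ness representation $\tilde B_r = \int_0^r(r-v)^{H-1/2}\,dW_v$ and the It\^o isometry, one has $R(r,s) = \int_0^{r\wedge s}(r-v)^{H-1/2}(s-v)^{H-1/2}\,dv$. Differentiating under the integral sign in both variables (the boundary contributions at the upper limit vanish because $(r-v)^{H-1/2}$ is zero at $v = r$ when $H > 1/2$) yields
\[
\partial^2_{r,s} R(r,s) = \bigl(H - \tfrac12\bigr)^2 \int_0^{r\wedge s} (r-v)^{H-3/2}(s-v)^{H-3/2}\,dv.
\]
Plugging this into the double integral defining $|h|_\RKHS^2$ and swapping the order of integration via Fubini produces
\[
|h|_\RKHS^2 = \bigl(H - \tfrac12\bigr)^2 \int_0^T \phi(v)^2\,dv, \qquad \phi(v) \eqdef \int_v^T h(r)\,(r-v)^{H-3/2}\,dr.
\]
(Equivalently, one arrives at the same formula by rewriting the Wiener integral $\int_0^T h\,d\tilde B$ as an It\^o integral against $W$ via stochastic Fubini and applying the standard It\^o isometry on $W$.)

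The key step is then a pointwise bound on $\phi(v)$. Fix $v \in [0,T)$ and set $\tilde H(r) \eqdef \int_v^r h(u)\,du$, so that $|\tilde H(r)| \le |h|_{-\kappa}(r-v)^{1-\kappa}$ by definition of $|\cdot|_{-\kappa}$. Since $h$ is continuous, $\tilde H$ is continuously differentiable and a classical integration by parts gives
\[
\phi(v) = (T-v)^{H-3/2}\tilde H(T) + \bigl(\tfrac32 - H\bigr)\int_v^T \tilde H(r)(r-v)^{H-5/2}\,dr,
\]
where the boundary contribution at $r \to v^+$ vanishes thanks to $|(r-v)^{H-3/2}\tilde H(r)| \lesssim (r-v)^{H-1/2-\kappa} \to 0$, which uses precisely $\kappa < H - 1/2$. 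Inserting the bound on $\tilde H$ into both terms and using $\int_v^T (r-v)^{H-3/2-\kappa}\,dr \lesssim (T-v)^{H-1/2-\kappa}$ (the integral converges for the same reason) delivers
\[
|\phi(v)| \lesssim |h|_{-\kappa}\,(T-v)^{H-1/2-\kappa}.
\]

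Squaring and integrating in $v$ then yields $\int_0^T \phi(v)^2\,dv \lesssim T^{2H - 2\kappa}|h|_{-\kappa}^2$, which is the claimed bound once the constant $(H - 1/2)^2$ is absorbed. The obstacles are purely technical: rigorous differentiation under the integral in the formula for $\partial^2_{r,s}R$, and justifying the integration by parts in the presence of the integrable singularity of $(r-v)^{H-3/2}$ at $r = v$. Both are controlled by the hypothesis $\kappa < H - 1/2$, which is exactly the threshold beyond which two of the intermediate integrals cease to converge, so the condition is sharp for this line of argument.
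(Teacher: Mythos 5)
Your proof is correct, and it reaches the key estimate by a somewhat different route than the paper. The paper first dominates the conditioned covariance by the unconditioned one (``a conditional variance is smaller than the full variance''), reducing the problem to the stationary kernel $|r-s|^{2H-2}$, and then factors that kernel through the Beta-type identity $(s-r)^{2H-2}=c\int_r^s(s-u)^{H-3/2}(u-r)^{H-3/2}\,du$. You instead compute $\partial^2_{r,s}R$ exactly from the Volterra representation $\tilde B_r=\int_0^r(r-v)^{H-1/2}\,dW_v$, obtaining the factorization $\partial^2_{r,s}R(r,s)=\cst{1}^2\int_0^{r\wedge s}(r-v)^{H-3/2}(s-v)^{H-3/2}\,dv$ directly, so that $|h|_{\RKHS}^2=\cst{1}^2\int_0^T\phi(v)^2\,dv$ is an identity rather than an upper bound --- equivalently, the ordinary It\^o isometry against $W$. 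From that point on the two arguments coincide: the heart of both is the fractional integration by parts bounding $\bigl|\int h(r)\,(r-v)^{H-3/2}\,dr\bigr|$ by $|h|_{-\kappa}$ times a power, which is exactly where $\kappa<H-\frac12$ enters, and both finish with a convergent power integral. What your version buys is transparency (the quantity is manifestly nonnegative, and no appeal to the comparison of variances or to the representation of $G$ is needed); what the paper's version buys is that it sidesteps justifying the differentiation under the integral sign and reuses the stationary kernel, for which the factorization is a classical identity. Your handling of the two technical points --- the vanishing boundary term at $v=r$ when differentiating under the integral (since $H>\frac12$), and the regularized integration by parts with the boundary term at $r\to v^+$ controlled by $(r-v)^{H-1/2-\kappa}$ --- is sound.
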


Since $\bar B_t$ is smooth in $t$, integrals with respect to it extend to rougher integrands, 
as we will show now.   Below we provide a bound for integration with respect to the full fBm.
\begin{lemma}
\label{lemma-a}
Let $B$ be a fBm with $H > {1\over 2}$ and fix 
$0 \le \kappa < H-{1\over 2}$. Let $s\ge 0$ be fixed. 
Let $r \mapsto F_r$ be smooth, with each  $F_r$ for $s \le r$ measurable with respect to $\CF_s$.
Then, for $t \ge s$ with $|t-s| \le 1$ and $2 \le p < q$ one has the bound
\begin{equ}
\Big\|\int_s^t F_r\,dB_r\Big\|_p \lesssim \| | F|_{-\kappa}\|_q \,|t-s|^{H-\kappa}\;,
\end{equ}
where  $|F|_{-\kappa}$ denotes its negative Hölder norm on $[s,t]$.

By linearity and density, this immediately allows us to extend the notion of integral against $B$ to 
any integrand in $L^q( (\Omega,\CF_r), \CC^{-\kappa})$ for any $0\le \kappa<H-\f 12$ (which may no longer agree with the Young integral).
\end{lemma}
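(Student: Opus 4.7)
The plan is to split the integral via the Mandelbrot--Van Ness decomposition~\eqref{e:decompB}:
$$\int_s^t F_r\,dB_r = \int_s^t F_r\,d\bar B_r^s + \int_s^t F_r\,d\tilde B_r^s\;,$$
and to estimate each piece separately.

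For the Wiener part, I use that $\tilde B^s$ is independent of $\CF_s$ while each $F_r$ is $\CF_s$-measurable on $[s,t]$; conditionally on $\CF_s$ the integral is therefore a centred Gaussian whose conditional variance is the quantity computed in~\eqref{e:RKHS}. Lemma~\ref{lemma-kernel} bounds this variance pathwise by $|t-s|^{2H-2\kappa}|F|_{-\kappa}^2$, and Gaussian moment equivalence yields
$$\Bigl\|\int_s^t F_r\,d\tilde B_r^s\Bigr\|_p \lesssim |t-s|^{H-\kappa}\,\bigl\||F|_{-\kappa}\bigr\|_p \le |t-s|^{H-\kappa}\,\bigl\||F|_{-\kappa}\bigr\|_q\;,$$
the last inequality being Jensen on the probability space (using $p \le q$).

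For the Riemann--Stieltjes integral $\int_s^t F_r\,d\bar B_r^s$, introduce the pathwise primitive $\Phi_r \eqdef \int_s^r F_u\,du$, which satisfies $\Phi_s = 0$ and, directly from the definition of the negative Hölder norm, the pathwise bound $|\Phi_r| \le |F|_{-\kappa}(r-s)^{1-\kappa}$. Differentiating the Mandelbrot--Van Ness kernel and using Gaussian scaling yields, for any finite $p'$,
$$\|\partial_r \bar B_r^s\|_{p'} \lesssim (r-s)^{H-1}\;,\qquad \|\partial_r^2 \bar B_r^s\|_{p'} \lesssim (r-s)^{H-2}\;.$$
Integrating by parts on $[s+\epsilon,t]$ and letting $\epsilon \to 0$ (the boundary term at $s+\epsilon$ vanishes in $L^p$ since, by Hölder with exponents $q$ and $pq/(q-p)$, one has $\|\Phi_{s+\epsilon}\,\partial_r\bar B_r^s|_{r=s+\epsilon}\|_p \lesssim \||F|_{-\kappa}\|_q\,\epsilon^{H-\kappa}\to 0$), one obtains
$$\int_s^t F_r\,d\bar B_r^s = \Phi_t\,\partial_t \bar B_t^s - \int_s^t \Phi_r\,\partial_r^2 \bar B_r^s\,dr\;.$$
Applying the same Hölder split to each term on the right then gives
$$\Bigl\|\int_s^t F_r\,d\bar B_r^s\Bigr\|_p \lesssim \bigl\||F|_{-\kappa}\bigr\|_q\Bigl((t-s)^{H-\kappa} + \int_s^t (r-s)^{H-\kappa-1}\,dr\Bigr) \lesssim \bigl\||F|_{-\kappa}\bigr\|_q(t-s)^{H-\kappa}\;,$$
the time integral converging precisely because $\kappa < H$.

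The delicate step is the Riemann--Stieltjes piece: the kernel $\partial_r\bar B_r^s$ is genuinely singular at $r=s$, so the integral cannot be controlled without first transferring a derivative. Integration by parts increases the singularity of the $\bar B^s$ factor to $(r-s)^{H-2}$ but compensates this via the $(r-s)^{1-\kappa}$ regularity of the primitive $\Phi$; the margin just closes because $\kappa < H$, which is in fact weaker than the standing assumption $\kappa < H-\tfrac12$ (used only in the Wiener part through Lemma~\ref{lemma-kernel}). The strict inequality $q>p$ is not a cosmetic inefficiency but genuinely needed: $\Phi_r$ and $\partial_r^2\bar B_r^s$ are both $\CF_s$-measurable and in general correlated, so Hölder is required to separate them.
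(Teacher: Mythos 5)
Your proof is correct and follows essentially the same route as the paper's: the Mandelbrot--Van Ness split, the conditional Gaussian isometry combined with Lemma~\ref{lemma-kernel} for the $\tilde B^s$ part, and integration by parts against the primitive together with the kernel bounds $\|\partial_r\bar B^s_r\|_{p'}\lesssim (r-s)^{H-1}$, $\|\partial_r^2\bar B^s_r\|_{p'}\lesssim (r-s)^{H-2}$ and H\"older's inequality for the $\bar B^s$ part. You merely spell out the H\"older exponents and the vanishing of the boundary term, which the paper leaves implicit.
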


\begin{proof}
 Since our set-up is translation invariant, we restrict 
ourselves to the case $s=0$ without loss of generality and we write
\begin{equ}
\int_0^t F_r\,dB_r = \int_0^t F_r\,d\bar B_r + \int_0^t F_r\,d\tilde B_r
\eqdef I_1 + I_2\;.
\end{equ}
To bound $I_1$, we note that  $r\mapsto {\bar B_r}(\omega)$ is a smooth function on $(0, \infty)$  
satisfying the bounds, for any $p\ge 1$,
\begin{equ}[e:boundBbar]
\|\dot {\bar B}_r\|_p \lesssim r^{H-1}\;,\qquad
\|\ddot {\bar B}_r\|_p \lesssim r^{H-2}\;.
\end{equ}
We then integrate $I_1$ by parts, so that 
\begin{equ}
I_1 = \dot {\bar B}_t \int_0^t F(r)\,dr -\int_0^t \int_0^r F_u\,du\,\ddot{\bar B}_r\,dr \;,
\end{equ}
and the required bound follows from Hölder's inequality.

Concerning $I_2$, since the integrand is $\CF_0$-measurable and $\tilde B$ is 
independent of $\CF_0$, the Wiener integral $I_2$ is Gaussian and its $L_p$ norm is bounded by its $L_2$ norm.
 We can proceed as if the integrand were deterministic and use Lemma~\ref{lemma-covariance}, so that 
 one has the bound
\begin{equ}[e:boundA2]
\E |I_2|^p   \lesssim \E \Big|\int_0^t\int_0^t F(r)\,F(s)\, \d^2_{r,s} R(r,s)\,dr\,ds\Big|^{p/2}\;.
\end{equ}
Inserting the bound from Lemma~\ref{lemma-kernel}  into \eqref{e:boundA2}, 
we obtain the bound $\|I_2\|_p \lesssim \||F|_{-\kappa}\|_p \;t^{H-\kappa}$ as required.
\end{proof}

\subsection{Stochastic sewing lemma}

Let $A_{s,t}$ denote a two parameter stochastic process with values in $\R^n$,  where $s\le t$. 
Both $s $ and $t$  take values in a fixed finite interval $[a,b]$.  
We are interested in situations where $A$ is close to being an increment. To quantify this,
for any $s< u< t$, set
\begin{equs}
\delta A_{sut} &\eqdef A_{s,t} - A_{s,u} - A_{u,t}\;,
\end{equs} 
which vanishes if and only if $A_{s,t}$ is the increment of a one-parameter function.
In the cases of interest to us, the family of `defects'  $\delta A_{sut}$ is typically 
much smaller than  $A_{s,t}$ itself for $|t-s|$ small.

Let us now quantify this more precisely.
Given $p\ge 2$ and an exponent $\eta>0$, we define the space $H_\eta^p$ 
of continuous functions $(s,t) \mapsto A_{s,t} \in L^p(\Omega,\CF_t)$ such that
\begin{equ}[e:supbound]
\|A\|_{\eta,p} \eqdef \sup_{s < t} \f{ \| A_{s,t} \|_{p} }  {|t-s|^{\eta}} < \infty\;,
\end{equ}
where $\|\fat\|_p$ denotes the norm in $L^p(\Omega)$.
We also define the space $\bar H_\eta^p$ 
of maps $A_{s,t}$ as above such that 
\begin{equ}[e:deltabound]
\$A\$_{\eta,p} \eqdef \sup_{s < u < t} \f{ \|\E(\delta A_{sut}\,|\,\CF_s) \|_{p} }  {|t-s|^{\eta}} < \infty\;.
\end{equ}
Then $H_\eta^p$ is a Banach space with norm $ \|\fat\|_{\eta,p}$, while $\$\fat \$_{\eta,p}$ is only a semi-norm.

We will view a partition of the interval $[a,b]$ as a collection $\CP$ of non-empty 
closed intervals that cover $[a,b]$ and
overlap pairwise in at most one point, so we can use the notation
$\sum_{[u,v] \in \CP} A_{u,v}$ for the `Riemann sum' associated with $A$ on the partition $\CP$.
Given such a partition, we write $|\CP|$ for the length of the largest interval contained in $\CP$.
The following result was proved in  \cite[Thm~2.1]{Khoa}. The version presented here is slightly weaker than the
general result, but it will be sufficient for our needs. Note that a deterministic version of the
sewing lemma was given in  \cite{Max} and was instrumental for the reformulation of 
rough path theory \cite{Lyons} as exposed for example in \cite{FrizHairer}. 
A multidimensional analogue to the sewing lemma is given by the reconstruction theorem 
from the theory of regularity structures \cite[Thm~3.23]{Hairer}.

\begin{lemma}
[Stochastic Sewing Lemma]
\label{sewing-lemma}
Suppose that, for some $p\ge 2$, one has $A \in H_\eta^p \cap \bar H_{\bar \eta}^p$
with $\eta > {1\over 2}$ and $\bar \eta > 1$.
Then, for every $t > 0$, the limit in $L^p$
\begin{equ}[e:defIA]
I_{s,t}(A)\eqdef \lim_{|\CP|\to 0} \sum_{[u,v] \in \CP} A_{u,v}\;,
\end{equ}
with $\CP$ taking values in partitions of $[s,t]$,
exists
and there exists a constant $C$ depending only on $p$ and $\eta, \bar \eta$ such that
\begin{equs}
\|I_{s,t}(A)\|_{p} &\le C \bigl(\$A\$_{\bar \eta,p} |t-s|^{\bar \eta} + \|A\|_{\eta,p} |t-s|^{\eta}\bigr)\;,\\
\|\E (I_{s,t}(A) - A_{s,t}\,|\, \CF_s)\|_{p} &\le C \$A\$_{\bar \eta,p} |t-s|^{\bar \eta}\;.
\end{equs}
Furthermore $I(A)$ satisfies the identity $I_{s,u}(A) + I_{u,t}(A) = I_{s,t}(A)$ for
any $s \le u \le t$, so that there exists a stochastic process $I_t(A) = I_{0,t}(A)$ with 
$I_{s,t}(A) = I_t(A) - I_s(A)$.
If one furthermore has the bound $\|\E(A_{s,t}\,|\, \CF_s)\|_p \lesssim |t-s|^{\bar \eta}$, then
$I(A) \equiv 0$.
\end{lemma}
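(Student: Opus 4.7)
The plan is to construct $I(A)$ as the $L^p$ limit along uniform dyadic partitions and then verify independence from the partition sequence. Let $\CP_n$ be the uniform dyadic partition of $[s,t]$ into $2^n$ pieces and set $A^n_{s,t} \eqdef \sum_{[u,v] \in \CP_n} A_{u,v}$. The key step is to show $\{A^n_{s,t}\}$ is Cauchy in $L^p$, for which a one-step refinement gives
\begin{equ}
A^{n+1}_{s,t} - A^n_{s,t} = -\sum_{[u,v] \in \CP_n} \delta A_{umv}\;,\qquad m = (u+v)/2\;,
\end{equ}
so the task reduces to bounding this sum.

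For each defect I would use the decomposition $\delta A_{umv} = \E(\delta A_{umv}\,|\,\CF_u) + M_{u,v}$, where $M_{u,v}$ is $\CF_v$-measurable and centered given $\CF_u$. The conditional-mean piece is bounded termwise by $\$A\$_{\bar\eta,p}$, giving an $L^p$ norm of at most $\$A\$_{\bar\eta,p}|t-s|^{\bar\eta}\,2^{-n(\bar\eta-1)}$. For the remainder, since consecutive intervals in $\CP_n$ share an endpoint, the $M_{u,v}$ taken in their natural order form a martingale-difference sequence with respect to the filtration indexed by the right endpoints. Applying Burkholder--Davis--Gundy followed by the triangle inequality in $L^{p/2}$ gives
\begin{equ}
\Big\|\sum_{[u,v] \in \CP_n} M_{u,v}\Big\|_p \lesssim \Big(\sum_{[u,v] \in \CP_n} \|M_{u,v}\|_p^2\Big)^{1/2} \lesssim \|A\|_{\eta,p}\,|t-s|^{\eta}\,2^{-n(\eta - 1/2)}\;,
\end{equ}
using $\|M_{u,v}\|_p \le 2\|\delta A_{umv}\|_p$ together with the $H_\eta^p$ bound on the three pieces of the defect. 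Since $\bar\eta > 1$ and $\eta > 1/2$, both contributions form summable geometric series and $A^n_{s,t}$ converges in $L^p$ to some $I_{s,t}(A)$.

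Telescoping from $A^0_{s,t} = A_{s,t}$ and summing the two bounds over $n$ yields the claimed $L^p$ bound on $I_{s,t}(A)$; taking $\E(\,\cdot\,|\,\CF_s)$ at each level annihilates the martingale contribution and leaves only the $\$A\$_{\bar\eta,p}$ geometric series, giving the conditional-expectation estimate. Additivity $I_{s,u}(A)+I_{u,t}(A) = I_{s,t}(A)$ is immediate for dyadic $u$ where the partitions of $[s,u]$ and $[u,t]$ concatenate to one of $[s,t]$, and extends by the continuity of $(s,t)\mapsto I_{s,t}(A)$ guaranteed by the $L^p$ bound; independence of the limit from the choice of partition sequence follows by comparing any two partitions through a common refinement with the same two estimates. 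For the final statement, write $A_{t_i, t_{i+1}} = \E(A_{t_i,t_{i+1}}|\CF_{t_i}) + N_i$ inside each $A^n_{s,t}$: the added hypothesis bounds the conditional-mean sum by $2^n(|t-s|/2^n)^{\bar\eta} \to 0$, while the $N_i$ part vanishes as $n\to\infty$ by the same BDG computation, forcing $I(A) \equiv 0$.

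The main obstacle I expect is the BDG step: for the $M_{u,v}$ to be genuine martingale differences one needs them to be simultaneously $\CF_v$-measurable and centered with respect to the filtration at the right endpoint of the preceding interval, which works precisely because consecutive intervals in $\CP_n$ share an endpoint. Beyond this, the proof is bookkeeping of geometric sums controlled by the positive gaps $\bar\eta - 1$ and $\eta - {1\over 2}$.
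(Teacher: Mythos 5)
The paper does not prove this lemma itself: it quotes it from L\^e \cite[Thm~2.1]{Khoa}. Your argument is, in essence, the proof given there \dash bisect dyadically, split each defect $\delta A_{umv}$ into its $\CF_u$-conditional mean plus a centred remainder, bound the conditional means termwise by $\$A\$_{\bar\eta,p}$, and use that consecutive intervals share endpoints so the remainders form a martingale-difference sequence to which Burkholder--Davis--Gundy applies; the gaps $\bar\eta-1>0$ and $\eta-\tfrac12>0$ then make both geometric series summable. All of these steps are correct, as are the telescoping argument for the two stated bounds (conditioning on $\CF_s$ kills the martingale contribution by the tower property) and the argument for $I(A)\equiv 0$ under the extra hypothesis. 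The one place you move too quickly is the upgrade from convergence along dyadic partitions to convergence along arbitrary partitions of vanishing mesh: your one-step identity only covers bisection, and comparing a partition with a general refinement requires a slightly different bookkeeping (e.g.\ removing points of the refinement one at a time, or every other point, as in L\^e's proof). This is routine given the two estimates you already have, but it is not literally a consequence of the bisection identity, so it deserves a line of its own.
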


\begin{remark}
In the general case, the bound \eqref{e:supbound} is required for $\delta A$ only, but 
we will always have this stronger bound at our disposal.
Note also that \cite[Thm~2.1]{Khoa} requires joint continuity of $\E(A_{s,t}\,|\, \CF_s)$,
but this is only ever used to obtain \cite[Eq.~2.8]{Khoa} which we do not need.
\end{remark}

\begin{remark}
A simple special case is the classical result by Young \cite{Young}: for $f \in \CC^\alpha$ and
$g \in \CC^\beta$ with $\alpha + \beta > 1$, one has the bound
\begin{equ}[e:Young]
\Bigl|\int_s^t f_r\,dg_r - f_s (g_t - g_s)\Bigr|\lesssim |f|_\alpha |g|_\beta \,|t-s|^{\alpha+\beta}\;.
\end{equ} 
Strictly speaking, setting $A_{s,t} = f_s(g_t-g_s)$, this is only a special case of
Lemma~\ref{sewing-lemma} for $\beta > {1\over 2}$, but this is only due to the fact that,
as already mentioned, the formulation given here is slightly weaker than the one given in \cite{Khoa}.
\end{remark}
 
\subsection{A stochastic integral with respect to fBm}

For $\kappa, \gamma \in [0,1]$, we introduce a space $\CC^{-\kappa, \gamma}$
of distributions of order $-\kappa$ (in time) with values in the space of Hölder 
continuous functions of order $\gamma$ (in space). 
More precisely, an element $f \in \CC^{-\kappa,\gamma}$ is interpreted as the 
distributional derivative with respect to the first argument of a continuous function 
$\hat f\colon \R \times \R^d \to \R$,   such that $\hat f(0,x) = 0$ and
\begin{equs}[e:normkappa]
|\hat f(t,x) - \hat f(s,x)| &\le K |t-s|^{1-\kappa}\;,\\
|\hat f(t,x) - \hat f(s,x) - \hat f(t,y) + \hat f(s,y)| &\le K |t-s|^{1-\kappa}|x-y|^{\gamma}\;,
\end{equs}
uniformly over $|s-t| \le 1$ and $x,y \in \R^d$. 
Alternatively, one has
\begin{equs}
\sup_x |f(\fat,x)|_{-\kappa} \le K\;,\quad 
\sup_{x\neq y} {|f(\fat,x) - f(\fat,y)|_{-\kappa} \over |x-y|^{\gamma}} \le K\;. 
\end{equs}
We write $|f|_{-\kappa, \gamma}$ for the smallest possible choice of proportionality constant
$K$ in \eqref{e:normkappa}. In particular if $f$ is bounded and $f(
r, \cdot )$ uniformly $\gamma$-H\"older continuous (uniformly in $r$), then  $f\in\CC^{-\kappa, \gamma}$
for every $\kappa > 0$.

The following lemma is only used for the proof of Theorem~\ref{thm:indep}.
\begin{lemma}\label{lemma-convergence-deterministic}
For $\alpha,\kappa \in (0,1)$, the map
\begin{equ}
(f,x) \mapsto \big(t \mapsto f(t,x_t)\big)\;,
\end{equ}
extends to a continuous map from
$\CC^{-\kappa, \gamma} \times \CC^\alpha$ into $\CC^{-\kappa}$
provided that $\gamma \alpha > \kappa$. Furthermore,
one has the bound
\begin{equ}[e:boundComp]
|t \mapsto f(t,x_t)|_{-\kappa} \lesssim |f|_{-\kappa, \gamma} \bigl(1 +  |x|_\alpha^{\gamma}\, T^{\gamma \alpha}\bigr) \;,
\end{equ}
on any interval of length $T$.
\end{lemma}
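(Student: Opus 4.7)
The plan is to define the distribution $t \mapsto f(t,x_t)$ via the deterministic sewing lemma applied to an appropriate germ, then read off the negative H\"older bound and continuity from the sewing output.

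\textbf{Germ and its bounds.}
Given a representative $\hat f$ of $f \in \CC^{-\kappa,\gamma}$, I would set
\begin{equ}
A_{s,t} \eqdef \hat f(t,x_s) - \hat f(s,x_s)\;,
\end{equ}
which, when $f$ is a genuine function, is nothing but $\int_s^t f(r,x_s)\,dr$. From the first bound in \eqref{e:normkappa} one has immediately
\begin{equ}
|A_{s,t}| \le |f|_{-\kappa,\gamma}\,|t-s|^{1-\kappa}\;.
\end{equ}
The key computation is the identity
\begin{equ}
\delta A_{sut} = \bigl(\hat f(t,x_s) - \hat f(u,x_s)\bigr) - \bigl(\hat f(t,x_u)-\hat f(u,x_u)\bigr)\;,
\end{equ}
which by the second bound in \eqref{e:normkappa} and $|x_u-x_s|^\gamma \le |x|_\alpha^\gamma |u-s|^{\gamma\alpha}$ satisfies
\begin{equ}
|\delta A_{sut}| \le |f|_{-\kappa,\gamma}\,|x|_\alpha^\gamma\,|t-s|^{1-\kappa+\gamma\alpha}\;.
\end{equ}
The hypothesis $\gamma\alpha > \kappa$ gives $1-\kappa+\gamma\alpha > 1$, which is exactly the integrability threshold for the deterministic sewing lemma.

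\textbf{Sewing and the $\CC^{-\kappa}$ bound.}
The deterministic sewing lemma then produces a unique additive function $(s,t) \mapsto \CA_{s,t}$ with $|\CA_{s,t} - A_{s,t}| \lesssim |f|_{-\kappa,\gamma}|x|_\alpha^\gamma |t-s|^{1-\kappa+\gamma\alpha}$, and I would \emph{define} $\int_s^t f(r,x_r)\,dr \eqdef \CA_{s,t}$. Combining the two bounds via the triangle inequality gives, on an interval of length $T$,
\begin{equ}
|\CA_{s,t}| \lesssim |f|_{-\kappa,\gamma}\,|t-s|^{1-\kappa}\bigl(1 + |x|_\alpha^\gamma\, T^{\gamma\alpha}\bigr)\;,
\end{equ}
which is precisely \eqref{e:boundComp}. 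For $f$ smooth the Riemann sums $\sum_{[u,v]\in\CP}(\hat f(v,x_u)-\hat f(u,x_u))$ converge to $\int_s^t f(r,x_r)\,dr$ in the classical sense, so the two notions agree by uniqueness of the sewing.

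\textbf{Continuity and extension.}
Since the germ $A$ depends linearly on $f$ and its $(1-\kappa)$- and $(1-\kappa+\gamma\alpha)$-norms are controlled as above, smooth functions are dense in $\CC^{-\kappa,\gamma}$ in the topology induced by the norm $|\cdot|_{-\kappa,\gamma}$ after a mild mollification, and the $\CC^{-\kappa}$-norm bound makes the map from $(f,x)$ to $\CA$ continuous in $f$ for fixed $x$. For the continuity in $x$, given $x,\bar x \in \CC^\alpha$ I would apply the same construction to the germ $A_{s,t} - \bar A_{s,t} = \hat f(t,x_s)-\hat f(s,x_s)-\hat f(t,\bar x_s)+\hat f(s,\bar x_s)$, which by the second bound in \eqref{e:normkappa} is of order $|f|_{-\kappa,\gamma}|x-\bar x|_\infty^\gamma |t-s|^{1-\kappa}$, while $\delta(A-\bar A)$ gains an extra factor $|t-s|^{\gamma\alpha}$ from the same telescoping argument, with constants depending continuously on $|x|_\alpha + |\bar x|_\alpha$. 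The sewing lemma then yields a corresponding bound on $|\CA-\bar \CA|_{-\kappa}$, establishing joint continuity.

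\textbf{Main obstacle.} The only substantive point is verifying that the $\delta A$ exponent actually exceeds $1$ — which is exactly where the hypothesis $\gamma\alpha>\kappa$ enters — everything else reduces to bookkeeping on the two defining bounds of $\CC^{-\kappa,\gamma}$ and a direct invocation of the (deterministic) sewing lemma.
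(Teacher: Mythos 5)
Your proposal is correct and follows essentially the same route as the paper: the germ $\hat f(t,x_s)-\hat f(s,x_s)$, the two bounds from \eqref{e:normkappa} giving exponents $1-\kappa$ and $1-\kappa+\gamma\alpha>1$, the deterministic sewing lemma, and the consistency check for regular $f$. The only (harmless) differences are that you state the $\delta A$ bound in the slightly less sharp combined form $|t-s|^{1-\kappa+\gamma\alpha}$ rather than the factored $|t-u|^{1-\kappa}|s-u|^{\gamma\alpha}$, and that you spell out the continuity in $x$ a little more explicitly than the paper does.
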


\begin{proof}
This is an immediate consequence of the deterministic sewing lemma \cite{Max}:
Let $\Xi_{s,t}$ be a deterministic two parameter process with  
\begin{equ}
|\Xi_{st}| \le \hat K |t-s|^\eta\;,\qquad  |\delta \Xi_{sut}|\le  \hat K_\Lip |t-s|^{\bar \eta}\;,
\end{equ}
for some $\eta>0$ and $\bar \eta>1$. Then, for every $s < t \le T$, the limit 
$I_{s,t}(\Xi)\eqdef \lim_{|\CP|\to 0} \sum_{[u,v] \in \CP} \Xi_{u,v}$
exists along partitions of $[s,t]$ and one has 
\begin{equ}[e:boundIst]
|I_{s,t}(\Xi) - \Xi_{s,t}| \lesssim \hat K_\Lip |t-s|^{\bar \eta}\;.
\end{equ}
To make sense of the distribution $r\mapsto f(r,x_r)$, 
we need to be able to make sense of its integral over any interval $[s,t]$.
A good candidate for this is $I_t(\Xi)$, where
\begin{equ}
\Xi_{s,t} = \int_s^t f(r,x_s)\,dr = \hat f(t,x_s) - \hat f(s,x_s)\;.
\end{equ}
Writing $K = |f|_{-\kappa,\gamma}$, the bounds \eqref{e:normkappa} imply that
$|\Xi_{s,t}| \lesssim K |t-s|^{1-\kappa}$. On the other hand, we have
\begin{equs}
|\delta \Xi_{sut}| &= \big|\hat f(t,x_s) - \hat f(u,x_s) - \hat f(t,x_u) + \hat f(u,x_u)\big| \\
&\lesssim |f|_{-\kappa, \gamma} |t-u|^{1-\kappa} |s-u|^{\gamma \alpha} |x|_\alpha^{\gamma}\;,
\label{e:basicBound}
\end{equs}
so that the corresponding `integral' $I(\Xi)$ is well-defined since  $\gamma \alpha > \kappa$ by assumption. Furthermore, it follows from \eqref{e:boundIst} that
\begin{equ}
|I_{s,t}(\Xi)| \lesssim |t-s|^{1-\kappa} |f|_{-\kappa, \gamma}(1 + |x|_\alpha^{\gamma} |t-s|^{\gamma\alpha})\;,
\end{equ}
which does indeed show that the distributional derivative of $t \mapsto I_{0,t}(\Xi)$ belongs
to $\CC^{-\kappa}$. In the particular case where $f$ is actually a $\beta$-H\"older continuous function in its 
first argument, we have
\begin{equ}
|\Xi_{s,t} - f(s,x_s)(t-s)| \lesssim |t-s|^{1+\beta}\;,
\end{equ}
so that we do have $I_{s,t}(\Xi) = \int_s^t f(r,x_r)\,dr$ and therefore
${d\over dt}I_{0,t}(\Xi) = f(t,x_t)$ as required.
\end{proof}

We now introduce a space of stochastic processes that will be a natural candidate for containing
our solutions. Recall that $\CF_t$ denotes a filtration of the underlying probability space
as in Section~\ref{sec:basicEstimate}, namely it contains $\CG_s = \sigma(\{B_u -B_v\,:\, u,v \le s\})$
and is such that $\tilde B^s_{\fat}$ is independent of $\CF_s$ for every $s$.

\begin{definition}\label{space-CB}
For  $\alpha >0$ and  $p \ge 1$, let $\CB_{\alpha,p}$ denote the Banach space consisting of all $\CF_t$-adapted processes
$x_t$ such that $\delta x \in H_\alpha^p$, where $\delta x_{s,t} = x_t - x_s$. We also write
\begin{equ}[e:boundx]
\|x\|_{\alpha,p} = \sup_{s,t} |t-s| ^{-\alpha}\|x_t - x_s\|_{L^p} \;.
\end{equ}
(To be consistent with \eqref{e:supbound} we should really write $\|\delta x\|_{\alpha,p}$, but we drop the $\delta$
for the sake of conciseness.)
\end{definition}

Our aim is to lay the foundations for a solution theory of SDEs driven by fractional
Brownian motion with right hand sides determined by functions $f \colon \R \times \R^d \to L(\R^m, \R^d)$
such that the following holds.

\begin{lemma}\label{lem:mainTight}
Let $p\ge 2$ and $\alpha>\f 12$. Assume that  $x_{\fat} \in \CB_{\alpha,p}$, 
let $f \in \CC^{-\kappa,\gamma}$ (deterministic)
for some $\kappa, \gamma \ge 0$ such that $\eta = H-\kappa > {1\over 2}$ and 
$\bar \eta = H-\kappa + \gamma\alpha>1$, and define the two parameter stochastic process
\begin{equ}
A_{s,t} = \int_s^t f(r,x_s)\,dB_r\;,
\end{equ}
where the integral is interpreted as a conditional Wiener integral as constructed in Lemma~\ref{lemma-a}.
Then one has $A \in H_{\eta}^p \cap \bar H_{\bar \eta}^p$
and we take the resulting process as our definition of the stochastic integral against
$B$:
 \begin{equ}[e:defIntegral]
\int_s^t f(r,x_r)\,dB_r\eqdef  I_{s,t}(A)\;.
\end{equ}
This integral satisfies the bounds
 \begin{equs}\label{e:boundCA}
\Big \|&\int_s^t f(r,x_r)\, dB_r\Big\|_p \\
&\qquad\lesssim    |f|_{-\kappa, \gamma} \Big( 
|t-s|^{H-\kappa}+ \|x\|_{\alpha,p}^{\gamma} |t-s|^{\bar \eta}\Big) \;,\\
\Big\|&\E\Big( \int_s^t \bigl(f(r, x_r)-f(r,x _s)\bigr)\,dB_r\,\Big|\,\CF_s  \Big)\Big\|_{p} \label{e:boundCA2} \\
&\qquad\lesssim  |f|_{-\kappa, \gamma} \|x\|_{\alpha,p}^{\gamma} |t-s|^{\bar \eta}\;.\quad\qquad 
\end{equs}
uniformly over $s,t \in [0,T]$.
\end{lemma}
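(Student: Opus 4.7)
My approach is to verify the two hypotheses of the stochastic sewing lemma (Lemma~\ref{sewing-lemma}) for the process $A_{s,t}$, namely bounds on $\|A\|_{\eta,p}$ and $\$A\$_{\bar\eta,p}$, and then to read off \eqref{e:boundCA} and \eqref{e:boundCA2} directly from its conclusion. The sharper bound \eqref{e:boundCA2} exploits the fact that, by \eqref{e:defIntegral} and the linearity of the output of the sewing lemma, $\int_s^t (f(r,x_r)-f(r,x_s))\,dB_r = I_{s,t}(A) - A_{s,t}$, which is precisely the quantity controlled by the second conclusion of Lemma~\ref{sewing-lemma}.

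\textbf{Step 1: $A \in H^p_\eta$.} Since $x_s$ is $\CF_s$-measurable and $|f(\fat,x_s)|_{-\kappa}\le |f|_{-\kappa,\gamma}$ deterministically (by the first half of the definition of $\CC^{-\kappa,\gamma}$), a direct application of Lemma~\ref{lemma-a} to $F_r = f(r,x_s)$ gives $\|A_{s,t}\|_p \lesssim |f|_{-\kappa,\gamma}|t-s|^{H-\kappa}$, so $\|A\|_{\eta,p}\lesssim|f|_{-\kappa,\gamma}$.

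\textbf{Step 2: $A \in \bar H^p_{\bar\eta}$ --- the main obstacle.} A direct calculation gives $\delta A_{sut} = \int_u^t (f(r,x_s)-f(r,x_u))\,dB_r$, which makes sense via the $(u,B)$-decomposition $B = \bar B^u + \tilde B^u$ since the integrand is $\CF_u$-measurable. The Wiener integral against $\tilde B^u$ has mean zero conditionally on $\CF_u$, hence on $\CF_s$. For the Riemann part against the smooth process $\bar B^u$, integration by parts (as in the proof of Lemma~\ref{lemma-a}) yields
\begin{equ}
\int_u^t(f(r,x_s)-f(r,x_u))\,d\bar B_r^u = \Phi(t)\dot{\bar B}^u_t - \int_u^t \Phi(r)\ddot{\bar B}^u_r\,dr\;,
\end{equ}
where $\Phi(r) = \hat f(r,x_s) - \hat f(u,x_s) - \hat f(r,x_u) + \hat f(u,x_u)$; the boundary term at $r=u$ vanishes thanks to $\kappa < H$, and the second inequality in \eqref{e:normkappa} yields the key pathwise estimate $|\Phi(r)|\le |f|_{-\kappa,\gamma}(r-u)^{1-\kappa}|x_s-x_u|^\gamma$. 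Combining this with the Gaussian moment bounds $\|\dot{\bar B}^u_r\|_q\lesssim (r-u)^{H-1}$ and $\|\ddot{\bar B}^u_r\|_q\lesssim (r-u)^{H-2}$ (valid for any finite $q$), Hölder's inequality, and $\|x_s-x_u\|_p\le\|x\|_{\alpha,p}(u-s)^\alpha$, together with the elementary inequality $(t-u)^{H-\kappa}(u-s)^{\alpha\gamma}\le (t-s)^{\bar\eta}$, and using that conditional expectation is a contraction on $L^p$, we arrive at
\begin{equ}
\|\E(\delta A_{sut}\,|\,\CF_s)\|_p \lesssim |f|_{-\kappa,\gamma}\|x\|_{\alpha,p}^\gamma |t-s|^{\bar\eta}\;.
\end{equ}

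\textbf{Conclusion.} With both hypotheses verified, Lemma~\ref{sewing-lemma} with $\eta = H-\kappa > \f12$ and $\bar\eta = H-\kappa+\gamma\alpha > 1$ produces the limit $I_{s,t}(A)$ defining the integral, the bound \eqref{e:boundCA}, and the sharper conditional bound \eqref{e:boundCA2}. The delicate point is the Hölder step: since $\ddot{\bar B}^u_r$ is Gaussian with unbounded $L^\infty$-norm, one cannot take the exponent on $\Phi$ exactly equal to $p$; however, since $\gamma\le 1$, the $L^{a\gamma}$-norm of $x_s-x_u$ is dominated by its $L^p$-norm for all $a\in[p,p/\gamma]$, so an arbitrarily small loss of exponent in Hölder keeps the bound within $\|x\|_{\alpha,p}^\gamma$.
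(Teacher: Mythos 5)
Your proof is correct and follows the paper's skeleton exactly: verify the two hypotheses of the stochastic sewing lemma for $A_{s,t}$ and read off \eqref{e:boundCA} and \eqref{e:boundCA2} from its conclusions, with Step~1 identical to the paper's. The one place you diverge is the bound on $\$A\$_{\bar\eta,p}$: the paper simply observes that $\delta A_{sut}=\int_u^t(f(r,x_s)-f(r,x_u))\,dB_r$ has an $\CF_u$-measurable integrand and applies Lemma~\ref{lemma-a} as a black box with base point $u$, using $|f(\fat,x_s)-f(\fat,x_u)|_{-\kappa}\le |f|_{-\kappa,\gamma}|x_s-x_u|^\gamma$ and H\"older with exponent $q=p/\gamma$; this yields a bound on the full norm $\|\delta A_{sut}\|_p$, which dominates the conditional one. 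You instead reopen Lemma~\ref{lemma-a}: you discard the $\tilde B^u$ part via its vanishing conditional expectation and integrate the $\bar B^u$ part by parts against $\Phi$, which is a correct (and slightly finer) argument, but the extra cancellation you extract is not actually needed here — it only becomes essential in the feedback setting of Section~\ref{sec:averaging}, where no unconditional bound on $\delta A$ of order $|t-s|^{\bar\eta}$ is available. Your closing remark about the small loss of exponent in H\"older when $\gamma$ is close to $1$ is well taken (the paper's choice $q=p/\gamma$ is borderline for $\gamma=1$); as you note, conditioning on $\CF_u$ or an infinitesimal exponent shift resolves it in either formulation.
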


\begin{proof}
By Lemma~\ref{sewing-lemma}, it suffices to show that for $\kappa, \gamma$ as in the assumption, 
one has $A \in H_{\eta}^p \cap \bar H_{\bar\eta}^p$ with
 \minilab{w:wantedBoundA}
\begin{equs}[2]
 \eta&=H-\kappa\;, \qquad&  \|A\|_{ \eta,p} &\lesssim |f|_{-\kappa,\gamma}\;.\label{e:wantedBoundA2}\\
 \bar \eta&= H - \kappa + \gamma\alpha\;,  \qquad &   \$A\$_{ \bar\eta,p} &\lesssim |f|_{-\kappa, \gamma} \|x\|_{\alpha,p}^{\gamma} \;, \label{e:wantedBoundA1}
\end{equs}

Since $f(\fat,x_s)$ is $\CF_s$-measurable and $f\in \CC^{-\kappa, \gamma}$ with $\kappa\in [0, H-\f 	12]$, it follows from Lemma~\ref{lemma-a}
 that one has the bound
\begin{equ}[e:boundAst]
 \|A_{st}\|_p \lesssim \sup_{x\in \R^d} |f(\fat, x)|_{-\kappa} |t-s|^{H-\kappa}\;,
\end{equ}
where $C_p$ is a universal constant, thus yielding the bound \eqref{e:wantedBoundA2}.

We now bound $\delta A_{sut}$ for $u$ between $s$ and $t$.
Since
\begin{equ}
\delta A_{sut} = \int_u^t \bigl(f(r,x_s) - f(r,x_u)\bigr)\,dB_r\;,
\end{equ}
and since $s < u$, we are again in the setting
of Lemma~\ref{lemma-a}, which yields
\begin{equ}[e:boundDeltaA]
 \|\delta A_{sut}\|_p \lesssim \||f(\fat,x_s) - f(\fat,x_u)|_{-\kappa}\|_q |t-u|^{H-\kappa}\;.
\end{equ}
We then note that 
\begin{equ}
|f(\fat,x_s) - f(\fat,x_u)|_{-\kappa}
\le |f|_{-\kappa, \gamma} |x_s - x_u|^{\gamma}\;.
\end{equ}
Choosing $q = p/\gamma$ in \eqref{e:boundDeltaA},
we thus obtain
\begin{equ}
 \|\delta A_{sut}\|_p \lesssim |f|_{-\kappa,\gamma}  \|x\|_{\alpha,p}^{\gamma} |u-s|^{\alpha\gamma} |t-u|^{H-\kappa}\;,
\end{equ}
so that \eqref{e:wantedBoundA1} follows. Since $H-\kappa>\f 12$ and
 $H+\alpha \gamma-\kappa>1$  we can now apply 
Lemma~\ref{sewing-lemma} and immediately deduce
 $I_{s,t}(A)=\lim_{|\CP\to 0} \sum_{[u,v]\in \CP} A_{u,v}$ and the required bounds (\ref{e:boundCA}-\ref{e:boundCA2}).
\end{proof}

\begin{remark}
Since $f(t,x)$ is not assumed to be H\"older continuous in $t$, the integral defined in the theorem by sewing up the mixed integrals
(Riemann--Stieltjes integral with respect to the smooth $\bar B_t$ and the Wiener integral with respect to  $\tilde B_t$ with essentially `non-random' integrand)
cannot necessarily be interpreted as a Young integral.
\end{remark}

Finally, we note that in `nice' situations where the
 integral against $B$ also makes sense as a Young integral, the two integrals coincide.
The precise statement is as follows. 

\begin{lemma}\label{lem:sewing=Young}
Under the assumptions of Lemma~\ref{lem:mainTight} and assuming that $f$ is such that, for some $\delta$ with $\delta + H > 1$, one has
\begin{equ}
\sup_x \sup_{|t-s| \le 1} |t-s|^{-\delta} |f(t, x) -f(s, x)| < \infty\;,
\end{equ}
the integral given by \eqref{e:defIntegral} coincides with the usual Young integral.
\end{lemma}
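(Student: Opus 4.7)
The plan is to compare the sewing integral $I_{s,t}(A)$ from \eqref{e:defIntegral}, generated by the germ $A_{u,v} = \int_u^v f(r,x_u)\, dB_r$, with the elementary Young germ $\tilde A_{u,v} \eqdef f(u,x_u)(B_v - B_u)$. If I can establish the single-interval bound $\|A_{u,v} - \tilde A_{u,v}\|_p \lesssim |v-u|^{H+\delta}$, then since $H+\delta > 1$ the corresponding partition sum obeys
\[
\Bigl\|\sum_{[u,v]\in\CP}(A_{u,v}-\tilde A_{u,v})\Bigr\|_p \lesssim |\CP|^{H+\delta-1} (t-s),
\]
which vanishes as $|\CP|\to 0$. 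Combined with the $L^p$-convergence $\sum A_{u,v} \to I_{s,t}(A)$ coming from the stochastic sewing lemma, this forces $I_{s,t}(A) = \lim_{|\CP|\to 0}\sum \tilde A_{u,v}$ in $L^p$.

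For the germ-difference estimate I rewrite
\[
A_{u,v}-\tilde A_{u,v} = \int_u^v\bigl(f(r,x_u)-f(u,x_u)\bigr)\,dB_r,
\]
whose integrand is $\CF_u$-measurable and, by the extra temporal H\"older hypothesis, uniformly bounded by $C|r-u|^\delta$ on $[u,v]$. From the definition of the negative H\"older semi-norm this immediately gives $|f(\cdot,x_u)-f(u,x_u)|_{0}\le C|v-u|^\delta$ on $[u,v]$, so Lemma~\ref{lemma-a} applied with $\kappa=0$ delivers $\|A_{u,v}-\tilde A_{u,v}\|_p \lesssim |v-u|^\delta\cdot|v-u|^H = |v-u|^{H+\delta}$ as required.

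It remains to identify the $L^p$-limit $\lim_{|\CP|\to 0}\sum \tilde A_{u,v}$ with the classical Young integral $\int_s^t f(r,x_r)\, dB_r$. Whenever the latter is defined pathwise, the identification is automatic since pathwise and $L^p$ limits of the same Riemann sums must coincide. The required regularity comes from the composition $r\mapsto f(r,x_r)$, which is $\min(\delta,\alpha\gamma)$-H\"older: both $\delta+H>1$ (by hypothesis) and $\alpha\gamma + H > 1$ (from $\bar\eta = H-\kappa+\alpha\gamma > 1$ with $\kappa\ge 0$) hold, and a pathwise H\"older modification of $x$ is produced by Kolmogorov's continuity criterion from $x\in\CB_{\alpha,p}$, so classical Young theory applies.

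The main subtlety I anticipate lies in this last identification: the sewing-lemma side naturally gives an $L^p$-statement, while the Young integral is typically a pathwise construction, and one needs the composition $r\mapsto f(r,x_r)$ to be pathwise H\"older of suitable order for the classical theory to take effect. The germ-difference bound itself is a one-line application of Lemma~\ref{lemma-a}: all of its content is concentrated in the extra temporal H\"older assumption $\delta + H > 1$, which is precisely what buys the $O(|v-u|^{H+\delta})$ correction that makes the Riemann sums $\sum A_{u,v}$ and $\sum \tilde A_{u,v}$ share a common limit.
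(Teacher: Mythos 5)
Your proof is correct and follows essentially the same route as the paper: the entire content is the germ comparison $\|A_{u,v}-\tilde A_{u,v}\|_p\lesssim|v-u|^{H+\delta}$ obtained from Lemma~\ref{lemma-a} with $\kappa=0$, exactly as in the paper's proof. The only cosmetic difference is that where the paper invokes the final clause of Lemma~\ref{sewing-lemma} (a germ with $\bar\eta>1$ sews to zero), you unwind that clause by the direct triangle-inequality estimate on partition sums, and you add some welcome care about identifying the $L^p$-limit of the first-order Riemann sums with the pathwise Young integral.
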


\begin{proof}
By Lemma~\ref{lemma-a} with $\kappa=0$,  
and  $q > p \ge 2$:
\begin{equs}
\Big\| \int_{u}^{v}&\big( f(r, x_{u}) -f(u, x_{u})\big) \ dB_r \Big\|_p\\
&\le \Big\|   \sup_{ r\in [u, v]}  |f(r, x_{u}) -f(u, x_{u})| \Big\|_q\,  |v-u|^H\lesssim |v-u|^{H+\delta}\;.
\end{equs}
The final part of Lemma~\ref{sewing-lemma} then leads to the desired conclusion, namely that
$$ 
\lim_{|\CP|\to 0} \sum_{[u,v]\in \CP}\left(  \int_{u}^{v} f(r,x_u)\,dB_r-f(u, x_u)(B_v-B_u)\right)  = 0,
$$
in probability.
\end{proof}

\subsection{A semi-deterministic averaging result}
\label{sec:semidet}

In order to state the main theorem of this section, which is then going
to lead us to the proof of Theorem~\ref{thm:indep}, we introduce the space
$\CC^{\alpha,2}$ of functions that are $\alpha$-Hölder continuous in time, with 
values into the space $\BC^2$. With this notation, we then have the following. 

\begin{theorem}\label{weak-limits}
For $H > {1\over 2}$, let $\alpha, \kappa, \gamma > 0$ satisfy the 
assumptions of Lemma~\ref{lem:mainTight} and $\alpha < H-\kappa$.
Let furthermore $\zeta \in (\alpha,1]$ and
let $f_n, \bar f: \R_+\times \R^d \to L(\R^m, \R^d)$ 
be in $\CC^{\zeta,2}$ such that 
\begin{equ}
\lim_{n \to \infty} |f_n - \bar f|_{-\kappa,\gamma} = 0\;.
\end{equ}
Let $x^n$ and $x$ be the $\CC^\alpha$ solutions to the equations
\begin{equs}\label{limit-equations}
dx^n_t=f_n(t,x_t^n) \, dB_t, \qquad  dx_t=\bar f(x_t) \, dB_t\;,
\end{equs}
with $x^n_0 = x_0$ and the integrals interpreted pathwise in Young's sense.
Then, $x^n\to x$ in probability in $\CC^\alpha$. The same holds if 
the equations include a term with $dB$ replaced by $dt$.
\end{theorem}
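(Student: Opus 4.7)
The plan is to reduce to the deterministic residual bound of Lemma~\ref{general-convergence} combined with the stochastic bound of Lemma~\ref{lem:mainTight}. Write
$$x^n_t = Z^n_t + \int_0^t \bar f(r, x^n_r)\,dB_r\;,\qquad Z^n_t \eqdef x_0 + \int_0^t (f_n-\bar f)(r,x^n_r)\,dB_r\;,$$
so that $x^n$ and $x$ satisfy the same equation driven by $\bar f$ but with residuals $Z^n$ and $\bar Z\equiv x_0$ respectively. A time-dependent version of Lemma~\ref{general-convergence} (obtained either by augmenting the driving path to $(B_t,t)\in\R^{m+1}$ so that the equation becomes autonomous on $\R^{d+1}$, or by a cosmetic rereading of its proof using the $\zeta$-Hölder regularity of $\bar f$ in $t$ with $\zeta+\alpha>1$) then yields
$$|x^n-x|_\alpha \lesssim \exp\bigl(C|B|_\beta^{1/\beta} + C|Z^n - x_0|_\alpha^{1/\alpha}\bigr)\,|Z^n-x_0|_\alpha\;,$$
for some $\beta\in(\alpha,H)$ with $\alpha+\beta>1$. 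By Fernique's theorem $|B|_\beta$ has Gaussian tails, so it suffices to show $|Z^n-x_0|_\alpha\to 0$ in probability; the ``$dB$ replaced by $dt$'' variant is handled identically since an additional smooth drift contributes only a Lipschitz perturbation.

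To control $Z^n$, one first needs a uniform a priori bound $\sup_n\|x^n\|_{\alpha,p}<\infty$ on $[0,1]$ for some $p$ large enough. Since $f_n\in\CC^{\zeta,2}$ with $\zeta+H>1$, Lemma~\ref{lem:sewing=Young} identifies the pathwise Young integral with the stochastic sewing integral of Lemma~\ref{lem:mainTight}, so that Lemma applied with $f=f_n$ gives, on any subinterval of length $T\le 1$,
$$\|x^n\|_{\alpha,p}\lesssim |f_n|_{-\kappa,\gamma}\bigl(T^{H-\kappa-\alpha} + \|x^n\|_{\alpha,p}^\gamma\,T^{H-\kappa+\gamma\alpha-\alpha}\bigr)\;.$$
Since $\gamma\in(0,1]$, the super-linear term is absorbed by choosing $T$ small (via Young's inequality when $\gamma<1$, direct absorption when $\gamma=1$), and the resulting bound is propagated across $[0,1]$ by iteration. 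Uniformity in $n$ is automatic because $|f_n|_{-\kappa,\gamma}\le |f_n-\bar f|_{-\kappa,\gamma}+|\bar f|_{-\kappa,\gamma}$ is bounded.

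Applying Lemma~\ref{lem:mainTight} a second time, now to the integrand $(f_n-\bar f)(r,x^n_r)$ and using the a priori bound, yields
$$\|Z^n_t - Z^n_s\|_p \lesssim |f_n-\bar f|_{-\kappa,\gamma}\bigl(|t-s|^{H-\kappa} + |t-s|^{H-\kappa+\gamma\alpha}\bigr)\;,$$
with implicit constants uniform in $n$. Since $\alpha<H-\kappa$ by hypothesis, choosing $\alpha'\in(\alpha,H-\kappa)$ and $p$ large enough that $\alpha'-1/p>\alpha$, Kolmogorov's continuity theorem upgrades the $L^p$ bound on increments of $Z^n-x_0$ to $|Z^n-x_0|_\alpha\to 0$ in probability. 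Combining this with the deterministic residual estimate from the first paragraph delivers $|x^n-x|_\alpha\to 0$ in probability, as required.

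The main obstacle is really the self-referential a priori bound: Lemma~\ref{lem:mainTight} produces an inequality of the form $\|x^n\|_{\alpha,p}\lesssim A+B\|x^n\|_{\alpha,p}^\gamma$ which cannot be solved directly and has to be tamed by a short-interval bootstrap before iterating. By contrast, the time-dependence of $\bar f$ in Lemma~\ref{general-convergence} is a purely cosmetic nuisance, resolved by path augmentation or a verbatim rereading of its proof, and the passage from $L^p$ to $\CC^\alpha$ convergence is standard Kolmogorov.
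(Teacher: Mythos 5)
Your proposal follows essentially the same route as the paper: the same decomposition of $x^n$ into a residual $Z^n$ plus an integral against $\bar f$, the same self-referential a priori bound on $\|x^n\|_{\alpha,p}$ from Lemma~\ref{lem:mainTight} tamed on short intervals and iterated, a second application of Lemma~\ref{lem:mainTight} to bound $Z^n-x_0$, Kolmogorov to pass to $\CC^\alpha$, and Lemma~\ref{general-convergence} to conclude. Your added remarks (invoking Lemma~\ref{lem:sewing=Young} to identify the two integrals, and handling the time-dependence of $\bar f$ by path augmentation) are correct refinements of points the paper leaves implicit, but do not change the argument.
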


\begin{proof}
The fact that \eqref{limit-equations} admits unique solutions in $\CC^\alpha$ 
for every realisation of $B \in \CC^\beta$ with $\beta > \alpha$ and $\alpha + \beta > 1$
is standard. (Combine Lemma~\ref{lem:Lip} with \cite{Young} to show that the Picard
iteration is contracting in $\CC^\alpha$ with fixed initial value.)
 
Let us first obtain bounds on $x^n$ that are uniform in $n$. 
For any $\f12 < \alpha < H-\kappa$ satisfying $\alpha \gamma> 1-H+\kappa$, we can apply  bound \eqref{e:boundCA} of Lemma~\ref{lem:mainTight}
so that, over any interval $[0,T]$, we obtain the bound
\begin{equ}
\|x^n\|_{\alpha,p}
\lesssim T^{H-\alpha-\kappa}\big(1 + T^{\gamma\alpha} \|x^n\|_{\alpha,p}^{\gamma}\bigr)\;,
\end{equ}
which immediately implies that $\|x^n\|_{\alpha,p}\le 1$, uniformly over $n$, provided
that we choose a sufficiently short time interval. This bound can be iterated and therefore
yields an order one a priori bound on $\|x^n\|_{\alpha,p}$ over any fixed time interval.

We then note that we can write
\begin{equ}
x_t = Z_t + \int_0^t \bar f(x_s)\,dB_s\;,\qquad
x^n_t = Z^n_t + \int_0^t \bar f(x^n_s)\,dB_s\;,
\end{equ}
with
\begin{equ}
Z_t = x_0 \;,\qquad Z^n_t = x_0 + \int_0^t \big(f_n(s,x^n_s) - \bar  f(x^n_s)\big)\,dB_s\;.
\end{equ}
It now follows again from (\ref{e:boundCA}) in Lemma~\ref{lem:mainTight} that, over any fixed time interval,
one has the bound
\begin{equ}
\|Z^n - Z\|_{H-\kappa,p} \lesssim |f_n-\bar f|_{-\kappa,\gamma} (1+\|x^n\|_{\alpha,p}^{\gamma})\;.
\end{equ}
Note now that by Kolmogorov's continuity theorem, we have for any $\delta, \zeta > 0$ the inclusions
\begin{equ}[e:Kolmogorov]
L^p(\Omega, \CC^\zeta) \subset \CB_{\zeta,p} \subset L^p(\Omega, \CC^{\zeta - {1\over p}-\delta})
\end{equ}
so that, choosing $p$ large enough, we conclude that 
$|Z^n - Z|_\alpha \to 0$
 in $L_p$, for any $p\ge 2$,  as $n \to \infty$. 
The claim now follows from Lemma~\ref{general-convergence}. 
\end{proof}

The type of application of this theorem that we have in mind is that when $f_n$ is 
for example given by
\begin{equ}[e:speedF]
f_n(t,x) = F(x,y_{nt})\;,
\end{equ}
for some smooth function $F$ and for a stationary stochastic process $y_t$ 
that is independent of the driving noise $B$. (This is so that $f_n$ can be considered deterministic.)
To give a more concrete setting, given any two random variables $X$ and $Y$, we
can measure their degree of independence $\alpha(X,Y)$ (also called the 
`strong mixing coefficient') by
\begin{equ}
\alpha(X,Y) = \sup\big\{\P (A \cap B) -  \P(A)\, \P(B)\,:\, A\in \sigma(X), B\in \sigma(Y)\}\;.
\end{equ}
Note that if $F$ and $G$ are two bounded centred functions, then 
\begin{equ}[e:covarBound]
\bigl|\E F(X) G(Y)\bigr| \le 4\alpha(X,Y) |F|_\infty|G|_\infty \;,
\end{equ}
see \cite{Ibragimov}.
The following proposition is then crucial.

\begin{lemma}\label{le:crucial}
Let $\CY$ be a Polish space and let $(y_t)_{t\in \R}$ be a stationary $\CY$-valued 
stochastic process such that 
$\alpha(y_0,y_t) \lesssim t^{-\delta}$
for some $\delta > 0$. Let $F \colon \R^d \times \CY \to \R$ be
a measurable function, $\CC^2$ in the first variable, such that 
\begin{equ}
|F(x,y)| \le K\;,\qquad |F(x,y) - F(z,y)| \le K|x-z|\;,
\end{equ}
uniformly over $y \in \CY$ and $x,z \in \R^d$. Assume for simplicity that outside of a compact set $F$ is periodic in its first argument.

Then, for every $\kappa > 0$, every $\gamma < 1$ and every $p \ge 1$, the sequence $f_n$ defined as in
\eqref{e:speedF} is such that $|f_n - \bar f|_{-\kappa,\gamma} \to 0$ in $L_p$ as $n \to \infty$, with
$\bar f(x) = \int F(x,y)\,\mu(dy)$, where $\mu$ denotes the law of $y_t$ for any fixed $t$.
\end{lemma}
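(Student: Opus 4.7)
The plan is to reduce the statement to an $L^2$ moment bound coming from the mixing assumption, promote it to all $L^p$ by interpolation with the trivial $L^\infty$ bound, and finally pass to the suprema entering $|\cdot|_{-\kappa,\gamma}$ by a standard Kolmogorov continuity argument on a compact $x$-box (made available by periodicity).

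Set $H_x(y) := F(x,y) - \bar f(x)$ and $G_n(s,t,x) := \int_s^t(f_n(r,x) - \bar f(x))\,dr$. The change of variable $u = nr$ together with stationarity of $y$ gives
\begin{equ}
G_n(s,t,x) \eqlaw \frac{1}{n}\int_0^{n(t-s)} H_x(y_u)\,du\;.
\end{equ}
Since $H_x$ is centred under $\mu$ and uniformly bounded by $2K$, the covariance bound \eqref{e:covarBound} combined with $\alpha(y_0,y_t) \lesssim t^{-\delta}$ gives $|\E H_x(y_u)H_x(y_v)| \lesssim 1 \wedge |u-v|^{-\delta}$. Setting $\bar\delta := \delta \wedge 1$ and integrating this over $[0,n(t-s)]^2$ (with a harmless logarithmic correction when $\delta=1$) yields the $L^2$ estimate
\begin{equ}
\|G_n(s,t,x)\|_2 \lesssim n^{-\bar\delta/2}(t-s)^{1-\bar\delta/2}\;.
\end{equ}

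Interpolating with the pointwise bound $|G_n(s,t,x)| \le 2K(t-s)$ via $\|\cdot\|_p \le \|\cdot\|_2^{2/p}\|\cdot\|_\infty^{1-2/p}$ then delivers, for any $p \ge 2$,
\begin{equ}
\|G_n(s,t,x)\|_p \lesssim n^{-\bar\delta/p}(t-s)^{1-\bar\delta/p}\;,
\end{equ}
while the same argument applied to $H_x - H_z$ (using $|H_x - H_z|_\infty \le 2K|x-z|$) yields the Lipschitz variant
\begin{equ}
\|G_n(s,t,x) - G_n(s,t,z)\|_p \lesssim n^{-\bar\delta/p}\,|x-z|\,(t-s)^{1-\bar\delta/p}\;.
\end{equ}

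By periodicity of $F$ outside a compact set, $x$ ranges effectively over a compact box in $\R^d$. Choose $p$ so large that simultaneously $\bar\delta/p + 1/p < \kappa$ and $d/p < 1-\gamma$. A Kolmogorov continuity argument in $t$, applied to $g_n^x(t) := \int_0^t(f_n(r,x) - \bar f(x))\,dr$ and to the differences $g_n^x - g_n^z$, transfers the above moment estimates into $L^p$ control on the $(1-\kappa)$-H\"older seminorms in time, still carrying the factor $n^{-\bar\delta/p}$ and a Lipschitz dependence in $x$. A further Kolmogorov continuity argument in $x$ applied to the $\CC^{1-\kappa}([0,T])$-valued random field $x \mapsto g_n^x$ then produces H\"older-$\gamma$ control in $x$ of the same order, yielding
\begin{equ}
\bigl\||f_n - \bar f|_{-\kappa,\gamma}\bigr\|_p \lesssim n^{-\bar\delta/p} \to 0\;.
\end{equ}
Since this holds for arbitrarily large $p$, the monotonicity of $L^p$ norms delivers the conclusion for every $p \ge 1$.

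The delicate point is the Kolmogorov step: $p$ must be chosen simultaneously large enough to absorb the temporal regularity loss $1/p$ (so as to reach H\"older exponent $1-\kappa$ in time) and the spatial regularity loss $d/p$ (so as to reach H\"older exponent $\gamma < 1$ in space), while keeping $\bar\delta/p>0$ so as to retain the vanishing factor in $n$. With these margins in place, the rest is a routine combination of the covariance bound, the $L^\infty$-interpolation, and Kolmogorov's continuity criterion.
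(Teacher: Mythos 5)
Your proposal is correct and follows essentially the same route as the paper's proof: an $L^2$ bound from the mixing coefficient via \eqref{e:covarBound}, interpolation against the trivial $L^\infty$ bound to reach $L^p$ with a factor $n^{-\delta/p}$, two successive applications of Kolmogorov's continuity criterion (first in time to obtain the $|\cdot|_{-\kappa}$ seminorm, then in space to obtain the $\gamma$-H\"older dependence on $x$), and periodicity to pass from a compact set to all of $\R^d$. Your explicit replacement of $\delta$ by $\bar\delta = \delta\wedge 1$ to keep the diagonal singularity integrable is a small but welcome refinement of the paper's computation, which tacitly assumes $\delta<1$ (harmless, since the hypothesis is monotone in $\delta$).
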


\begin{proof}
 Since $F$ is bounded measurable and $f_n(t,x)=F(x, y_{nt}) \in \CC^{0,1}$, we 
 note that $\bar f $ is bounded Lipschitz continuous.
Replacing $f_n$ by $(f_n-\bar f)/K$, we can assume without loss of generality that $K=1$ and 
$\bar f = 0$. 
Making use of \eqref{e:covarBound}, we then have the bound
\begin{equs}
\E \Big(\int_s^t f_n(r,x)\,dr\Big)^2
&= n^{-2}\E \int_{ns}^{nt}\int_{ns}^{nt}F(x,y_{r})F(x,y_{\bar r})\,dr\,d\bar r \\
&\le|F(x, \cdot)|_\infty^2\,  4 n^{-2}\int_{ns}^{nt}\int_{ns}^{nt}\alpha(y_r,y_{\bar r})\,dr\,d\bar r \\
&\lesssim |F(x, \cdot)|_\infty^2 \, n^{-2}\int_{ns}^{nt}\int_{ns}^{nt} |r-\bar r|^{-\delta}\,dr\,d\bar r \\
&\lesssim |F(x, \cdot)|_\infty^2 \,n^{-2} |nt-ns|^{2-\delta} \lesssim n^{-\delta} |t-s|^{2-\delta}\;.
\end{equs}
On the other hand, we have the trivial bound $\big|\int_s^t f_n(r,x)\,dr\big| \le |t-s|$,
so that for any $p\ge 2$,
\begin{equ}
\Big\|\int_s^t f_n(r,x)\,dr\Big\|_p \lesssim n^{-\f \delta p} |t-s|^{1-\f\delta p}\;.
\end{equ}
Replacing $f_n(r,x)$ by $f_n(r,x)- f_n(r,z)$, we similarly obtain
\begin{equ}
\Big\|\int_s^t\bigl( f_n(r,x) -  f_n(r,z)\bigr)\,dr\Big\|_p \lesssim n^{-\f \delta p} |x-z|  |t-s|^{1-\f\delta p}\;.
\end{equ}
Applying Kolmogorov's continuity criterion, we obtain over any finite time
interval the bound
\begin{equ}
\Big\|\Big|\int_0^{\fat} \bigl( f_n(r,x) -  f_n(r,z)\bigr)\,dr\Big|_{1-\kappa} \Big\|_p
\lesssim n^{-\f \delta p} |x-z|\;,
\end{equ}
provided that we choose $p$ large enough so that
$\f\delta p + \f1p < \kappa$. In other words, we have the bound
\begin{equ}
\big\|\big|f_n(\fat,x) -  f_n(\fat,z)\big|_{-\kappa} \big\|_p
\lesssim n^{-\f \delta p} |x-z|\;.
\end{equ}
This allows us to apply Kolmogorov's criterion a second time, this time in the 
spatial variable, showing that for any compact set $\CK$, we have
\begin{equ}
\Big\|\sup_{x\neq z\atop x,z \in \CK}{\big|f_n(\fat,x) -  f_n(\fat,z)\big|_{-\kappa} \over |x-z|^{\gamma}}
\Big\|_p \lesssim n^{-\f \delta p} \;,
\end{equ}
provided that $p$ is such that $\f dp < 1-\gamma$,
where the proportionality constant in front of $n^{-\f \delta p}$ depends on the compact set $\CK$.
Since $F$ is furthermore assumed to be periodic outside of a compact set, this bound
 extends to the whole space, proving the claim that $|f_n-\bar f|_{\kappa, \gamma}\to 0$
 in $L_p$.
 \end{proof}
 
\begin{proof}[of Theorem~\ref{thm:indep}]
As above, we define
\begin{equ}
f_n(t,x) = f(x,y_{nt})\;,
\end{equ}
for $f$ as in the statement of the theorem. Since 
$\bar f$ is Lipschitz continuous and $\CC^2$, the equation $\dot x_t=\bar f(x_t)\,dB_t$ has a unique solution.
Assume now that $x^n$ is a $\CC^\alpha $ solution to $dx_t^n=f_n(t,x_t^n) \,dB_t$. 
Note that since  $f_n\in \CC^{0,1}$, 
 $t\mapsto f_n (t, x_t^n)\in \CC^{-\kappa}$ for any $\kappa<\alpha$  (by Lemma \ref{lemma-convergence-deterministic})
and the integral $\int_0^tf_n(s, x_s^n) \,dB_s$
makes sense by Lemma~\ref{lem:mainTight}, so the notion of what constitutes a solution is unambiguous.

We now modify $f$ outside of the ball $B_R$ of radius $R$ centred at $0$ so that the resulting function $f_R$ 
is periodic in its first argument 
and satisfies the conditions of Lemma~\ref{le:crucial}.
Write $f_n^R(t,x)\eqdef f_R(x,y_{tn})$
and denote by $x_t^{n,R}$ and $x^{R}_t$ the respective solutions to 
 $d x_t=f_R(x_t, y_{nt})\,dB_t$ and $dx_t=\bar  f_R (x_t)\,dB_t$. 
 By Theorem~\ref{weak-limits}, as $n\to \infty$, 
$x_t^{n,R}$ converges to $x_t^R$ in probability. The reason why we are able to 
apply this result is that, even though the functions $f_n^R$ are not deterministic,
they are independent of $B$.
Furthermore, one has $x_t^{n,R}=x_t^n$
and $x_t^R=x_t$ before they exit $B_R$ so that, sending $R \to \infty$, we conclude that
$x_t^n\to x_t$ in probability, as required.
\end{proof}

To conclude this section, we give a deterministic example of averaging. Fix
a function 
\begin{equs}
F \colon \R \times \R^n \times \R_+ &\to L(\R^m, \R^d) \\
(t,x,\tau) &\mapsto F_\tau(t,x)
\end{equs}
with the property that, for
any fixed $\tau \in \R_+$, the function $F_\tau$ is of class $\BC^2$ and is 
periodic with period $\tau$ in its first argument. We furthermore
assume that, for some positive Radon measure $\mu$ on $\R_+$ and some $ \kappa > 0$, one has
\begin{equ}[e:boundF]
\int_0^\infty  |F_\tau|_{\BC^2} (1+\tau^{ \kappa})\,\mu(d\tau) < \infty\;.
\end{equ}
We then set
\begin{equ}
f_n(t,x) = \int_0^\infty F_\tau(nt,x)\,\mu(d\tau)\;,\qquad
\bar f(x) = \int_0^\infty \f1\tau \int_0^\tau F_\tau(t,x)\,dt \,\mu(d\tau)\;.
\end{equ}

\begin{remark}
A special case is when $\mu$ is atomic, which corresponds to the case when $f_n$ is 
a sum of periodic functions.  
\end{remark}

\begin{proposition}
In the above setting, if we set
$$
d x^{(n)}_t= f_n(t,x^{(n)}_t) \, dB_t\;,\qquad 
d \bar x_t=\bar  f(\bar x_t) \, dB_t\;,
$$
then $x^{(n)}$ converges in probability to $\bar x$.
\end{proposition}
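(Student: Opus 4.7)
The plan is to deduce the proposition directly from Theorem~\ref{weak-limits}: essentially all the work is to verify its quantitative hypothesis $|f_n - \bar f|_{-\kappa,\gamma} \to 0$ for some admissible $\gamma$ and $\kappa$. The qualitative regularity is free: since $\int_0^\infty |F_\tau|_{\BC^2}\,\mu(d\tau) < \infty$ by \eqref{e:boundF}, for every fixed $n$ the function $f_n$ lies in $\CC^{\zeta,2}$ for every $\zeta \in (0,1]$ (its time Lipschitz constant blows up with $n$, but this is irrelevant because Theorem~\ref{weak-limits} only asks for qualitative membership), and $\bar f$ is $t$-independent and $\BC^2$, hence trivially in $\CC^{\zeta,2}$.

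The key step is the estimate on $|f_n - \bar f|_{-\kappa,1}$, which I would extract from a `zero mean $+$ fast periodicity' argument. For each $\tau > 0$ and $x \in \R^d$, set $\bar F_\tau(x) = \f 1\tau \int_0^\tau F_\tau(s,x)\,ds$ and $G_{n,\tau}(t,x) \eqdef F_\tau(nt,x) - \bar F_\tau(x)$. Then $t \mapsto G_{n,\tau}(t,x)$ is periodic with period $\tau/n$ and has zero mean over one period, so an elementary estimate bounding the integral by at most one full period length gives
\begin{equ}
\Big|\int_s^t G_{n,\tau}(r,x)\,dr\Big| \lesssim |F_\tau|_\infty \min\bigl(\tau/n,\, t-s\bigr) \le |F_\tau|_\infty (\tau/n)^\kappa |t-s|^{1-\kappa}\;,
\end{equ}
for any $\kappa \in [0,1]$. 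The same argument applied to the spatial difference $F_\tau(t,x) - F_\tau(t,z)$ (still $\tau$-periodic in $t$, now of sup-norm bounded by $|F_\tau|_{\BC^1}|x-z|$) yields the analogous bound with $|F_\tau|_\infty$ replaced by $|F_\tau|_{\BC^1}|x-z|$. Integrating both bounds against $\mu$ and using \eqref{e:boundF} then gives
\begin{equ}
|f_n - \bar f|_{-\kappa,1} \lesssim n^{-\kappa} \int_0^\infty |F_\tau|_{\BC^2}\,\tau^\kappa\,\mu(d\tau) \lesssim n^{-\kappa}\;,
\end{equ}
which tends to zero for the $\kappa$ furnished by \eqref{e:boundF}.

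With this in hand, I would pick $\kappa > 0$ small enough that $\kappa < H - \f 12$ and that \eqref{e:boundF} applies, set $\gamma = 1$, and choose any $\alpha \in \bigl(\max(\f12,\,1-H+\kappa),\, H-\kappa\bigr)$ and $\zeta \in (\alpha,1]$. These choices fulfil the assumptions of Lemma~\ref{lem:mainTight} and of Theorem~\ref{weak-limits}, so the latter applies and delivers $x^{(n)} \to \bar x$ in probability in $\CC^\alpha$. The only non-routine point is the middle paragraph: the whole scheme hinges on converting the `zero mean times fast periodicity' structure into the negative H\"older decay $|f_n - \bar f|_{-\kappa,1} \lesssim n^{-\kappa}$, and it is precisely here that the integrability exponent from \eqref{e:boundF} is consumed.
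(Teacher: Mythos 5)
Your argument is correct and coincides with the paper's own proof: the same decomposition of $f_n-\bar f$ into the $\mu$-superposition of mean-zero functions of period $\tau/n$, the same elementary bound $\bigl|\int_s^t\bigr| \lesssim |F_\tau|_\infty\min(\tau/n,\,t-s) \le |F_\tau|_\infty(\tau/n)^\kappa|t-s|^{1-\kappa}$ for both the function and its spatial increments, integration against $\mu$ via \eqref{e:boundF} to get $|f_n-\bar f|_{-\kappa,1}\lesssim n^{-\kappa}$, and conclusion by Theorem~\ref{weak-limits}. Your additional care in spelling out the admissible choices of $\kappa$, $\gamma=1$, $\alpha$ and $\zeta$ is a harmless elaboration of what the paper leaves implicit.
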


 \begin{proof}
We have $f_n, \bar f \in \BC^2$ as an immediate consequence of their definitions and \eqref{e:boundF}.
Note also that if $g_\eps$ is periodic with period $\eps$ and averages to $0$, then one has the bound
\begin{equ}
\Bigl|\int_s^t g_\eps(r)\,dr \Bigr| \le |g_\eps|_\infty \bigl(|t-s| \wedge \eps\bigr)\;.
\end{equ}
As a consequence, one has
\begin{equs}
\Big|\int_s^t \bigl(f_n&(r,x) - \bar f(x)\bigr)\,dr\Big|
\lesssim \int_0^\infty |F_\tau(\cdot, x)|_{\infty} \bigl(|t-s| \wedge n^{-1}\tau\bigr)\,\mu(d\tau) \\ 
&\lesssim |t-s|^{1-\kappa} \int_0^\infty |F_\tau(\cdot, x)|_{\infty}  n^{-\kappa}\tau^\kappa\,\mu(d\tau) 
\lesssim n^{-\kappa}|t-s|^{1-\kappa}\;.
\end{equs}
Since one similarly has the bound
\begin{equ}
\Big|\int_s^t \bigl(f_n(r,x) - f_n(r,y) - \bar f(x) + \bar f(y)\bigr)\,dr\Big|
\lesssim n^{-\kappa}|x-y|\,|t-s|^{1-\kappa}\;,
\end{equ}
it follows that 
$
|f_n-\bar f|_{-\kappa,1} \lesssim n^{-\kappa}
$,
and we conclude by Theorem \ref{weak-limits}.
 \end{proof}
 
\section{Averaging with feedback}
\label{sec:averaging}

We now turn to the main result of this article, where we allow for feedback from the slow
dynamic into the fast dynamic. The trade-off is that our averaging result is not as general
as Theorem~\ref{weak-limits},  as we require that the fast dynamic is Markovian.

\subsection{A class of slow\slash fast processes}\label{thm:B}

Fix a smooth compact manifold $\CY$ for the fast variable and consider the slow\slash fast system 
\minilab{e:equation}
\begin{equs}
dx_t^\eps &= f(x_t^\eps,y_t^\eps)\,dB_t + g(x_t^\eps,y_t^\eps)\,dt\;, \label{e:equationx}\\
dy_t^\eps &= {1\over \eps} V_0(x_t^\eps,y_t^\eps)\,dt + {1\over \sqrt \eps}V(x_t^\eps,y_t^\eps)\circ d\hat W_t\;, 
\label{e:equationy}\\
x_0^\eps&=x_0\in \R^d, \qquad y_0^\eps=y_0\in \CY\;,
\end{equs}
where $B$ is an $m$-dimensional fractional Brownian motion with Hurst parameter $H > {1\over 2}$
and $\hat W$  an $\hat m$-dimensional
standard Wiener process  independent  of $B$. Also, $f: \R^d\times \CY \to  L(\R^{m},\R^d)$ and 
$g:\R^d\times \CY \to  \R^d$.  We use the shorthand 
$$V(x_t^\eps,y_t^\eps)\circ d\hat W_t\eqdef \sum_{k=1}^{\hat m}V_k(x_t^\eps,y_t^\eps)\circ d\hat W_t^k$$ for
vector fields $V_i(x,\fat)$   on $\CY$. Similarly $f(x_t^\eps,y_t^\eps)\,dB_t\eqdef \sum_{k=1}^m f_k(x_t^\eps,y_t^\eps)\,dB_t^k$.
We fix a Riemannian metric on $\CY$ and furthermore assume that the following holds.

\begin{assumption}\label{ass:main}
The drift vector field $g$ is uniformly bounded and globally Lipschitz continuous. Also   $f, V_0 \in \BC^2$ and $V_k \in \BC^3$ for $k > 0$.
Furthermore, there exists $\lambda > 0$ such that, for 
all $x \in \R^d$, $y \in \CY$ and $v \in T_y\CY$, one has $\sum_{j>0} \scal{v,V_j(x,y)}^2 \ge \lambda |v|^2$.
\end{assumption}

Solutions will be interpreted as follows. We fix a realisation of the fractional Brownian motion in $\CC^\beta$ 
for some $\beta > {1\over 2}$ and we will look for solutions 
that are  H\"older continuous  of order $\alpha $ for some $\alpha< \f 12$ with $\alpha + \beta > 1$, 
so that integration with respect to the realisation of fractional Brownian motion can (and will)
be interpreted as a Young integral. 
We will see in Theorem~\ref{thm:genSol} below  that
(\ref{e:equation}) is well posed and admits solutions in $\CB_{\alpha,p}$ for any $\alpha<\f 12$. Indeed,
 by the consideration in Remark~\ref{remark-existecne-y},  it is sufficient to consider 
 the case where $\CY$ is a Euclidean space.
A posteriori, one can easily show that the slow variables actually 
satisfy $x_{\fat}^\epsilon  \in \CB_{\beta,p}$ for any $\beta<H$, but this will not be needed.

For any fixed $(x, y) \in \R^d \times \CY$ and any time $s\in \R$, consider the SDE
\begin{equs}[e:xfrozen]
dY_{s,t} = {1\over \eps} V_0(x,Y_{s,t})\,dt + {1\over \sqrt \eps} V(x,Y_{s,t})\circ d\hat W_t\;,\qquad
Y_{s,s}=y\;,
\end{equs}
with $t \ge s$. We write  $Y_{s,t} = \bar \Phi^x_{s,t}(y)$ for the solution flow associated to this equation, 
which exists for all time and is unique under our assumptions. 
The superscript denotes the frozen variable $x$ and we have refrained from adding also $\eps$ to the notation.

Write now
$\CP_t^x$ for the Markov transition semigroup on $\CY$ with generator
\begin{equ}
\CL^x = {1\over \eps} V_0(x,\fat) + {1\over 2\eps} \sum_{i=1}^{\hat m} (V_i(x,\fat))^2\;,
\end{equ} 
(vector fields are identified with first order differential operators in the usual way),
so for any points $x\in \R^d$, $y\in \CY$, and bounded measurable $F: \CY\to \R$, 
$\CP_t^xF(y)=\E F(\bar \Phi^x_{s,t}(y))$.
Note that since $\CY$ is compact and the diffusion for $y$ is uniformly elliptic for any $x\in \R^d$, 
the solution $\bar \Phi^x_{s,t}(y)$ with generator $\CP_t^x$ admits a unique invariant 
probability measure $\mu^x$ on $\CY$. 

\begin{remark}
The semigroup $\CP_t^x$ depends on $\eps$, but in a trivial way, i.e.\ only though a time change.
In particular, the family $\mu^x$ of invariant measures does not depend on $\eps$.
\end{remark}

Writing $\bar f(x) = \int_\CY f(x,y)\,\mu^x(dy)$ and similarly for $\bar g$, the following is our main result, the proof of which will be given in Section~\ref{sec:proofmain}.

\begin{theorem} \label{thm:main}
Assume Assumption~\ref{ass:main}.  Let $B_t$ be a fBm with Hurst parameter $H>\f 12$ and let $\hat W_t$ be an independent Brownian motion. Then over any finite time interval $[0,T]$
and for any $\beta<H$, the solution $x_t^{\eps}$ to \eqref{e:equation} converges, 
in probability in $\CC^\beta$ as $\epsilon\to 0$,  to the unique limit $\bar x_t$ solving
\begin{equ}
d\bar x_t = \bar f(\bar x_t)\,dB_t + \bar g(\bar x_t)\,dt\;, \qquad \bar x_0=x_0\;.
\end{equ}
Furthermore, there exists an exponent $\kappa > 0$ (depending on $\beta$) such that 
\begin{equ}
\lim_{\eps \to 0} \P \bigl(| x_t^\epsilon-\bar x_t|_{\beta} > \eps^\kappa\bigr) = 0 \;.
\end{equ}
\end{theorem}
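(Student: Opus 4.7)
The plan is to realise $x^\eps$ as a small perturbation of the averaged flow and then to deploy the deterministic stability bound of Lemma~\ref{general-convergence}. Concretely, we set
\begin{equ}
Z^\eps_t = x_0 + \int_0^t \bigl(f(x^\eps_r,y^\eps_r)-\bar f(x^\eps_r)\bigr)\,dB_r + \int_0^t \bigl(g(x^\eps_r,y^\eps_r)-\bar g(x^\eps_r)\bigr)\,dr\;,
\end{equ}
so that $x^\eps_t = Z^\eps_t + \int_0^t \bar f(x^\eps_r)\,dB_r + \int_0^t \bar g(x^\eps_r)\,dr$, while $\bar x$ satisfies the same equation with $Z^\eps$ replaced by the constant $x_0$. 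A preliminary step is to derive the a priori bound $\|x^\eps\|_{\alpha,p} = O(1)$, uniformly in $\eps$, for every $\alpha<H$ and every $p\ge 2$; this follows from the boundedness of $f, g$ guaranteed by Assumption~\ref{ass:main}, Young's estimate \eqref{e:Young}, and the Gaussian-tail moments of $|B|_\beta$ for any $\beta<H$.

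The crucial input is then the quantitative averaging bound announced in Theorem~\ref{fixed-point-map}: applied with $h = f - \bar f$ (whence $\bar h\equiv 0$ by the definition of $\mu^x$), together with the analogous but easier bound for $g-\bar g$ in which no $dB$ is present, it gives
\begin{equ}
\Bigl\|\int_s^t \bigl(f(x^\eps_r,y^\eps_r)-\bar f(x^\eps_r)\bigr)\,dB_r\Bigr\|_p \lesssim \eps^\kappa \bigl(\|x^\eps\|_{\alpha,p}\,|t-s|^{\bar\eta} + |t-s|^\eta\bigr)\;,
\end{equ}
with $\eta > \f 12$ and $\bar\eta > 1$. Combined with the a priori bound this yields $\|\delta Z^\eps_{s,t}\|_p \lesssim \eps^\kappa |t-s|^\eta$, and Kolmogorov's criterion through the inclusion $\CB_{\eta,p}\subset L^p(\Omega,\CC^{\eta-1/p-\delta})$ promotes this into $|Z^\eps - x_0|_\alpha = O(\eps^{\kappa'})$ in probability, for any $\alpha$ slightly above $\f 12$ and some $\kappa' > 0$. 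The main obstacle is the proof of Theorem~\ref{fixed-point-map} itself: the integral is to be realised as the output of the stochastic sewing lemma applied to $A_{u,v} = \int_u^v \bigl(h(x_u, Y^{x_u,\eps}_{u,r}) - \bar h(x_u)\bigr)\,dB_r$, where the slow variable is frozen at the left endpoint so that the inner integral makes sense as a mixed Wiener--Young integral via Lemma~\ref{lem:mainTight}, and the sharp bound on $\|\E(\delta A_{sut}\mid \CF_s)\|_p$ demanded by Lemma~\ref{sewing-lemma} will require fine ergodic control of the frozen semigroup $\CP^x_t$ together with its sensitivity in $x$, to be developed in Sections~\ref{sec:semigroup}--\ref{sec:uniformBounds}.

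Once $|Z^\eps - x_0|_\alpha = O(\eps^{\kappa'})$ in probability has been established, Lemma~\ref{general-convergence} (applied pathwise, with the Lipschitz drift $\bar g(x)\,dr$ absorbed into the augmented driving path $(B_t, t)$) gives
\begin{equ}
|x^\eps - \bar x|_\alpha \le C\exp\bigl(C|B|_\beta^{1/\beta} + C|Z^\eps|_\alpha^{1/\alpha}\bigr)\,|Z^\eps - x_0|_\alpha\;.
\end{equ}
Since $|B|_\beta$ has Gaussian-tail moments and $|Z^\eps|_\alpha$ is tight, restricting to the high-probability event $\{|B|_\beta \le M,\; |Z^\eps|_\alpha \le 1\}$ yields $|x^\eps - \bar x|_\alpha = O(\eps^{\kappa'})$ in probability in $\CC^\alpha$. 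Finally, to upgrade to $\CC^\beta$ for arbitrary $\beta<H$, we use that the same a priori reasoning gives $\|x^\eps\|_{\beta'',p} + \|\bar x\|_{\beta'',p} = O(1)$ in probability for any $\beta''<H$, and the elementary interpolation inequality $|f|_\beta \le |f|_\alpha^{\theta}\,|f|_{\beta''}^{1-\theta}$ with $\beta = \theta\alpha + (1-\theta)\beta''$ transfers the rate to $\CC^\beta$ at the cost of shrinking the exponent from $\kappa'$ to $\theta\kappa'$, completing the proof after a renaming of $\kappa$.
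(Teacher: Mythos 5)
Your overall architecture coincides with the paper's: decompose $x^\eps$ as a perturbation $Z^\eps$ of the averaged dynamics, control $Z^\eps - x_0$ via the quantitative averaging bound of Theorem~\ref{fixed-point-map} applied to $h = f-\bar f$, and conclude with the pathwise stability estimate of Lemma~\ref{general-convergence} with driving path $(B_t,t)$. Your final interpolation step to pass from $\CC^\alpha$ (with $\alpha$ slightly above $\f12$) to $\CC^\beta$ for arbitrary $\beta<H$ is a legitimate variant; the paper instead obtains the rate directly in $\CC^\beta$ by taking $\eta = H-\kappa$ close to $H$ and $p$ large in the Kolmogorov embedding. You also correctly leave the proof of Theorem~\ref{fixed-point-map} itself to the sewing\slash ergodicity machinery, which is how the paper is organised.

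There is, however, a genuine gap in your ``preliminary step''. You claim that the uniform a priori bound $\|x^\eps\|_{\alpha,p} = O(1)$ follows from the boundedness of $f,g$, Young's estimate \eqref{e:Young}, and moments of $|B|_\beta$. This argument fails: the Young bound for $\int_s^t f(x^\eps_r,y^\eps_r)\,dB_r$ necessarily involves the H\"older seminorm of the integrand $r\mapsto f(x^\eps_r,y^\eps_r)$, hence that of $y^\eps$, which diverges like a negative power of $\eps$ as $\eps\to 0$. This is precisely the obstruction the paper identifies as the central difficulty (see the remark following Theorem~\ref{thm:main}: ``The Young bound is of course useless since the H\"older norm of $r\mapsto y^\eps_r$ diverges as $\eps\to 0$''). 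The uniform a priori bound is \emph{not} elementary; it is the content of Corollary~\ref{co:inform-bounds}, and its proof requires the full strength of Theorem~\ref{fixed-point-map}, part (2) --- the estimate \eqref{e:wantedUniformBoundSimple} for general $h$ without the centring assumption --- applied with $h=f$, followed by a short-time absorption of $\|x\|_{\alpha,p}$ and iteration. Note also the logical order this imposes: since the bound \eqref{e:wantedUniformBound} you invoke is itself stated in terms of $\|x^\eps\|_{\alpha,p}$, the a priori bound must be closed via this same sewing machinery \emph{before} your step controlling $Z^\eps$ can be quantified; it cannot be obtained independently by a pathwise Young argument. With the preliminary step replaced by the argument of Corollary~\ref{co:inform-bounds}, the remainder of your proof goes through.
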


\begin{remark}
The lengthy part of the proof is to obtain a priori estimates on the slow variables $\{x_{\fat}^\eps\}$ that are uniform in $\epsilon$.  
The Young bound is of course useless
since the H\"older norm of $r \mapsto y_r^\eps$ diverges as $\eps \to 0$.
 It is more advantageous to use the sewing technique.
Since  $y_r^\eps$ contains noise not independent of the increments of $B_r$, 
\eqref{e:defIntegral} cannot be directly applied to $\int_s^t f(x_s^\eps, y_r^\eps)\,dB_r$.
Instead we break the interaction between the slow and the fast variables by 
replacing $y_r^\epsilon$ by  $Y_{s,r}\eqdef \bar \Phi_{s,r}^{x_s}(y_s)$ and
exploit the fact that, since $x_s^\eps$ is left frozen at its value at time $s$ and $Y_{s,t}$ is driven by 
$\hat W$, the integrand is independent of $\tilde B_r^s$. 
In order to use these for obtaining estimates we first prove that for any fixed $\eps> 0$, our notion of
integral, used for the purpose of estimation,  does still coincide with usual Young integration, see \S\ref{sec:mixed=Young}.

\end{remark}

We  also consider equation (\ref{e:equationy}) separately, with $x_t$ a given $\CF_t$ stochastic process
 not necessarily the solution to (\ref{e:equationx}). More precisely we consider
\begin{equ}\label{fast1}
dy_t^\eps = {1\over \eps} V_0(x_t,y_t^\eps)\,dt + {1\over \sqrt \eps} V(x_t,y_t^\eps)\circ d\hat W_t\;,
\end{equ}
for $x$ any given sufficiently regular $\CF_t$-adapted stochastic process.
Its solution flow will be denoted by  $\Phi^{x}_{s,t}$, namely $ \Phi^x_{s,t}(y,\omega)$ is the solution to the equation
at time $t \ge s$ with $ \Phi^x_{s,s}(y,\omega) = y$. For its existence see Remark~\ref{remark-existecne-y}. 
The chance element $\omega$ in the flow is  often omitted for simplicity
and the superscript denotes the dependence on the auxiliary process $x_t$.

Given an adapted $\R^d$-valued stochastic processes $x_t$
and an initial condition $y_0$, we will henceforth use the symbol $Y_{s,t}$ in order to denote the 
process
\begin{equ}[e:defYst]
Y_{s,t}\eqdef \bar \Phi^{x_s}_{s,t}\big(\Phi_{0,s}^x(y_0)\big)\;,
\end{equ}
namely $Y_{s,\fat}$ is the solution to 
 (\ref{e:xfrozen}) with frozen parameter $x=x_s$ and with initial condition $y=y_s$,
with $y_s$ itself given by the solution to \eqref{fast1}.

\begin{remark}
\label{remark-existecne-y}
Firstly we assume that (\ref{fast1}) is defined on the Euclidean space $\R^{d'}$.  Suppose $V_0 \in C^1$ and $V_1, \dots, V_m$ are $C^2$.  
Set $\tilde V(t, y, \omega)=V(x_t(\omega), y)$,
the randomness in $x_t$ is independent of that in $\hat W_t -\hat W_s$, so there exists a unique global solution to the SDE
$dy_t= \tilde V(t, y_t, \omega) \circ d\hat W_t+\tilde V_0(t, y_t, \omega)dt$,
which follows from the fixed point argument and the condition $|V(x,y)-V(x,y')|\le K d(y,y')$. 
On a compact manifold the global existence is trivial.
Let $(U_i, \phi_i)$ be an atlas of charts in $M$ with the property that  for every $i$, 
$\phi_i(U_i) $ contains the centred ball $B(3r)$ of radius $3r$ and the pre-image of $V_i\eqdef \phi_i(B(r))$ covers the manifold. 
Consider the SDE $dy_t=(\phi_i)_* \tilde V(t, y_t) \circ d\hat W_t+(\phi_i)_* \tilde V_0(t, y_t)dt$.
Since the the vector fields  $(\phi_i)_*(V_i)$ have uniform bounds on $B(3r)$,  there are uniform  estimates, in $i$, on the exit time of
$y_t$ from $V_i$ to $U_i$.\end{remark}

%

\subsection{Estimates for SDEs with mixed Young and It\^o integration}
\label{sec:non-uniform-bounds}
In this section, we show that SDEs driven by a fractional Brownian motion with $H > {1\over 2}$ and
a Wiener process do admit solutions in $\CB_{\alpha, p}$ for arbitrary $p$.  This is similar to the
results obtained in  \cite{Kubilius,Guerra-Nualart}, but since our spaces are slightly
different, our result doesn't appear to follow immediately from theirs. 
The first estimate below is a semi path by path result: we fixed a realisation of the fBm, then consider the It\^o integral. 

\begin{theorem}\label{thm:genSol}
  Let $b\in \CC^\beta$ where $\beta> \f 12$ and consider the equation in $\R^d$
  \begin{equ}[e:generalSDE]
 dz_t=F(z_t) \,db_t+ \sigma(z_t) \,d\hat W_t+ G(z_t)\, dt\;,
 \end{equ}
for some $F \in \BC^2$ and $\sigma, G \in \BC^1$.
Here, the first term on the right hand side is a Young integral while the second term is an Itô integral. 
Given a time interval $[0, T]$
and numbers $\alpha > 0$ and $p\ge 1$ with $1-\beta < \alpha<\f 12 $ (allowing $1-\beta < \alpha<\beta $ if $\sigma$ vanishes identically),
 there exists a unique solution in $\CB_{\alpha, p}$.
 Furthermore,
\begin{equ}[e:wantedBoundGeneral]
 \|z\|_{\alpha, p}  \lesssim   |b|_\beta^{\f1\beta}+1\;.
\end{equ}
  \end{theorem}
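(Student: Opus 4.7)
My plan is to run a Picard iteration in the space $\CB_{\alpha,p}$ on a sub-interval $[0,T_0]$ whose length is chosen in terms of $|b|_\beta$. I would define the map
$$\Psi(z)_t = z_0 + \int_0^t F(z_s)\,db_s + \int_0^t \sigma(z_s)\,d\hat W_s + \int_0^t G(z_s)\,ds,$$
with the first integral understood pathwise in Young's sense and the second in the It\^o sense. To ensure that the pathwise Young integral is well-defined for $z \in \CB_{\alpha,p}$, I would first use Kolmogorov's continuity criterion \eqref{e:Kolmogorov} to embed $\CB_{\alpha,p} \hookrightarrow L^p(\Omega, \CC^{\alpha - 1/p - \delta})$; choosing $p$ large enough so that $\alpha - 1/p - \delta + \beta > 1$ guarantees pathwise Young integrability of $F(z)$ against $b$.

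On $[0,T_0]$, three estimates drive the construction: (i) the classical Young bound \eqref{e:Young} together with $|F(z)|_\alpha \le |F'|_\infty |z|_\alpha$ give
$$\Bigl\|\int_s^t F(z_r)\,db_r\Bigr\|_p \lesssim |F|_\infty |b|_\beta |t-s|^\beta + |F'|_\infty |b|_\beta \|z\|_{\alpha,p}\,|t-s|^{\alpha+\beta};$$
(ii) the Burkholder--Davis--Gundy inequality gives $\|\int_s^t \sigma(z_r)\,d\hat W_r\|_p \lesssim |\sigma|_\infty |t-s|^{1/2}$; (iii) the drift contributes $|G|_\infty |t-s|$. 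Dividing by $|t-s|^\alpha$ and using $\alpha < 1/2 < \beta$, every contribution is suppressed by a power $T_0^\delta$ for some $\delta > 0$, except the term $|F'|_\infty |b|_\beta T_0^\beta \|z\|_{\alpha,p}$, which I would tame by requiring $T_0^\beta |b|_\beta \le c$ for a small universal constant $c$; this forces the natural choice $T_0 \sim |b|_\beta^{-1/\beta}$. The analogous difference estimates for $\Psi(z) - \Psi(\bar z)$, using Lemma~\ref{lem:Lip} to handle $F(z) - F(\bar z)$ and the Lipschitz property of $\sigma, G$ elsewhere, then show $\Psi$ is a contraction on a ball of fixed radius in $\CB_{\alpha,p}$ restricted to $[0,T_0]$.

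Banach's fixed-point theorem yields a unique solution on $[0,T_0]$ with $\|z\|_{\alpha,p,[0,T_0]} \lesssim 1$. Since the contraction parameter depends only on $|b|_\beta$ and not on the initial condition inside the ball, I would iterate over $N \sim T |b|_\beta^{1/\beta}$ consecutive sub-intervals of length $T_0$ to build a solution on $[0,T]$. Telescoping the local Hölder bounds yields $\sup_{t \in [0,T]} \|z_t\|_p \lesssim 1 + T|b|_\beta^{(1-\alpha)/\beta}$. For $|t-s| \ge T_0$ one then estimates $\|\delta z_{s,t}\|_p |t-s|^{-\alpha} \lesssim T_0^{-\alpha}\bigl(1 + T|b|_\beta^{(1-\alpha)/\beta}\bigr) \lesssim 1 + |b|_\beta^{1/\beta}$ (using $T_0^{-\alpha} = |b|_\beta^{\alpha/\beta}$), while for $|t-s| \le T_0$ the local Hölder bound applies directly, producing \eqref{e:wantedBoundGeneral}.

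The main obstacle is that the Young and It\^o integrals naturally live in slightly different function spaces: the former requires pathwise H\"older regularity of the integrand while the latter requires adaptedness and benefits from $L^p$-type moment estimates. Working in $\CB_{\alpha,p}$ with $p$ large (via the Kolmogorov embedding) simultaneously satisfies both requirements. The scale $T_0 \sim |b|_\beta^{-1/\beta}$ arises entirely from the Young estimate; the It\^o and drift terms impose no constraint on $T_0$ and merely contribute bounded additive corrections. Uniqueness follows from essentially the same contraction estimate applied to the difference of any two solutions.
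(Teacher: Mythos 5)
Your self-mapping estimates, the choice $T_0\sim |b|_\beta^{-1/\beta}$, and the final gluing argument reproduce the paper's derivation of the a priori bound essentially verbatim, and that part is sound. The genuine gap is in the sentence ``\dots then show $\Psi$ is a contraction on a ball of fixed radius in $\CB_{\alpha,p}$''. The difference estimate for the Young term rests on Lemma~\ref{lem:Lip}, which gives
\begin{equ}
|F(z)-F(\bar z)|_\gamma \lesssim |F'|_\infty\, |z-\bar z|_\gamma + |F''|_\infty\, |z-\bar z|_\infty\bigl(|z|_\gamma + |\bar z|_\gamma\bigr)\;.
\end{equ}
The second term is quadratic in the unknowns, so when you pass to $L^p(\Omega)$ norms you are forced into H\"older's inequality, e.g.\ $\bigl\| |z-\bar z|_\infty (|z|_\gamma+|\bar z|_\gamma)\bigr\|_p \le \bigl\||z-\bar z|_\infty\bigr\|_{2p}\,\bigl\||z|_\gamma+|\bar z|_\gamma\bigr\|_{2p}$. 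A contraction bound for $\|\Psi(z)-\Psi(\bar z)\|_{\alpha,p}$ therefore requires control of $z-\bar z$ in $\CB_{\alpha,2p}$, or else an almost sure (rather than $L^p$) bound on $|z|_\gamma$ over the ball; neither is available for general elements of the unit ball of $\CB_{\alpha,p}$. This loss of integrability is precisely why mixed Young\slash It\^o equations do not yield to a na\"\i ve Banach fixed point in these moment spaces, and one cannot retreat to a purely pathwise contraction either, because of the It\^o term. The same issue undermines your closing remark that uniqueness ``follows from essentially the same contraction estimate''.

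The paper circumvents this entirely: it only proves \emph{invariance} of a ball (the relevant estimate is linear in $\|z\|_{\alpha,p}$ and hence closes in $L^p$), which gives the a priori bound \emph{conditional} on a fixed point existing; it then imports existence and uniqueness of an adapted solution $z\in L^2(\Omega,\CC^\alpha)$ from \cite[Thm~2.2]{Guerra-Nualart}, and finally upgrades to \eqref{e:wantedBoundGeneral} by stopping $z$ at $\tau_M=T\wedge\inf\{t\,:\,|z\restr[0,t]|_\alpha\ge M\}$, observing that the stopped process lies in $\CB_{\alpha,p}$ and satisfies the a priori bound uniformly in $M$, and letting $M\to\infty$. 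To make your argument self-contained you would have to replace the contraction step by one of the genuinely more delicate well-posedness arguments of \cite{Kubilius,Guerra-Nualart}, or otherwise repair the quadratic term (for instance by working with pathwise-bounded H\"older balls and handling the It\^o integral separately), rather than asserting contractivity in $\CB_{\alpha,p}$ directly.
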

\begin{proof}
Note that the conclusion is more restrictive for larger values of $p$ so we can 
choose $p$ sufficiently large so that $\alpha > \f1p$ and $\alpha + \beta > 1 + \f1p$.
Let $z \in \CB_{\alpha,p}([0,\delta])$ and recall that, by Kolmogorov's continuity theorem, 
we have $\CB_{\alpha, p} \subset L_p(\Omega,\CC^\gamma)$ for any $\gamma<\alpha -\f 1p$ and,
for any fixed $\kappa > 0$, one has
\begin{equ}[e:boundKolmogorov]
\||z|_\gamma\|_p \lesssim \delta^{\alpha - \f1p -\kappa - \gamma} \|z\|_{\alpha,p}\;.
\end{equ}
(This is because, on an interval of length $\delta$,
$|z|_\gamma \le \delta^{\bar \gamma - \gamma}|z|_{\bar \gamma}$ for any $0 < \gamma \le \bar \gamma \le 1$.)
Let us define, for $z \in \CB_{\alpha,p}([0,\delta])$,
\begin{equs}
\Psi(z)&=z_0+ \int_0^{\fat} F(z_r) \,db_r+ \int_0^{\fat} \sigma(z_r)\, d\hat W_r+ \int_0^{\fat} G(z_r)\, dr\\
 &=z_0+\Psi_1(z)+\Psi_2(z)+\Psi_3(z)\;,
\end{equs}
and show that
for a sufficiently small value of $\delta$, $\Psi$ maps the ball of radius $1$ in 
$\CB_{\alpha, p}([0,\delta])$ centred around $z_0$ to itself.

Choosing $\gamma$ such that $\gamma + \beta > 1$ (which is always possible by taking $p$ large enough
and $\kappa$ small enough) and
using Young's bound \eqref{e:Young}, we obtain the estimate 
\begin{equs}\label{estimate-growth}
|\Psi_1(z)|_\alpha 
 &\lesssim    |F(z)|_\gamma |b|_\beta \delta^{\beta+\gamma-\alpha}+ |F|_\infty\, |b|_\beta  \delta^{\beta-\alpha}\\
&\le  |F|_\Lip  |z|_\gamma  |b|_\beta  \delta^{\beta+\gamma-\alpha}  +|F|_\infty  |b|_\beta  \delta^{\beta-\alpha}\;,
\end{equs}
where the H\"older semi-norm $|z|_\gamma$ is really the H\"older semi-norm of $z\restr [0, \delta]$, 
while the H\"older semi-norm $|b|_\beta$ is considered on the full interval $[0,T]$.
Combining this with \eqref{e:boundKolmogorov} and using the fact that 
$\|\fat\|_{\alpha,p}\le \||\fat|_\alpha\|_p$, we obtain the a priori bound  
\begin{equ}
\|\Psi_1(z)\|_{\alpha,p} 
\lesssim 
 \|z\|_{\alpha,p}  |b|_\beta  \delta^{\beta - \f1p - \kappa}  +  |b|_\beta  \delta^{\beta-\alpha}\;,
\end{equ}
where we used that $\alpha + \beta -\f 1p > 1$.
As a consequence of the Burkholder--Davis--Gundy inequality, we immediately obtain the bound
$$\|\Psi_2(z)\|_{\alpha, p}  \lesssim |\sigma |_\infty \delta^{\f 12-\alpha}\;,
$$
provided that $\alpha < \f12$, $p$ is large enough, and $\kappa$ is small enough.
This is the only place where we require that $\alpha<\f 12$, which is due to the regularity of the 
Wiener process. If the Wiener process is absent, we can choose any $\alpha\in (0, \beta)$.
Finally, it is trivial that $$\|\Psi_3(z)\|_{\alpha, p}  \lesssim |G |_\infty \delta^{1-\alpha}.$$

This shows that, assuming that $\Psi$ does admit a fixed point in $\CB_{\alpha,p}$, this fixed point necessarily satisfies 
the bound
\begin{equs}
\|z\|_{\alpha, p}\lesssim  |b|_\beta \delta^{\beta -\alpha}
+ \delta^{\f 12-\alpha} + \delta^{1-\alpha}\;,
\end{equs}
provided that $\delta^\beta |b|_\beta \le c$ for a sufficiently small constant $c > 0$
(depending on $F$). Since this bound is independent of the initial condition, it can be iterated
and necessarily holds on any interval of size $\delta$.
It is then straightforward to verify from the definitions that, on the interval $[0,T]$,
one has
\begin{equ}
\|z\|_{\alpha,p} \lesssim \delta^{\alpha-1}  \sup_{s \in [0,T-\delta]} \|z \restr [s,s+\delta]\|_{\alpha,p}\;,
\end{equ}
which implies that, fixing $\delta$ with $\delta^\beta |b|_\beta = \f 12$,
\begin{equ}
\|z\|_{\alpha, p}\lesssim  |b|_\beta \delta^{\beta -1}
+ \delta^{-\f 12} + 1 \lesssim 1 + |b|_\beta^{\f 1\beta} \;,
\end{equ}
as claimed.

To show that such a solution exists and is unique, we note that by 
\cite[Thm~2.2]{Guerra-Nualart}, there exists a unique adapted process $z \in L^2(\Omega, \CC^\alpha)$ 
solving \eqref{e:generalSDE}. To show that it furthermore satisfies the stronger bound
\eqref{e:wantedBoundGeneral}, write $\tau$ for the stopping time given by
\begin{equ}
\tau_M = T\wedge \inf \{t \in (0,T]\,:\, |z\restr [0,t]|_\alpha \ge M\} \;.
\end{equ}
The process $z^M$ obtained by stopping $z$ at time $\tau_M$ then belongs to $\CB_{\alpha,p}$ 
and the same calculation as above shows that it satisfies the bound \eqref{e:wantedBoundGeneral}.
Since one has $\lim_{M \to \infty} \P(z^M = z) = 1$, the claim  follows at once.
\end{proof}

Since the fractional Brownian motion has moments of all order, we may take average over all fractional Brownian paths and obtain the corollary below. 
\begin{corollary}
Suppose that $B$ and $\hat W$ are independent and let $F$, $G$ and $\sigma$ be as in Theorem~\ref{thm:genSol}.
 Let $\alpha \in (\f 1 p, \f 12)$ be such that $\alpha+H>1$.
 Then for any initial value and any interval $[0, T]$, 
 there exists a unique solution in $\CB_{\alpha,p}$ to
  \begin{equ}
 dz_t=F(z_t) \,dB_t+ \sigma(z_t) \,d\hat W_t+ G(z_t) \,dt\;.
 \end{equ}
 Furthermore, $\|z\|_{\alpha, p}  \lesssim  1+ \||B|^{\f1\beta}_\beta\|_p$. (If $\sigma$ vanishes, we may take  $\alpha \in (\f 1p, H)$.)
 \end{corollary}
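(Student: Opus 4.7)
The plan is a two-step argument: apply Theorem~\ref{thm:genSol} pathwise in the fBm realization $B$ and then integrate out $B$, using independence of $B$ and $\hat W$ together with the fact that Gaussian H\"older norms have all moments.

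First I would fix some $\beta \in (1/2, H)$ with $\alpha + \beta > 1$; such a $\beta$ exists because $H > 1/2$ and $\alpha + H > 1$. Since $B$ is independent of $\hat W$, for almost every fixed realization $b = B(\omega)$ of $B$ (which lies in $\CC^\beta$) the conditional law of $\hat W$ is still that of a standard Wiener process. Thus Theorem~\ref{thm:genSol} applied on the conditional probability space produces a unique solution $z^b \in \CB_{\alpha,p}$ to
\begin{equ}
dz_t = F(z_t)\,db_t + \sigma(z_t)\,d\hat W_t + G(z_t)\,dt\;,
\end{equ}
satisfying the conditional bound
\begin{equ}
\sup_{s<t} |t-s|^{-\alpha} \bigl(\E(|\delta z^b_{s,t}|^p \,|\, B=b)\bigr)^{1/p} \lesssim 1 + |b|_\beta^{1/\beta}\;.
\end{equ}

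The second step is to assemble the family $\{z^b\}_b$ into a single $\CF_t$-adapted process $z$ on the joint probability space. This is routine: the Picard iterates used in the proof of Theorem~\ref{thm:genSol} depend measurably on $b$, so they can be run simultaneously for all $\omega$, producing $\CF_t$-adapted processes on the joint filtration that converge in the relevant $L^p$ sense. Their limit coincides almost surely with $z^{B(\omega)}$ for each $\omega$ by uniqueness.

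Once the adapted solution $z$ is in hand, the required estimate follows from the tower property combined with the pathwise bound above: for every $s < t$,
\begin{equ}
\E |\delta z_{s,t}|^p = \E\bigl[\E(|\delta z_{s,t}|^p \mid B)\bigr] \lesssim |t-s|^{\alpha p}\, \E\bigl(1 + |B|_\beta^{1/\beta}\bigr)^p\;,
\end{equ}
and taking $p$-th roots yields the announced bound $\|z\|_{\alpha, p} \lesssim 1 + \||B|_\beta^{1/\beta}\|_p$. The right-hand side is finite for every $p$ by Fernique's theorem, since $|B|_\beta$ has Gaussian-type tails whenever $\beta < H$. The case $\sigma \equiv 0$ is handled identically, with the extended range $\alpha \in (1/p, H)$ simply inherited from the corresponding statement in Theorem~\ref{thm:genSol}; one then picks $\beta \in (\alpha \vee (1-\alpha), H)$ instead. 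The only genuinely subtle point is the measurable patching of the pathwise solutions into an adapted process, and this is routine once the measurability of the Picard map in $b$ is observed.
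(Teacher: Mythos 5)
Your proposal is correct and follows essentially the same route as the paper, which derives the corollary from Theorem~\ref{thm:genSol} by conditioning on the fBm path and then averaging, using that $|B|_\beta$ has moments of all orders. The paper states this in one line; your additional care about measurably patching the conditional solutions into a single adapted process is a reasonable elaboration of the same argument rather than a different approach.
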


\subsection{Stochastic equals Young}
\label{sec:mixed=Young}

We will make use of the following fact.

\begin{lemma}\label{lem:boundFunny}
Let $\alpha \in (0,1)$ and let $F$ and $G$ be two positive functions such that
\begin{equ}
F(t)\le \int_0^t F^{1-\alpha}(s) \,G(s)\,ds\;.
\end{equ}
Then, one has the bound
$ F^\alpha(t)\le  \alpha\int_0^t \,G(s)\,ds$.
\end{lemma}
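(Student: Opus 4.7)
The plan is to reduce the inequality to a standard differential inequality by introducing the right hand side as an auxiliary function. Set
\begin{equ}
H(t) = \int_0^t F^{1-\alpha}(s)\,G(s)\,ds\;,
\end{equ}
so that by hypothesis $F(t) \le H(t)$. Since $1-\alpha > 0$, this implies $F^{1-\alpha}(t) \le H^{1-\alpha}(t)$, and $H$ is absolutely continuous with $H'(t) = F^{1-\alpha}(t)\,G(t) \le H^{1-\alpha}(t)\,G(t)$ for a.e.\ $t$.

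Now I would examine the function $\psi(t) = H^\alpha(t)$, which is non-decreasing with $\psi(0)=0$. Wherever $H(t) > 0$, the chain rule gives
\begin{equ}
\psi'(t) = \alpha\, H^{\alpha-1}(t)\, H'(t) \le \alpha\, H^{\alpha-1}(t)\, H^{1-\alpha}(t)\, G(t) = \alpha\, G(t)\;.
\end{equ}
The main (minor) obstacle is handling the set where $H$ vanishes: let $t_0 = \sup\{t \ge 0 : H(t)=0\}$. For $t \le t_0$ one has $H(t) = 0$, hence $F(t) = 0$, so $F^\alpha(t) = 0$ and the desired bound holds trivially. For $t > t_0$, $H$ is strictly positive, $\psi$ is absolutely continuous on $[t_0, t]$, and integrating yields
\begin{equ}
\psi(t) = \psi(t) - \psi(t_0) \le \alpha \int_{t_0}^t G(s)\,ds \le \alpha \int_0^t G(s)\,ds\;.
\end{equ}

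Combining this with $F^\alpha(t) \le H^\alpha(t) = \psi(t)$ (again using $\alpha \in (0,1)$ and $F \le H$), we obtain the stated inequality $F^\alpha(t) \le \alpha \int_0^t G(s)\,ds$. No further estimates are needed; the argument is essentially a Bihari/Gronwall-style computation with the explicit integrating factor $H^{\alpha-1}$ making the bound sharp.
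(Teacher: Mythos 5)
Your proof is correct and follows essentially the same route as the paper's: introduce $\hat F(t)=\int_0^t F^{1-\alpha}(s)G(s)\,ds$, observe $\tfrac{d}{dt}\hat F^\alpha = \alpha \hat F^{\alpha-1}F^{1-\alpha}G \le \alpha G$ using $F^{1-\alpha}\le \hat F^{1-\alpha}$, and conclude via $F^\alpha\le\hat F^\alpha$. Your extra care on the set where the auxiliary function vanishes is a harmless refinement of the same argument.
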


\begin{proof}
Let  $\hat F(t) = \int_0^t F^{1-\alpha}(s) \,G(s)\,ds$, so that $\hat F$
is a continuous increasing function and we have the bound
\begin{equ}
{d\over dt}\hat F^\alpha(t) = \alpha \hat F^{\alpha-1} (t) F^{1-\alpha}(t) \,G(t)
\le \alpha G(t)\;,
\end{equ}
since $ F^{1-\alpha} \le \hat F^{1-\alpha}$. The claim then follows since $F^\alpha \le \hat F^\alpha$.
\end{proof}

Recalling $Y_{s,t}$ given by (\ref{e:defYst})
(with $x_t$ arbitrary, not necessarily solution to \eqref{e:equationx}), standard methods
for estimating its deviation  from $y_t$ on the time scale of $[s,t]$ 
 blow up exponentially fast as $\epsilon \to 0$.
(For longer times, we will use the smoothing 
properties of the semi-group). Recall that $y_u=\Phi^x_{0,u}(y_0)=\Phi^x_{s,u} (y_s)$ and $Y_{s,u}=\bar \Phi_{s,u}^{x_s}(y_s)$, and denote by $\rho$ the Riemannian distance on $\CY$.

\begin{lemma}\label{lemma:bounddiffy}
 Suppose $x_{\fat} \in \CB_{\alpha, p}$ where $\alpha<\f 12$ and $\alpha+H>1$. For $p\ge 2$,
\begin{equ}[e:bounddiffy]
\Bigl\| \sup_{s\le u \le t}\rho(y_u,Y_{s,u}) \Bigr \|_p \lesssim
 \|x\|_{\alpha,p}\cdot  \eps^{-{1\over 2}} |t-s|^{{1\over 2}+\alpha}\;,
\end{equ}
provided that $|t-s| \le \min( \f 14 \delta,  c) \eps$ where $\delta$ is  the injectivity radius of $\CY$ and $ c$ a constant depending on the bounds on $V,V_0$.
\end{lemma}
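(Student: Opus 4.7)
The strategy is to rescale time so that the short interval $[s,t]$, whose length is at most $c\eps$, becomes a bounded interval on which both processes obey standard (un-scaled) SDEs. Introduce $\sigma = (u-s)/\eps$ and $\tilde W_\sigma = \eps^{-1/2}(\hat W_{s+\eps\sigma} - \hat W_s)$, which is a standard Brownian motion by scaling and is independent of $\CF_s$. Setting $\tilde y_\sigma = y_{s+\eps\sigma}$ and $\tilde Y_\sigma = Y_{s, s+\eps\sigma}$, on the interval $[0, \tau]$ with $\tau = (t-s)/\eps \le c$ both satisfy Stratonovich SDEs with coefficients $V_0(x_{s+\eps\sigma}, \cdot), V(x_{s+\eps\sigma}, \cdot)$ respectively $V_0(x_s, \cdot), V(x_s, \cdot)$, driven by the common noise $\tilde W$ and sharing the initial condition $y_s$.

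A standard moment estimate for Stratonovich SDEs with $\BC^2$ coefficients on $[0,\tau]$ shows that, in $L^p$, both $\tilde y_\sigma$ and $\tilde Y_\sigma$ stay within distance of order $\tau^{1/2} \le \sqrt{c}$ of $y_s$. Choosing $c$ small enough in terms of the $\BC^2$ bounds on $V, V_0$, we may assume that both processes remain inside the geodesic ball $B(y_s, \delta/4)$ (e.g.\ by introducing a stopping time $\tau_*$ and checking that $\P(\tau_* < \tau)$ is negligible, or by embedding $\CY$ isometrically in some $\R^N$ and working with the extrinsic distance, which is comparable to $\rho$ on bounded regions). In a normal chart centered at $y_s$, $\rho(\tilde y_\sigma, \tilde Y_\sigma)$ is then comparable to the coordinate difference $|\tilde Z_\sigma| \eqdef |\tilde y_\sigma - \tilde Y_\sigma|$.

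In the chart, converting Stratonovich to It\^o and using $\BC^2$ regularity of $V_0, V$, the difference $\tilde Z_\sigma$ satisfies an It\^o SDE of the form $d\tilde Z_\sigma = A_\sigma\,d\sigma + B_\sigma\,d\tilde W_\sigma$ with
\begin{equation*}
|A_\sigma| + |B_\sigma| \lesssim |x_{s+\eps\sigma} - x_s| + |\tilde Z_\sigma|,
\end{equation*}
the It\^o--Stratonovich correction contributing only further bounded smooth terms of the same form. Applying BDG and H\"older to $\phi(\sigma) \eqdef \E \sup_{r \le \sigma}|\tilde Z_r|^p$ one obtains
\begin{equation*}
\phi(\sigma) \lesssim \sigma^{p/2-1}\int_0^\sigma \bigl(\|x_{s+\eps r} - x_s\|_p^p + \phi(r)\bigr)\,dr,
\end{equation*}
the drift contribution being subdominant because $\sigma \le \tau \le c = O(1)$. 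Using $\|x_{s+\eps r} - x_s\|_p \le \|x\|_{\alpha,p}(\eps r)^\alpha$, the source term integrates to a multiple of $\tau^{p/2 + \alpha p}\eps^{\alpha p}\|x\|_{\alpha,p}^p$. Gronwall on $[0, \tau]$ only contributes a multiplicative factor $\exp(C\tau^{p/2}) \lesssim 1$, and unravelling $\tau = |t-s|/\eps$ yields $\phi(\tau) \lesssim \eps^{-p/2}|t-s|^{p(1/2+\alpha)}\|x\|_{\alpha,p}^p$, which is the announced bound after taking $p$-th roots.

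The main obstacle is the localization step: one must argue carefully that, on this short time scale, both flows remain within a single chart so that $\rho$ is comparable to a Euclidean coordinate distance -- this is precisely the purpose of the hypothesis $|t-s| \le c\eps$ with the injectivity radius constraint. Once this is in place the remainder is a routine BDG/Gronwall estimate on a bounded-length rescaled interval, where the smallness of $\tau$ both absorbs any Gronwall-type blow-up and makes the noise contribution dominate the drift.
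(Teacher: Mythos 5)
Your overall strategy --- compare the two flows driven by the common noise $\hat W$ and run a BDG\slash Gronwall estimate on their distance, with the hypothesis $|t-s|\le c\eps$ taming the Gronwall exponential and the source term $\|x_{s+\eps r}-x_s\|_p\lesssim\|x\|_{\alpha,p}(\eps r)^\alpha$ producing the stated exponents --- is the same as the paper's, and your bookkeeping of the powers of $\eps$ and $|t-s|$ is correct. The step that does not close as written is the localization. You propose to work in a normal chart around $y_s$ and to dispose of the complementary event via a stopping time $\tau_*$ with ``$\P(\tau_*<\tau)$ negligible''. This is not enough: the right-hand side $\eps^{-1/2}|t-s|^{1/2+\alpha}\|x\|_{\alpha,p}$ of \eqref{e:bounddiffy} can be arbitrarily small (at the maximal allowed $|t-s|=c\eps$ it equals $c^{1/2+\alpha}\eps^{\alpha}\|x\|_{\alpha,p}\to0$ as $\eps\to0$, and it vanishes with $\|x\|_{\alpha,p}$), whereas after your rescaling $\P(\tau_*<\tau)$ is the probability that a uniformly elliptic diffusion exits a ball of radius $\delta/4$ within a time of order one; this is bounded \emph{below} by a positive constant independent of $\eps$ and of $\|x\|_{\alpha,p}$. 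The resulting contribution $\P(\tau_*<\tau)^{1/p}\,\mathrm{diam}(\CY)$ from the bad event therefore cannot be absorbed into the claimed bound.

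The fix is to avoid stopping altogether and make the comparison global: either your parenthetical alternative (isometrically embed $\CY$ into some $\R^N$, extend the vector fields, and use that on a compact embedded submanifold the extrinsic distance is \emph{globally} comparable to $\rho$), or, as the paper does, replace $\rho$ by $d=g\circ\rho$ with $g$ a smooth concave cut-off equal to the identity below $\delta/4$, so that $d^2$ is globally smooth; one then applies It\^o's formula to $d^{2p}(y_u,Y_{s,u})$ on all of $\CY\times\CY$ with no stopping time, and recovers the bound on $\rho$ from the one on $d$ at the end (the event $\{\sup_u\rho(y_u,Y_{s,u})\ge\delta/4\}$ is handled by Chebyshev applied to $d$, which is harmless since its probability is already controlled by the $2p$-th power of the right-hand side). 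With either of these replacements your argument is correct and is essentially the paper's proof; the remaining differences are cosmetic: the paper does not rescale time, and it handles the mixed term $d^{2p-2}(y_r,Y_{s,r})|x_s-x_r|^2$ produced by It\^o's formula via H\"older together with the elementary Lemma~\ref{lem:boundFunny}, rather than via $(a+b)^p\lesssim a^p+b^p$ at the level of the coefficients as you do.
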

 \begin{proof}
 Since $\CY$ is compact, we can find a function $d$ which agrees with the Riemannian distance
 in a neighbourhood of the diagonal  and
such that $d^2$ is globally smooth. 
(Take $d=g\circ \rho$ for $g:\R_+\to \R$ a smooth concave function with $g(r) = r$ when $r< \delta/4$ 
and $g(r)= \delta/2$ when $r\ge 3\delta/4$, where $\delta$ denotes the injectivity radius of $\CY$.)

We now claim that, by applying It\^o's formula to $d^{2p}(y_u,Y_{s,u})$
and then using the Burkholder--Davis--Gundy inequality, one obtains for $p \ge 1$ the bound
\begin{equs}
&\E \sup_{s \le u\le t}d^{2p} (y_u,Y_{s,u}) \label{e:boundBDG}\\
&\lesssim \f 1 \eps \int_s^t \E d^{2p}(y_r,Y_{s,r})  dr+
\f 1 \eps \int_s^t \E\left( d^{2p-2}(y_r,Y_{s,r})|x_u-x_r|^2 \right)dr.
\end{equs}
We proceed with the proof based on this, and return to give more explanation at the end of the proof.
Let $\sigma_t=\E \sup_{u \le t}d^{2p}(y_u,Y_{s,u}) $.  Using H\"older's inequality on the last term, 
we obtain the bound 
\begin{equ}
\sigma_t \lesssim \f 1 \eps \int_s^t  \sigma_r^{ \f {2p-2} {2p} }\Bigl(\|x_u-x_r\|_{2p}^{2}
+ \sigma_r^{\f 1p}\Bigr)\,dr \;,
\end{equ}
so that Lemma~\ref{lem:boundFunny} yields
\begin{equ}
\sigma_t^{\f 1p} \lesssim \f 1 \eps \int_s^t \Bigl(\|x_u-x_r\|_{2p}^{2}
+ \sigma_r^{\f 1p}\Bigr)\,dr\;. 
\end{equ}
This allows us to apply Gronwall's inequality, yielding
\begin{equ}
\sigma_t^{\f 1p} \lesssim \f{e^{c\f {t-s} \eps}}{\eps}  \int_s^t \|x_r-x_u\|_{2p}^{2}dr
\lesssim \f{|t-s|}{\eps}  \|x\|_{\alpha,2p}^2|t-s|^{2\alpha }\;,
\end{equ}
where we used the bound $|t-s| \le \eps$
to make sure that the exponential factor doesn't cause an explosion.
This is precisely the required bound since we considered $d^{2p}$ rather than $\rho^{p}$.

The bound \eqref{e:boundBDG} is straightforward in Euclidean space using the $x \leftrightarrow y$ 
symmetry of the distance $|x-y|^2$.  If $\CY$ is a compact manifold, $\<\nabla_x \rho (x, y ), v\>=-\<\nabla_y \rho(x,y), \tilde v\>$, where $v, \tilde v$ are tangent vectors at $T_xM$ and $T_yM$ respectively, and are obtained by parallel translations along the geodesic from one to the other.
This holds because we only consider $x,y$ such that their Riemannian distance $\rho(x,y)$ is smaller
than $1/2$ of the injectivity radius.

This means the stochastic differential $dy_t$ and $dY_{s,t}$ can then be compared using the Lipschitz 
continuity assumption on the vector fields, modulo the Stratonovich correction term which can be 
dealt with by the Lipschitz continuity assumption on $\sum_{i=1}^{\hat m}\nabla _{V_i}V_i$.
The same consideration holds for our modified distance function which is of the form $g\circ \rho$ where $g$ is a smooth real-valued function.  
 \end{proof}

We consider the two parameter family of stochastic processes:
\begin{equs}[e:defAmain]
A_{s,t}^\eps &= \int_s^t h(x_s, Y_{s,r})\,dB_r.
\end{equs}
This is for fixed  process $x_{\fat}\in \CB_{\alpha,p}$
and for $Y_{s,r}=\bar \Phi^{x_s}_{s,t}\big(\Phi_{0,s}^x(y_0)\big)$, as in (\ref{e:defYst}). We will also write $y_t$ for the process
obtained by solving \eqref{fast1}, i.e. $y_t= \Phi^x_{0,t}(y_0,\omega)$,  and we recall that both
$y_t$ and  $Y_{s,t}$  depend of course on $\epsilon$.

We have the following conclusion, which holds for any $x_{\fat}\in \CB_{\alpha,p}$ (we 
emphasise that this does in particular include the solution to (\ref{e:equationx}) but we
do not restrict ourselves to that case).

\begin{lemma}\label{prop:equalA}
Assume that  $h \in \BC^1$ and that $x \in \CB_{\alpha,p}$ with $\alpha + H > 1 + {1\over p}$. 
Then for each $\eps$ fixed,
the process $I_t(A^\eps)$ given by  
Lemma~\ref{sewing-lemma} coincides with the Young integral $\int_0^t h (x_s, y_s)\,dB_s$.
\end{lemma}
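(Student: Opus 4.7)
The plan is to compare the sewing construction $I_t(A^\eps)$ with the Young-style germ $\hat A_{s,t} \eqdef h(x_s, y_s)(B_t - B_s)$. For $\eps > 0$ fixed, $x_\fat$ lies in $L^p(\Omega,\CC^{\alpha-1/p-\delta})$ by hypothesis and the Markovian $y_\fat$ driven by $\hat W$ in \eqref{fast1} is $L^p(\Omega,\CC^{1/2-\delta})$ for any $\delta > 0$ by standard BDG estimates, so one can find $\gamma < \tfrac12$ with $\gamma + H > 1$ and conclude that the pathwise Young integral $\int_0^t h(x_s, y_s)\,dB_s$ exists and agrees with $I_t(\hat A)$ by Young's classical bound. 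By linearity of the Riemann-sum limit defining the sewing, it therefore suffices to show $I(\Delta) \equiv 0$, where
\[
\Delta_{s,t} = A^\eps_{s,t} - \hat A_{s,t} = \int_s^t \bigl[h(x_s, Y_{s,r}) - h(x_s, y_s)\bigr]\,dB_r\;.
\]
This will follow from the final clause of Lemma~\ref{sewing-lemma}.

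Checking that $\Delta \in H^p_\eta \cap \bar H^p_{\bar\eta}$ for suitable exponents is routine once one uses the decomposition $B_r - B_s = \bar B^s_r + \tilde B^s_r$ of \eqref{e:decompB}. The Riemann--Stieltjes piece against $\bar B^s$ is controlled pathwise via $\|\dot{\bar B}^s_r\|_p \lesssim (r-s)^{H-1}$ and Hölder's inequality, while the Wiener piece against $\tilde B^s$ is handled by conditioning on $\CF_s \vee \hat\CG_\infty$, relative to which the integrand is deterministic and $\tilde B^s$ is still a centred Gaussian with the same covariance, and then applying Lemma~\ref{lemma-a} with $\kappa = 0$. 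Combined with the SDE estimate $\|Y_{s,r} - y_s\|_p \lesssim \sqrt{(r-s)/\eps}$ on short intervals (a simpler variant of Lemma~\ref{lemma:bounddiffy}) and the compactness of $\CY$ on long ones, this yields bounds for $\Delta$ with exponents strictly above $\tfrac12$ and $1$, so the sewing limit $I_t(\Delta)$ exists.

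The decisive step is the conditional expectation bound
\[
\|\E(\Delta_{s,t}\,|\,\CF_s)\|_p \lesssim \eps^{-1/2}|t-s|^{H+\tfrac12}\;.
\]
The $\tilde B^s$-contribution vanishes: conditioning a second time on $\CF_s \vee \hat \CG_\infty$, the integrand becomes deterministic while $\tilde B^s$ is a centred Gaussian independent of this $\sigma$-algebra, so the Wiener integral has zero conditional mean, and the tower property kills the contribution. For the $\bar B^s$-part, $\dot{\bar B}^s_r$ is $\CG_s$-measurable and factors out; the Markov property of $Y_{s,\fat}$ with frozen first argument then gives
\[
\E(\Delta_{s,t}\,|\,\CF_s) = \int_s^t \dot{\bar B}^s_r \bigl[(\CP^{x_s}_{r-s}h(x_s,\cdot))(y_s) - h(x_s,y_s)\bigr]\,dr\;,
\]
and since $h \in \BC^1$, the bracket is bounded by $\|\partial_y h\|_\infty \, \E|Y_{s,r}-y_s| \lesssim \sqrt{(r-s)/\eps}$. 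Integrating against $(r-s)^{H-1}$ produces the advertised $O(\eps^{-1/2}|t-s|^{H+\tfrac12})$ bound, and since $H + \tfrac12 > 1$ the final clause of Lemma~\ref{sewing-lemma} forces $I(\Delta) \equiv 0$. The main subtlety in the argument is precisely this double-conditioning step used to dispose of the $\tilde B^s$-piece; everything else is bookkeeping within the framework already developed.
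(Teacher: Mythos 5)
Your proposal is correct and follows essentially the same route as the paper: compare $A^\eps$ with the Young germ $\hat A_{s,t}=h(x_s,y_s)(B_t-B_s)$, enlarge the filtration to $\CG_s\vee\sigma(\hat W)$ so that the integrand of $\Delta_{s,t}$ is $\CF_s$-measurable and Lemma~\ref{lemma-a} (with $\kappa=0$) applies, and invoke the final clause of Lemma~\ref{sewing-lemma} to conclude $I(\Delta)\equiv 0$. The one structural difference is that you treat the conditional expectation $\E(\Delta_{s,t}\,|\,\CF_s)$ as the ``decisive step'' and compute it exactly via the Markov semigroup $\CP^{x_s}_{r-s}$; this is a valid refinement, but it is redundant: the crude $L^p$ bound $\|\Delta_{s,t}\|_p\lesssim \eps^{-1/2}|t-s|^{H+1/2+\alpha'}$ obtained from Lemma~\ref{lemma-a} together with the estimate on $\sup_r\rho(y_s,Y_{s,r})$ -- which you need anyway to place $\Delta$ in $\bar H^p_{\bar\eta}$, and which is exactly what the paper proves -- already implies the conditional bound, since conditional expectation is an $L^p$-contraction and $H+\f12>1$.
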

\begin{proof}
Note first that under our assumptions, the process $s \mapsto h (x_s, y_s)$ 
belongs almost surely to $\CC^\beta$ for every $\beta < \bigl(\alpha - {1\over p}\bigr) \wedge \f12$,
so that the Young integral is well-defined for every $B \in \CC^{H-\kappa}$ for
$\kappa$ sufficiently small.

Let $\tilde A_{s,t}^\eps= h (x_s, y_s) (B_t-B_s)$, so that $I_t(\tilde A^\eps)$ coincides with 
that Young integral by \eqref{e:defIA}.
As a consequence of the last statement of Lemma~\ref{sewing-lemma},  
it is sufficient to show that 
\begin{equ}
\big\|A_{s,t}^\epsilon-\tilde A_{s,t}^\epsilon\big\|_p
\lesssim |t-s|^{\bar \eta}\;,
\end{equ}
for some $\bar \eta > 1$.
Note that $\eps > 0$ is fixed, so we are allowed to obtain bounds which diverge
as $\eps \to 0$.
Apart from $x_s$, the evolution of $Y_{s,r}$ has no further dependence on the 
fractional Brownian motion, so
that $h(x_s, y_s)-  h(x_s, Y_{s,r})$ is $\CF_s$-measurable for
$\CF_s = \CG_s \vee \sigma(\hat W)$.
We then use Lemma~\ref{lemma-a} with $\kappa=0$  and $p'<p$, so that 
\begin{equ}[e:bounddiffA]
\Big\| \int_{s}^{t}\big(h(x_s, y_s)-  h(x_s, Y_{s,r}) )\big) \ dB_r \Big\|_{p'}
\le \||h(x_s, y_s)-h(x_s, Y_{s,\fat})|_\infty \|_p \,  |s-t|^H\;.
\end{equ}
%
By the Lipschitz continuity of $h$, we have the bound
\begin{equ}
\||h(x_s, y_s)-h(x_s, Y_{s,\fat})|_\infty \|_p 
\lesssim \Big\|\sup_{r \in [s,t]}\rho(y_s, Y_{s,r}) \Big\|_p\;.
\end{equ}
Since the distance $\rho$ is bounded, it follows from  Lemma~\ref{lemma:bounddiffy}
that, for every $\kappa > 0$, one has the bound 
\begin{equ}
\Big\|\sup_{r \in [s,t]}\rho(y_s, Y_{s,r}) \Big\|_p \lesssim 1 \wedge ( \eps^{-{1\over 2}} |t-s|^{{1\over 2}+\alpha})
\le (\eps^{-1/2}|t-s|^{\f 12 +\alpha})^{1 - \kappa}\;.
\end{equ}
Combining this with \eqref{e:bounddiffA}, the claim follows.
\end{proof}

\subsection{Semi-groups with a parameter: ergodicity and continuity}
\label{sec:semigroup}

Denote by
 $|F|_\Lip$ the best Lipschitz constant of $F$ and set $|F|_\Osc=\sup F-\inf F$. On a  space with bounded radius, $|F|_\Osc \lesssim|F|_\Lip$. 
Recall that $\CP_t^x$ denotes the semigroup associated to \eqref{e:xfrozen}.
 By differentiating the solution flow, a brutal bound on the derivative flow which is the solution to the equation $dv_t=\eps^{-1/2}\nabla _{v_t}V\circ d\hat W_t+\eps^{-1}\nabla_{v_t} V_0\,dt$ (the estimates depend on the covariant derivatives up to order $2$), yields a bound of the type
$|\CP^x_t F|_\Lip \le C e^{Ct/\eps} |F|_\Lip$.  This can be improved using ergodicity, for large time, we will also make use of the smoothing properties of the Markov semigroups~$\CP^x$.

\begin{lemma}\label{lemma-semi-group-0}
Under Assumption~\ref{ass:main}, the following holds.
\begin{enumerate}
\item [(1)]There exist  constants $c, C$ such that for any $x\in \R^d$,
\begin{equs}
|\CP^x_t F|_\Osc &\le C e^{-ct/\eps}  |F|_\Osc\;,\label{osc} \\
\label{e:goodbound}|\CP^x_t F|_\Lip  &\le C e^{-ct/\eps}  |F|_\Lip\;,\\
|\CP^x_t F|_{\BC^2} &\le C \eps t^{-1} e^{-ct/\eps}  |F|_{\infty}\;,
\label{e:boundBC2}
\end{equs}
uniformly over $t\in \R_+$. 
\item [(2)]
For any $x, \bar x \in \R^d$, the bound
\begin{equ}\label{e:telescopic}
|\CP_t^x F - \CP_t^{\bar x}F|_\infty
\le  C |x-\bar x| \,|F|_\Lip\;,
\end{equ}
holds for all $t \ge 0$.
\item[(3)] If $h: \R^d\times \CY\to \R$ is a Lipschitz continuous bounded function with  $\int_\CY h(x,y)\mu^x(dy)=0$ for every $x$, then for any $\kappa\in (0,1)$,
\begin{equ}[e:goodBoundSG]
|\CP_{t}^{x} h(x,\fat) - \CP_{t}^{\bar x} h(\bar x,\fat)|_\infty
\lesssim  |h|_\infty^{\kappa}|h|_\Lip^{1-\kappa} |x-\bar x|^{1-\kappa} e^{-\kappa ct/\eps} \;.
\end{equ}
\end{enumerate}\end{lemma}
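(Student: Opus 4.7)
The proof divides cleanly along the three parts of the statement.

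\textbf{Part (1).} My plan is to absorb $\eps$ via the time change $s=t/\eps$, working with $\tilde \CP_s^x \eqdef \CP_{\eps s}^x$, whose $\eps$-independent generator is $\tilde \CL^x = V_0(x,\fat)+\tfrac12\sum_k V_k(x,\fat)^2$. By Assumption~\ref{ass:main}, $\tilde \CL^x$ is a second-order operator with $\BC^2$ coefficients that is uniformly elliptic in $(x,y)$ on the compact manifold $\CY$. Classical results then yield all three bounds: the uniform spectral gap (obtained e.g.\ by a Doeblin argument from the strictly positive smooth transition density, or equivalently from the Poincaré inequality for $\mu^x$) gives $|\tilde \CP_s^x F-\mu^x(F)|_\infty\lesssim e^{-cs}|F|_\Osc$, hence \eqref{osc}. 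The gradient bound $|\tilde \CP_s^x F|_\Lip\lesssim e^{-cs}|F|_\Lip$ comes from combining a short-time Bismut--Elworthy--Li formula (producing $|\tilde \CP_s^x F|_\Lip\lesssim s^{-1/2}|F|_\infty$ for $s\le 1$) with the spectral gap applied to the mean-zero centring of $F$. Iterating the smoothing once more yields $|\tilde \CP_s^x F|_{\BC^2}\lesssim s^{-1}e^{-cs}|F|_\infty$, and unwinding the rescaling produces \eqref{e:goodbound} and \eqref{e:boundBC2}.

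\textbf{Part (2).} Since every $\CP_t^x$ fixes constants, I replace $F$ by $F-\mu^{\bar x}(F)$, which leaves the left-hand side unchanged and, thanks to the bounded diameter of $\CY$, ensures $|F|_\infty\lesssim|F|_\Lip$. The difference $D_t \eqdef \CP_t^xF-\CP_t^{\bar x}F$ solves $\partial_t D_t=\CL^{\bar x}D_t+(\CL^x-\CL^{\bar x})\CP_t^x F$ with $D_0=0$, so variation of constants gives
\begin{equ}\label{e:duhamel}
\CP_t^xF-\CP_t^{\bar x}F=\int_0^t \CP_{t-s}^{\bar x}\bigl[(\CL^x-\CL^{\bar x})\CP_s^x F\bigr]\,ds\;.
\end{equ}
In local coordinates, $\CL^x-\CL^{\bar x}=\eps^{-1}\bigl[\Delta b^\alpha\partial_\alpha+\Delta a^{\alpha\beta}\partial_\alpha\partial_\beta\bigr]$ with $\Delta b,\Delta a\in\BC^1$ and sup norm $\lesssim|x-\bar x|$. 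The first-order contribution is immediate from part~(1): its integrand is of size $\eps^{-1}|x-\bar x|e^{-cs/\eps}|F|_\Lip$, whose $s$-integral gives $\lesssim|x-\bar x||F|_\Lip$. The second-order contribution is the main difficulty, since the direct bound $|\partial^2\CP_s^xF|_\infty\lesssim\eps s^{-1}e^{-cs/\eps}|F|_\infty$ is not integrable at $s=0$. My remedy is to integrate by parts inside $\CP_{t-s}^{\bar x}$, transferring one derivative onto the transition kernel: using the classical heat-kernel gradient estimate $\int|\nabla_z p_{t-s}^{\bar x}(y,z)|\,dz\lesssim\sqrt{\eps/(t-s)}$, the integrand is then controlled by $\bigl[\eps^{-1}+\eps^{-1/2}(t-s)^{-1/2}\bigr]|x-\bar x|e^{-cs/\eps}|F|_\Lip$, both of whose time integrals are uniformly $\lesssim|x-\bar x||F|_\Lip$.

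\textbf{Part (3).} This is an interpolation between (1) and (2). Since $\mu^x(h(x,\fat))=0$, \eqref{osc} and the triangle inequality give $|\CP_t^xh(x,\fat)-\CP_t^{\bar x}h(\bar x,\fat)|_\infty\lesssim e^{-ct/\eps}|h|_\infty$. Separately, decomposing
\begin{equ}
\CP_t^xh(x,\fat)-\CP_t^{\bar x}h(\bar x,\fat)=(\CP_t^x-\CP_t^{\bar x})h(x,\fat)+\CP_t^{\bar x}\bigl[h(x,\fat)-h(\bar x,\fat)\bigr]
\end{equ}
and applying part~(2) to the first summand together with the sup-norm contractivity of $\CP_t^{\bar x}$ to the second yields $\lesssim|x-\bar x||h|_\Lip$. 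The elementary bound $\min(a,b)\le a^\kappa b^{1-\kappa}$ then delivers exactly \eqref{e:goodBoundSG}. The chief obstacle is the integration-by-parts step of part~(2), where one must trade the non-integrable singularity $\eps/s$ from the $\BC^2$ estimate for the integrable $\sqrt{\eps/(t-s)}$ from the heat-kernel gradient bound; the asymmetric placement of the smoothing semigroup $\CP_{t-s}^{\bar x}$ on the outside of \eqref{e:duhamel} is what makes this exchange available for every $s\in(0,t)$ and thereby produces the $t$-uniform estimate.
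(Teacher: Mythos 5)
Your proposal is correct. For parts (1) and (3) it follows essentially the paper's own route: short-time smoothing of Bismut--Elworthy--Li type combined with a Doeblin contraction of the oscillation norm for part (1), and the interpolation $\min(a,b)\le a^\kappa b^{1-\kappa}$ between the ergodic decay of $|\CP_t^x h(x,\fat)|_\infty$ and the Lipschitz-in-$x$ estimate for part (3). Part (2) is where you genuinely diverge. The paper first proves the crude short-time bound $|\CP_t^x F-\CP_t^{\bar x}F|_\infty\le Ce^{Ct/\eps}|x-\bar x|\,|F|_\Lip$ by comparing the two stochastic flows driven by the \emph{same} Wiener process, and then removes the exponential growth by telescoping over a partition of $[0,t]$ into intervals of length of order $\eps$: each summand is the short-time bound sandwiched between an $L^\infty$-contraction on the left and the decaying Lipschitz bound \eqref{e:goodbound} on the right, and the resulting geometric series sums to a constant. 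You instead work at the level of generators via Duhamel's formula, which forces you to confront the non-integrable $s^{-1}$ singularity produced by the second-order part of $\CL^x-\CL^{\bar x}$ acting on $\CP_s^xF$; your fix --- integrating by parts against the kernel of $\CP_{t-s}^{\bar x}$ and using a heat-kernel gradient estimate --- is sound and yields an integrand whose time integral is uniformly bounded. The paper's argument buys freedom from any PDE input beyond part (1) and a pathwise flow comparison, and avoids having to justify the Duhamel identity for merely Lipschitz $F$ (in your approach $\CP_s^xF$ is only $\BC^2$ for $s>0$, so the identity needs an approximation argument); your argument buys independence from the synchronous coupling of the two diffusions. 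Two small points to tidy: the kernel gradient bound should read $\int|\nabla_z p_{t-s}^{\bar x}(y,z)|\,dz\lesssim 1\vee\sqrt{\eps/(t-s)}$, since the decay saturates once $t-s\gtrsim\eps$ --- your displayed integrand $\eps^{-1}+\eps^{-1/2}(t-s)^{-1/2}$ already accounts for this --- and the Lipschitz-in-$x$ control of the coefficients of $\CL^x-\CL^{\bar x}$ and of their first $y$-derivatives does follow from Assumption~\ref{ass:main}, but deserves a sentence because the drift contains the Stratonovich correction $\f12\sum_i\nabla_{V_i}V_i$.
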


\begin{proof}
It follows from standard estimates, see for example \cite{BEL-Elworthy-Li}, that  
for any $t\in (0,\eps)$, one has the bounds
\begin{equ}[e:smoothing]
|\CP^x_t F|_\Lip \le C(1 \vee (t/\eps)^{-1/2})  |F|_\Osc\;,\qquad 
|\CP^x_t F|_{\BC^2} \le C(1 \vee (t/\eps)^{-1})  |F|_\infty\;,
\end{equ}
where the constant $C$ only depends on derivatives of $V_0$ up to order $2$ and of the 
remaining $V_k$ up to order $3$. 
The reason why one obtains the oscillation norm on the right hand side of the first 
bound is that $|F|_\Lip$ does not change under constant shifts and 
$|F|_\Osc=\inf_{c \in \R} 2 |F-c|_\infty$. In fact (\ref{e:smoothing}) holds for all $t$, but we do not need it.

It furthermore follows from the uniform positive lower bounds on the heat kernel  
(see \cite{Aronson} for the case of $\R^n$ and for example \cite{Cheeger-Yau-81,Qian} for versions 
that apply to manifolds, see also \cite{FengYu})
that $|\CP_\eps^x F(y_1) -\CP_\eps^x F(y_2)|\le (1-\lambda) |F|_\Osc$ for some constant $\lambda>0$, 
so that \eqref{osc} follows by iterating this bound (Doeblin's condition).

Using this  last inequality for time $t-\epsilon$ and  (\ref{e:smoothing})
for the remaining time $\eps$, 
we obtain for $t \ge \eps$ the bound 
$$|\CP_t^x F|_\Lip=|\CP_{\epsilon }^x \CP_{(t-\epsilon)}^xF|_\Lip \le C e^{-ct/\eps}  |F|_\Lip.$$
For $t \le \eps$ on the other hand, this bound follows from $L^p$ bounds on the
Jacobian, so that \eqref{e:goodbound} holds. The bound \eqref{e:boundBC2} follows in the same way.

It is also rather straightforward to verify that 
\begin{equ}[e:badbound]
|\CP_t^x F - \CP_t^{\bar x}F|_\infty \le Ce^{Ct/\eps} |x-\bar x| \,|F|_\Lip \;.
\end{equ}
While this bound is good for $t \le \eps$, it can be significantly improved for $t > \eps$. 
Indeed, for $t \ge \eps$ and any partition $\Delta$ of $[0,t]$ into subintervals of size at most $\eps$ and at least $\eps/2$, we have
\begin{equs}
|\CP_t^x F - \CP_t^{\bar x}F|_\infty
&\le \sum_{[s,u] \in \Delta}
|\CP_{t-u}^x(\CP_{u-s}^x - \CP_{u-s}^{\bar x})\CP^{\bar x}_{s} F|_\infty \\
&\lesssim \sum_{[s,u] \in \Delta} e^{-cs/\eps} |x-\bar x| \,|F|_\Lip
\le  
C |x-\bar x| \,|F|_\Lip\;.
\end{equs}
Here we used \eref{e:goodbound}, the small time bound \eref{e:badbound},
and the fact that $\CP_{t-u}^x$ is a contraction in $L^\infty$.

For the last bound we make use of the fact that, since the integral of $h(x)$
against the invariant measure for $\CP_{t}^{x}$ vanishes, its supremum norm is controlled by its oscillation and vice versa, so that
(\ref{osc}) yields the bound
\begin{equ}
|\CP_{t}^{x} h(x,\fat)|_{\infty} \le C e^{-ct/\eps} |h|_\infty\;.
\end{equ}
Combining this with (\ref{e:telescopic}), we see that
\begin{equs}
|\CP_{t}^{x} h(x,\fat) - \CP_{t}^{\bar x} h(\bar x,\fat)|_\infty
&\lesssim  \inf\{|h|_\infty e^{-ct/\eps} , |x-\bar x|\,|h|_\Lip\} \\
&\lesssim |h|_\infty^{\kappa}|h|_\Lip^{1-\kappa} |x-\bar x|^{1-\kappa} e^{-\kappa ct/\eps} \;,
\end{equs}
completing the proof.
\end{proof}

\begin{lemma}\label{lem:boundNegHolderGen}
Fix $\bar x \in \R^d$ and $y \in \CY$ and write $y_t = \bar \Phi_{0,t}^{\bar x}(y)$.
Fix $p \ge 2$ and $F \colon \CY \to \R$ bounded measurable, and write
$\bar F (\bar x)= \int F(y)\,\mu^{\bar x}(dy)$.
Then, one has the bound
\begin{equ}[e:boundErgodic]
\Big\|\int_0^t \big(F(y_r) - \bar F(x)\big)\,dr\Big\|_p
\lesssim \eps^{1 \over p}\, t^{1-{1\over p}}|F|_\Osc\;,
\end{equ}
uniformly over $\bar x \in \R^d$, $y \in \CY$, and $t \ge 0$.
\end{lemma}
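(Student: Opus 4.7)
The plan is to apply the stochastic sewing lemma to the two-parameter process
$$A_{s,t} \eqdef \int_s^t (\CP^{\bar x}_{r-s}G)(y_s)\,dr\;,\qquad G(y) \eqdef F(y) - \bar F(\bar x)\;,$$
which is $\CF_s$-measurable since $y_s\in\CF_s$ and $\CP^{\bar x}$ is deterministic. By construction $\int G\,d\mu^{\bar x}=0$; combined with the invariance of $\mu^{\bar x}$ and the fact that it has full support (a consequence of the uniform ellipticity in Assumption~\ref{ass:main}), this yields $|\CP^{\bar x}_u G|_\infty \le |\CP^{\bar x}_u G|_\Osc$, so the oscillation decay \eqref{osc} gives $|\CP^{\bar x}_u G|_\infty \lesssim e^{-cu/\eps}|F|_\Osc$. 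Integrating in $r$,
$$\|A_{s,t}\|_p \lesssim \min(|t-s|,\eps)\,|F|_\Osc \lesssim \eps^{1/p}|t-s|^{1-1/p}|F|_\Osc\;.$$
Moreover, the Markov property of the frozen fast dynamics and the semigroup identity $\CP^{\bar x}_{u-s}\CP^{\bar x}_{r-u}=\CP^{\bar x}_{r-s}$ give $\E[(\CP^{\bar x}_{r-u}G)(y_u)\mid\CF_s]=(\CP^{\bar x}_{r-s}G)(y_s)$, from which
$$\E[\delta A_{sut}\mid\CF_s] = \int_u^t\big[(\CP^{\bar x}_{r-s}G)(y_s)-\E((\CP^{\bar x}_{r-u}G)(y_u)\mid\CF_s)\big]\,dr = 0\;,$$
so $\$A\$_{\bar\eta,p}=0$ for every $\bar\eta>1$.

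For $p>2$ I would then apply Lemma~\ref{sewing-lemma} with $\eta=1-\f1p>\f12$ and any $\bar\eta>1$, producing $I_{s,t}(A)$ with $\|I_{0,t}(A)\|_p\lesssim\eps^{1/p}t^{1-1/p}|F|_\Osc$. To identify the sewn process with the desired integral, introduce $B_{s,t}\eqdef\int_s^t G(y_r)\,dr$, which satisfies $\delta B\equiv 0$ so that its Riemann sums equal $B_{0,t}$ identically. Since $\E[B_{s,t}\mid\CF_s]=\int_s^t(\CP^{\bar x}_{r-s}G)(y_s)\,dr=A_{s,t}$, the difference $\hat A = A-B$ satisfies $\E[\hat A_{s,t}\mid\CF_s]=0$ and lies in $H^p_\eta\cap\bar H^p_{\bar\eta}$. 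The final statement of Lemma~\ref{sewing-lemma} applied to $\hat A$ forces $I(\hat A)\equiv 0$, hence $I_{0,t}(A)=B_{0,t}=\int_0^t G(y_r)\,dr$, which delivers the announced bound for $p>2$.

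The borderline case $p=2$ corresponds to $\eta=\f12$, which is excluded from the sewing lemma, so I would treat it by a direct second-moment computation: conditioning on $\CF_u$ for $u\le r$ gives $\E[G(y_r)G(y_u)] = \E[G(y_u)(\CP^{\bar x}_{r-u}G)(y_u)]$, whence
$$\Big\|\int_0^t G(y_r)\,dr\Big\|_2^2 = 2\int_0^t\!\!\int_0^r \E[G(y_u)(\CP^{\bar x}_{r-u}G)(y_u)]\,du\,dr \lesssim |F|_\Osc^2\int_0^t\!\!\int_0^r e^{-c(r-u)/\eps}\,du\,dr \lesssim \eps t\,|F|_\Osc^2\;,$$
which gives the claim. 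The only real obstacle is the identification step $I_{0,t}(A)=\int_0^t G(y_r)\,dr$; this hinges on the observation that $A_{s,t}$ is precisely the $\CF_s$-conditional expectation of $B_{s,t}$, so that the vanishing statement at the end of Lemma~\ref{sewing-lemma} applies to $\hat A$. Everything else is a routine combination of the Markov property and the exponential mixing estimate \eqref{osc}.
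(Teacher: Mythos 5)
Your proof is correct, and for $p=2$ it coincides exactly with the paper's argument: condition on $\CF_u$, use the Markov property of the frozen dynamics to produce $\CP^{\bar x}_{r-u}$, and bound its output in supremum norm by $e^{-c(r-u)/\eps}|F|_\Osc$ via the oscillation decay \eqref{osc}. (Your appeal to full support of $\mu^{\bar x}$ for passing from oscillation to supremum is superfluous: $\int \CP^{\bar x}_u G\,d\mu^{\bar x}=0$ against a probability measure already forces $\inf \CP^{\bar x}_u G\le 0\le\sup \CP^{\bar x}_u G$.) Where you genuinely diverge is the extension to $p>2$. The paper simply notes the almost sure bound $\bigl|\int_0^t(F(y_r)-\bar F(\bar x))\,dr\bigr|\le 2t|F|_\Osc$ and interpolates: writing $X$ for the integral, $\|X\|_p^p\le\|X\|_\infty^{p-2}\,\|X\|_2^2\lesssim t^{p-2}\cdot\eps t\,|F|_\Osc^p$, which is the claimed bound in two lines. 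You instead run the stochastic sewing lemma on $A_{s,t}=\int_s^t(\CP^{\bar x}_{r-s}G)(y_s)\,dr$, check that $\E(\delta A_{sut}\,|\,\CF_s)=0$ and $\|A_{s,t}\|_p\lesssim\min(|t-s|,\eps)\,|F|_\Osc\le\eps^{1/p}|t-s|^{1-1/p}|F|_\Osc$, and then identify $I(A)$ with $\int_0^t G(y_r)\,dr$ by applying the vanishing criterion at the end of Lemma~\ref{sewing-lemma} to $A-B$ with $B_{s,t}=\int_s^t G(y_r)\,dr$. All of these steps check out, including the identification, which is the genuinely delicate part and which you handle correctly by observing that $A_{s,t}=\E(B_{s,t}\,|\,\CF_s)$. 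The sewing route is in the spirit of the rest of Section~4, but it buys nothing here over interpolation, it requires a (minor) continuity check on $(s,t)\mapsto A_{s,t}$, and it forces you to treat $p=2$ separately because $\eta=1-\f1p$ degenerates to the excluded value $\f12$; the paper's interpolation covers all $p\ge 2$ at once from the $p=2$ case.
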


\begin{proof}
Since $\inf F \le \bar F(x)\le \sup F$, one has the almost sure bound 
\begin{equ}
\Big|\int_0^t \big(F(y_r) - \bar F(x)\big)\,dr\Big| \le 2 t|F|_\Osc \;,
\end{equ}
so that the general case of \eqref{e:boundErgodic} follows from the case $p=2$ by interpolation. 

For $p=2$, we write $\tilde F(y) = F(y) - \bar F(\bar x)$. With this notation, we have the identity
\begin{equ}
\E\Big(\int_0^t \tilde F(y_s)\,ds\Big)^2 =
2\int_0^t\int_0^r \bigl(\CP_{s}^{\bar x}(\tilde F \cdot \CP_{r-s}^{\bar x} \tilde F) \bigr)(y)\,ds\,dr\;.
\end{equ}
By Lemma~\ref{lemma-semi-group-0}, we can bound the supremum of 
$\CP_{r-s}^{\bar x} \tilde F$ by $Ce^{-c|r-s|/\eps}|\tilde F|_\Osc$,
giving the bound 
\begin{equ}
\E\Big(\int_0^t \tilde F(y_s)\,ds\Big)^2 
\le C|\tilde F|_\Osc^2 \int_0^t\int_0^r e^{-\f{c|s-r|}\eps} \,ds\,dr\;.
\end{equ}
Since this integral is bounded by a multiple of $\eps t$
and since $|\tilde F|_\Osc = |F|_\Osc$, the claim follows.
\end{proof}

\begin{corollary}\label{lem:boundNegHolder}
We fix an adapted stochastic process $x_t$ with values in $\R^d$ and
 $y_t$ with values in $\CY$. Let $h: \R^d\times \CY\to \R$ be bounded measurable, 
 set $Y_{s,t}=\bar \Phi_{s,t}^{x_s}(y_s)$, and set $\bar h(x) = \int h(x,y)\mu^x(dy)$.
For $p \ge 2$ and $s\le u \le t$, one has the bound 
\begin{equ}
\Big\|\int_u^t \big(h(x_s, Y_{s,r}) - \bar h(x_s)\big)\,dr\Big\|_p
\lesssim |h|_\Osc \, \eps^{1 \over p}\, |t-u|^{1-{1\over p}}\;,
\end{equ}
uniformly over $\eps \in (0,1]$, and over the processes $x_t$ and $y_t$.
\end{corollary}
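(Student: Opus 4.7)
The plan is to reduce the corollary to Lemma~\ref{lem:boundNegHolderGen} by conditioning on $\CF_u$ and invoking the Markov property of the frozen-coefficient flow $\bar\Phi^{x_s}_{s,\cdot}$ together with the time-homogeneity of \eqref{e:xfrozen}. The key feature that makes this reduction clean is that the bound in Lemma~\ref{lem:boundNegHolderGen} is uniform in the frozen parameter $\bar x$ and the starting point $y$.

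The first step is to check the measurability needed for the conditioning: $Y_{s,u}$ is $\CF_u$-measurable, since $x_s$ and $y_s$ are $\CF_s\subset \CF_u$-measurable and the path $\{\hat W_r\}_{s\le r\le u}$ is $\hat\CG_u$-measurable; for $r\ge u$, the flow identity $Y_{s,r}=\bar\Phi^{x_s}_{u,r}(Y_{s,u})$ exhibits $Y_{s,r}$ as a functional of $\CF_u$-measurable data and of the increments $\{\hat W_r-\hat W_u\}_{r\ge u}$, which are independent of $\CF_u$ because $\hat W$ is independent of $B$ and hence of $\CG$. Combined with the time-homogeneity of \eqref{e:xfrozen}, this tells us that, conditional on $\CF_u$, on the event $\{x_s=\bar x,\,Y_{s,u}=y'\}$, the process $(Y_{s,u+r'})_{r'\ge 0}$ has the same law as $(\bar\Phi^{\bar x}_{0,r'}(y'))_{r'\ge 0}$.

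Next, setting $F_{\bar x}(y')\eqdef h(\bar x,y')$, one has $|F_{\bar x}|_\Osc\le |h|_\Osc$ and $\int F_{\bar x}\,d\mu^{\bar x}=\bar h(\bar x)$. After the change of variable $r'=r-u$, the preceding step yields
\begin{equ}
\E\Big(\Big|\int_u^t\bigl(h(x_s,Y_{s,r})-\bar h(x_s)\bigr)\,dr\Big|^p\,\Big|\,\CF_u\Big) = \Psi(x_s,Y_{s,u})\;,
\end{equ}
where $\Psi(\bar x,y')$ denotes the unconditional $p$-th moment of $\int_0^{t-u}\bigl(F_{\bar x}(\bar\Phi^{\bar x}_{0,r'}(y'))-\bar F_{\bar x}(\bar x)\bigr)\,dr'$. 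Applying Lemma~\ref{lem:boundNegHolderGen} with horizon $t-u$ bounds $\Psi(\bar x,y')$ by a constant multiple of $\eps(t-u)^{p-1}|h|_\Osc^p$, uniformly in $(\bar x,y')$. Taking the outer expectation and then the $p$-th root gives the stated estimate.

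The only substantive point is the measurability/Markov bookkeeping in the second paragraph: one has to verify that dropping $x_s$ into the frozen parameter and $Y_{s,u}$ into the initial condition really does turn the conditional law of the post-$u$ segment of $Y_{s,\cdot}$ into that of the autonomous frozen-coefficient process. Once this is in place, no fresh analytic work is required, since the ergodic decay of $\CP^{\bar x}_t$ from Lemma~\ref{lemma-semi-group-0} has already been harvested in the proof of Lemma~\ref{lem:boundNegHolderGen}.
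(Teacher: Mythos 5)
Your proposal is correct and follows essentially the same route as the paper: condition on $\CF_u$, use the flow identity $Y_{s,r}=\bar\Phi^{x_s}_{u,r}(Y_{s,u})$ together with independence of the post-$u$ increments of $\hat W$ from $\CF_u$, and apply Lemma~\ref{lem:boundNegHolderGen} with $\bar x=x_s$, $y=Y_{s,u}$, $F=h(x_s,\fat)$, exploiting the uniformity of that bound in $(\bar x,y)$. Your explicit handling of the $p$-th powers before taking the outer expectation is if anything slightly more careful than the paper's write-up.
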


\begin{proof}
Applying Lemma~\ref{lem:boundNegHolderGen} with $\bar x = x_s$, $y = Y_{s,u}\equiv \bar \Phi_{s,u}^{x_s}(y_s)$ and
$F = h(x_s,\fat)$, we then obtain  
\begin{equ}
\E\left( \left| \int_u^t \big(h(x_s, \bar \Phi_{u,r}^{x_s} (Y_{s,u})) - \bar h(x_s)\big)\,dr\right|^p \, \Big |\,\CF_u\right) \lesssim |h(x_s, \fat)|_\Osc \, \eps^{1 \over p}\, |t-u|^{1-{1\over p}}\;.
\end{equ}
Since this bound is uniform over
$y$ and since $|h(\bar x, \fat)|_\Osc\le |h|_\Osc$ for every $\bar x$, the claim follows.
\end{proof}

\subsection{Regularity of limiting drift}
\label{sec:regIM}

In this section, we show that Assumption~\ref{ass:main} guarantees that the
limiting functions $\bar f$ and $\bar g$ do again belong to $\BC^2$.
Throughout Section~\ref{sec:regIM} (and only here), we write $\CP_t^x$ for the semigroup generated
by \eqref{e:xfrozen}, but with $\eps = 1$, i.e.\ we work on the fast timescale.
The reason why we can do this is that we are only interested in showing a result about the
invariant measures, and these do not depend on $\eps$.
We also write $\bar \Phi^x_t$ for the corresponding flow map, so that 
$\bigl(\CP_t^x F\bigr)(y) = \E F(\bar \Phi^x_t(y))$.
The main ingredient for the proof is the following claim.

\begin{lemma}\label{lem:smoothPtx}
Under Assumption~\ref{ass:main}, for any fixed $\tau > 0$,
the map $x \mapsto \CP_{ \tau}^x$ is differentiable, uniformly in $x$,
as a map $\R^d \to L(\BC^2,\BC^1)$ and as a map $\R^d \to L(\BC^1,\BC^0)$.
It is also twice differentiable, uniformly in $x$,
as a map $\R^d \to L(\BC^2,\BC^0)$.
\end{lemma}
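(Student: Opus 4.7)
The plan is to use the representation $\CP_\tau^x F(y) = \E F(\bar \Phi_\tau^x(y))$ and differentiate directly under the expectation via the variation equations for the flow. First I would establish that the flow map $(x,y) \mapsto \bar \Phi_t^x(y)$ is twice continuously differentiable in $x$, with derivatives satisfying linear SDEs obtained by formal differentiation of \eqref{e:xfrozen}. Working in local coordinate charts on the compact manifold $\CY$, the first variation $J_t^{(1)} \eqdef \partial_x \bar \Phi_t^x(y)$ and second variation $J_t^{(2)} \eqdef \partial_x^2 \bar \Phi_t^x(y)$ have coefficients involving the $x$-derivatives of $V_0$ and of the Stratonovich-to-It\^o correction $\sum_i \nabla_{V_i} V_i$, which is precisely where the regularity $V_0 \in \BC^2$ and $V_k \in \BC^3$ in Assumption~\ref{ass:main} enters. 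A BDG/Gr\"onwall estimate then yields, for any $p \ge 1$ and the fixed $\tau$,
\begin{equ}
\sup_{x \in \R^d,\,y \in \CY,\,t \in [0,\tau]} \E \bigl|J_t^{(k)}(x,y)\bigr|^p \lesssim 1\;, \qquad k=1,2\;,
\end{equ}
and analogous uniform moment bounds hold for the first and second $y$-derivatives of the flow.

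Next I would apply the chain rule. For $F \in \BC^1$, differentiating $\CP_\tau^x F(y)$ in $x$ under the expectation yields
\begin{equ}
\partial_x \bigl(\CP_\tau^x F\bigr)(y) = \E\bigl[\nabla F(\bar \Phi_\tau^x(y)) \cdot J_\tau^{(1)}(x,y)\bigr]\;,
\end{equ}
uniformly bounded by $|F|_{\BC^1}$, proving the mapping into $L(\BC^1, \BC^0)$. For $F \in \BC^2$, further differentiating in $y$ produces additional terms pairing $\nabla^2 F$ with $\nabla_y \bar \Phi_\tau^x \otimes J_\tau^{(1)}$ and $\nabla F$ with $\nabla_y J_\tau^{(1)}$, each controlled by $|F|_{\BC^2}$ times the moment bounds above, giving the mapping into $L(\BC^2, \BC^1)$. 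Differentiating once more in $x$ produces
\begin{equ}
\partial_x^2 \bigl(\CP_\tau^x F\bigr)(y) = \E\bigl[\nabla^2 F(\bar \Phi_\tau^x(y)) \cdot J_\tau^{(1)} \otimes J_\tau^{(1)} + \nabla F(\bar \Phi_\tau^x(y)) \cdot J_\tau^{(2)}\bigr]\;,
\end{equ}
uniformly bounded by $|F|_{\BC^2}$, yielding the mapping into $L(\BC^2, \BC^0)$. Uniformity in $x$ follows from the uniformity of the moment bounds; the Taylor remainders are controlled by one order higher than the derivative computed, and the last expression above already lies at the limit of available regularity.

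The main obstacle I expect is the intrinsic handling of higher variations on a manifold, since $J_t^{(k)}$ for $k \ge 2$ does not live in a fixed linear space but has to be interpreted via connections on appropriate tensor bundles. However, because $\CY$ is compact, a finite coordinate atlas together with the fixed time horizon $\tau$ reduces everything to bounded-coefficient linear SDEs in Euclidean charts, for which moment bounds are standard and uniform in $x, y$. The regularity counting is also delicate: one must verify that the Stratonovich-to-It\^o conversion in the variation equations does not consume more derivatives than are available, which is exactly why the assumption $V_k \in \BC^3$ rather than $\BC^2$ is needed for the second variation step.
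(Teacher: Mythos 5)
Your proposal is correct in substance, but it reconstructs from scratch what the paper obtains by reduction and citation. The paper's proof consists of a single observation: define the semigroup $\bar \CP_t$ on the product space $\R^d \times \CY$ by $\bigl(\bar \CP_t F\bigr)(x,y) = \E F(x,\bar \Phi^x_t(y))$, i.e.\ view the frozen parameter $x$ as an extra (non-moving) coordinate of a degenerate stochastic flow, and then invoke known results (\cite[Prop.~A8]{BEL-Elworthy-Li}, \cite{Li-p-complete}, \cite{Kunita-book}) asserting that $\bar \CP_t$ preserves $\BC^k$ for $k \in \{0,1,2\}$; joint $\BC^k$ regularity of $(x,y) \mapsto \CP^x_\tau F(y)$ then yields exactly the three operator-norm differentiability statements. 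Your variation-equation argument --- first and second variations $J^{(1)}, J^{(2)}$ in $x$, localisation in charts by compactness of $\CY$, BDG/Gr\"onwall moment bounds, chain rule under the expectation, and the bookkeeping of where $V_0 \in \BC^2$ and $V_k \in \BC^3$ enter through the Stratonovich correction --- is precisely the machinery behind those citations, so the mathematics is the same; what the paper's route buys is brevity and the avoidance of chart-by-chart manipulation of higher variations on the manifold, while your route buys self-containedness. One point deserves more care than your sketch gives it: at the top order (the second $x$-derivative acting on $F \in \BC^2$) there is, as you note, no spare regularity with which to control a Taylor remainder, so boundedness of the formal derivative $\E\bigl[\nabla^2 F \cdot J^{(1)} \otimes J^{(1)} + \nabla F \cdot J^{(2)}\bigr]$ does not by itself give differentiability. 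The standard repair is to show this candidate is \emph{continuous} in $x$ --- which works here because $\CY$ is compact, so $\nabla^2 F$ is uniformly continuous and the flow and its variations depend continuously on $x$ in $L^p$ --- and that the first derivative is its integral; you should state this step rather than appeal to ``one order higher'' control, which is unavailable exactly where it matters.
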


\begin{proof}
Consider the semigroup $\bar \CP_t$ on $\R^d \times \CY$ given by
\begin{equ}
\bigl(\bar \CP_t F\bigr)(x,y) = \E F(x,\bar \Phi^x_t(y))\;.
\end{equ}
By \cite[Prop.~A8]{BEL-Elworthy-Li} and \cite{Li-p-complete} (see also \cite{Kunita-book}
for related results), 
$\bar \CP_t$ maps $\BC^k$ into itself for
$k \in \{0,1,2\}$, and the claim follows.
\end{proof}

\begin{lemma}\label{lem:regularfbar}
Under Assumption~\ref{ass:main}, the map $\bar f(x) = \int_\CY f(x,y)\,\mu^x(dy)$ belongs to~$\BC^2$.
\end{lemma}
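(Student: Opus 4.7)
The plan is to realize $\bar f$ as the uniform limit of a sequence of $\BC^2$-approximations and to upgrade the convergence to $\BC^2$ via a telescoping argument. Fix $y_0 \in \CY$ arbitrarily and set $g_T(x) := \CP_T^x f(x,\cdot)(y_0)$. By Lemma~\ref{lem:smoothPtx}, each $g_T$ belongs to $\BC^2(\R^d)$. The invariance of $\mu^x$ under $\CP_T^x$ gives $\bar f(x) = \int_\CY \CP_T^x f(x,\cdot)(y)\,\mu^x(dy)$, so by the Lipschitz ergodicity bound \eqref{e:goodbound} of Lemma~\ref{lemma-semi-group-0} (applied with $\eps = 1$),
\begin{equ}
|g_T(x) - \bar f(x)| = \Big|\int_\CY \bigl(\CP_T^x f(x,\cdot)(y_0) - \CP_T^x f(x,\cdot)(y)\bigr)\,\mu^x(dy)\Big| \lesssim e^{-cT}\;,
\end{equ}
so $g_T \to \bar f$ uniformly on $\R^d$ at exponential rate.

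It then suffices to show that $\{g_T\}_T$ is Cauchy in $\BC^2(\R^d)$, which I do via a telescoping decomposition. Setting $h(x,y) := \CP_1^x f(x,\cdot)(y) - f(x,y)$, the invariance of $\mu^x$ yields $\int_\CY h(x,y)\,\mu^x(dy) = 0$ for every $x$, and Lemma~\ref{lem:smoothPtx} together with $f \in \BC^2$ guarantees that $h$ is jointly of class $\BC^2$ in $(x,y)$. Since $g_{n+1}(x) - g_n(x) = \CP_n^x h(x,\cdot)(y_0)$, Cauchyness in $\BC^2$ reduces to the summable bound
\begin{equ}
|\CP_n^x h(x,\cdot)(y_0)|_{\BC^2(\R^d)} \lesssim e^{-cn}\;,\qquad n \ge 1\;,
\end{equ}
uniformly in $y_0 \in \CY$. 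For this, I decompose $\CP_n^x = \CP_1^x \CP_{n-1}^x$ and set $\psi_n(x,\cdot) := \CP_{n-1}^x h(x,\cdot)$, which is mean-zero against $\mu^x$ since $\mu^x$ is invariant under $\CP^x$. Writing $\psi_n(x,\cdot) = \CP_1^x \CP_{n-2}^x h(x,\cdot)$ and combining the oscillation bound \eqref{osc} with the smoothing bound \eqref{e:boundBC2} (both with $\eps = 1$), one gets $|\psi_n(x,\cdot)|_{\BC^2(\CY)} \lesssim e^{-cn}$ uniformly in $x$. Lemma~\ref{lem:smoothPtx} then yields the $\BC^2$-regularity in $x$ of $\CP_1^x \psi_n(x,\cdot)(y_0)$ via the chain rule, using the operator norm bounds of $D_x \CP_1^x \colon \BC^2 \to \BC^1$, $\BC^1 \to \BC^0$ and $D_x^2 \CP_1^x \colon \BC^2 \to \BC^0$, transferring the exponential smallness of $\psi_n$ into the desired estimate.

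The main obstacle will be to control, in the application of Lemma~\ref{lem:smoothPtx}, the cross-terms of the chain rule in which the $x$-derivatives fall on the integrand $\psi_n(x,\cdot)$ rather than on the operator $\CP_1^x$. These terms require bounds on $D_x^k \psi_n$ (for $k=1,2$) in appropriate $\BC^j(\CY)$-norms and not merely on $\psi_n$ itself. They may be obtained by iterating the same decomposition, or more compactly by exploiting the joint $\BC^2$-regularity of the augmented semigroup $\bar \CP_t F(x,y) := \CP_t^x F(x,\cdot)(y)$ on $\BC^2(\R^d \times \CY)$ combined with ergodicity applied to the mean-zero function $h$; the resulting exponential decay then combines with Lemma~\ref{lem:smoothPtx} to close the argument.
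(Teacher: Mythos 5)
Your reduction is sound up to and including the decay of the $\CY$-norms of $\psi_n$: the uniform convergence $g_T\to\bar f$, the telescoping $g_{n+1}-g_n=\CP_n^x h(x,\fat)(y_0)$ with $h$ mean-zero, and the bound $|\psi_n(x,\fat)|_{\BC^2(\CY)}\lesssim e^{-cn}$ all go through. The gap is precisely the obstacle you flag at the end, and neither of your two proposed resolutions closes it. Writing $u_n(x,\fat)=\CP_n^x h(x,\fat)$ and iterating the chain rule gives
\begin{equ}
D_x u_n = \sum_{k=1}^n \CP_{n-k}^x\,(D_x\CP_1^x)\,u_{k-1} \;+\; \CP_n^x\, D_xh(x,\fat)\;.
\end{equ}
Each summand is $O(e^{-ck})$ in sup norm (since $D_x\CP_1^x$ maps $\BC^1$ to $\BC^0$ and $|u_{k-1}|_{\BC^1(\CY)}\lesssim e^{-ck}$, while $\CP_{n-k}^x$ is only a \emph{contraction} in sup norm), so the sum is $O(1)$ rather than $O(e^{-cn})$; and the last term converges to $\int_\CY D_xh(x,y)\,\mu^x(dy)$, which is nonzero in general, because $\int h(x,y)\,\mu^x(dy)=0$ does not force $\int D_xh(x,y)\,\mu^x(dy)=0$ (differentiating the constraint produces a term involving the $x$-derivative of $\mu^x$). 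Hence $\sup_y|D_x\psi_n(x,y)|$ is merely bounded, not summable, and the cross-term $\CP_1^x(D_x\psi_n)(y_0)$ in your chain rule does not decay: iterating the decomposition reproduces the same non-decaying quantity one step down. The decay of $D_xu_n$ is real, but it comes from a cancellation between the sum and the last term — equivalently, from subtracting the projection $\Pi_xF=\scal{\mu^x,F}$ at each step — and making that cancellation quantitative requires differentiating $x\mapsto\mu^x$, which is the content of the lemma. The augmented-semigroup alternative fails for the same structural reason: $\bar\CP_t$ has no contraction or smoothing mechanism in the $x$-direction (its $x$-marginal is frozen), so exponential decay of $|\bar\CP_t h|_{\BC^1(\R^d\times\CY)}$ for fibrewise mean-zero $h$ does not follow from \eqref{osc}, \eqref{e:goodbound} or Lemma~\ref{lem:smoothPtx}; it is essentially a restatement of the claim.

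The paper sidesteps this with a spectral argument. For one fixed large $t$ the operator $(1-\Pi_x)\CP_t^x$ has norm at most ${1\over 2}$ on $\BC^k$, $k=0,1,2$, uniformly in $x$, so the resolvent $R^x(\lambda)=(\lambda-\CP_t^x)^{-1}$ is uniformly bounded on a small circle $\gamma$ around $1$, and $\bar f(x)={1\over 2i\pi}\oint_\gamma\scal{\mu,R^x(\lambda)f(x,\fat)}\,d\lambda$ for an arbitrary fixed reference measure $\mu$. Differentiation in $x$ then only ever hits the semigroup at the single fixed time $t$, via $D_xR^x=R^x(D_x\CP_t^x)R^x$, so Lemma~\ref{lem:smoothPtx} suffices; the Neumann series for $R^x(\lambda)$ performs exactly the resummation-with-projection that your telescoping is missing. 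To salvage your elementary route you would have to insert $(1-\Pi_x)$ corrections into the $\psi_n$ and establish differentiability of $x\mapsto\Pi_x$ along the way, at which point you have essentially reconstructed the resolvent proof.
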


\begin{proof}
For any given $x$ and $t$, we view $\CP_t^x$ as a bounded linear operator on 
the spaces $\BC^k$ of $k$ times continuously differentiable functions on $\CY$,
$k=0,1,2$. 
Writing $\Pi_x$ for the projection operator given by 
$\Pi_x F = \scal{\mu^x,F}\one$, where $F: \CY\to \R$ and $\one$ denotes the constant function, we
note that $\Pi_x$ commutes with $\CP_t^x$ and that,
by part (1) of Lemma~\ref{lemma-semi-group-0}, we can choose $t$ sufficiently 
large so that
$|(1-\Pi_x)\CP_t^x F|_{\BC^k} \le {1\over 2}|F|_{\BC^k}$ for $k \in \{0,1,2\}$, 
uniformly over $x$. We used the fact that  $F-\Pi_x F$ is centred.

We fix such a value of $t$ from now on.
Writing $R^x(\lambda)$ for the resolvent of $\CP_t^x$, it follows that
that for each $k \in \{0,1,2\}$,
the operator norm of $R^x(\lambda)$ in $\BC^k$ is bounded by $4$, 
uniformly in $x$ and uniformly over $\lambda$
belonging to the circle $\gamma$ of radius ${1\over 4}$ centred at $1$.
Indeed, for $B = \BC^k$, we write $B=\scal{\one}\oplus B^\perp$
where $B^\perp =\{F: \<\mu^x, F\>=0\}$, and view $\Pi_x$ as the projection onto
$\scal{\one}$ for this decomposition.

Since $\Pi_x$ commutes with $\CP_t^x$, that operator splits with respect to this decomposition
as $\CP_t^x=\id \oplus (1-\Pi_x)\CP_t^x$ and its resolvent is given by
$$R^x(\lambda) = (\lambda-\CP^x_t)^{-1}=(\lambda-\id)^{-1} \oplus (\lambda-(1-\Pi_x)\CP_t^x)^{-1}\;.$$
The first term is obviously bounded by $4$, while the second term is given by
the convergent Neumann series
$$\lambda^{-1} \Big(1+\f 1 \lambda (1-\Pi_x)\CP_t^x+\f 1 {\lambda^2} ((1-\Pi_x)\CP_t^x)^2+\dots\Big)\;,$$
which is also bounded by $4$ in operator norm since $|\lambda| \ge \f34$ and $\|(1-\Pi_x)\CP_t^x\|\le \f12$.
We claim that, uniformly over $\lambda \in \gamma$ and $x \in \R^d$,
the map $x \mapsto R^x(\lambda)$ is $\CC^2$ as a map into $L(\BC^2,\BC^0)$
and $\CC^1$ as a map into $L(\BC^2,\BC^1)$ and into $L(\BC^1,\BC^0)$.
Indeed, this is an immediate consequence of the identities
\begin{equs}[e:boundsDR]
D_x R^x &= R^x(D_x \CP_t^x) R^x \\
D_x^2 R^x &= 2 R^x (D_x \CP_t^x) R^x (D_x \CP_t^x) R^x
+ R^x (D_x^2 \CP_t^x) R^x\;,
\end{equs}
combined with Lemma~\ref{lem:smoothPtx}.

We now recall that one has \cite[Thm~III.6.17]{Kato}
\begin{equ}
\Pi_x = {1\over 2i\pi} \oint_\gamma R^x(\lambda) \,d\lambda\;.
\end{equ}
In particular, for any fixed probability measure $\mu$ on $\CY$, one has the identity
\begin{equ}[e:resolventfbar]
\bar f(x) = {1\over 2i\pi} \oint_\gamma \scal{\mu,R^x(\lambda)f(x,\fat)} \,d\lambda\;.
\end{equ}
It now suffices to note that, by our assumptions, one has
$D_x^k f(x,\fat) \in \BC^{2-k}$ for $k \in \{0,1,2\}$ uniformly over $x$, and therefore
the claim follows from \eqref{e:boundsDR} combined with Lemma~\ref{lem:smoothPtx}.
\end{proof}

\subsection{Uniform estimates on the slow variable}
\label{sec:uniformBounds}

The main theorem in this section is a uniform estimate for the slow variables. 
The divergence of the H\"older norm of $r \mapsto y_r^\eps$ means
that the bounds given on $A^\eps$ and $\tilde A^\eps$ in the proof of 
Lemma~\ref{prop:equalA} diverge as $\eps \to 0$, 
and are thus inadequate to show any kind of tightness result.
This is where our precise choice of $A^\eps$ comes in.
We will show that $A^\eps$ belongs to the Banach space $H^p_\eta\cap \bar H^p_{\bar \eta} $
with uniform (in $\eps$) upper bounds on its norms.

The estimates obtained in Lemma~\ref{lemma-semi-group-0} on $\CP_t^x$ are not quite
sufficient for our use, 
we will also introduce a second family of 
random semigroups $\CQ_{s,t}^x$ generated by the flow $\Phi^x_{s,t}$, 
see Definition~\ref{def:Qst} below.
Given the Wiener process $\hat W_t$ and the fractional Brownian motion $B_t$ as before, 
we define the filtration $\CF_t = \CG_t \vee \hat \CG_t$, where
\begin{equ}
\CG_t = \sigma\{ B_u-B_r\,:\, r\le  u\le t \}\;,\qquad
\hat \CG_t = \sigma\{\hat W_u-\hat W_r\,:\, r\le u\le t\}\;.
\end{equ}
Observe that 
$\CG_t= \sigma\{ W_u-W_r\,:\, r\le  u\le t \}$.
We will also make use of the `noise'
\begin{equ}[e:defNoise]
\hat \CG_t^s = \sigma \{\hat W_u-\hat W_r\,:\, s \le r\le u\le t\}\;,
\end{equ}
and also $ \CG_t^s$ defined using $W_t$.
In the definition for  $\CB_{\alpha,p}$, we use the filtration $\CF_t$ given above.

The following theorem is the main technical tool in the proof of Theorem~\ref{thm:main} as it
yields uniform bounds in $\eps$ on the fixed point map defining $x$. 
Its proof relies on three further lemmas, Lemmas~\ref{lemma:Q},~\ref{lemma:semi-group-2}, and~\ref{lemma:QP},
 given after the  proof of the theorem. 
\begin{theorem}
\label{fixed-point-map}
Let $T>0$, let $p \ge 2$ and $\alpha \in (0,\f12)$ be such that  $1+ \f 1 p<\alpha+H$,
 let $x_{\fat}$ be an $\R^d$-valued process in $\CB_{\alpha,p}$, 
 and let $y_{\fat}^\eps$ be the solution to (\ref{fast1}) where
$V$ is assumed to satisfy Assumption~\ref{ass:main}.
Then there are exponents $\eta > {1\over 2}$ and $\bar \eta > 1$ such that, 
for $h: \R^d\times \CY\to \R$  a bounded uniformly Lipschitz continuous function, one
has the following.
\begin{enumerate}
\item Suppose in addition that $\bar h(x) \eqdef \int_\CY h(x, y)\mu^x(dy)=0$
for all $x$.  Then  for any $\beta<H$, there exists a constant $\kappa>0$ 
such that
\begin{equ}[e:wantedUniformBound]
\left\|\int_s^t h(x_r, y_r^\eps) \, dB_r\right\|_{p} \lesssim
 \eps^\kappa   |h|_{\BC^1}  \left( \|x\|_{\alpha, p} |t-s|^{\bar \eta} 
+ |t-s|^{\eta}\right)\;,
\end{equ}
holds uniformly over $\eps$ and over $x_t$.
\item For general $h$, one has the bound
\begin{equ}[e:wantedUniformBoundSimple]
\left\| \int_s^t h(x_r, y_r^\eps) \, dB_r\right\|_{p}  \lesssim   |h|_{\BC^1} \left( \|x\|_{\alpha, p} |t-s|^{\bar \eta} 
+ |t-s|^{\eta}\right)\;.
\end{equ}
\end{enumerate}\end{theorem}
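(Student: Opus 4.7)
The proof plan is to apply the stochastic sewing lemma (Lemma~\ref{sewing-lemma}) to the germ
\[
A_{s,t} = \int_s^t h(x_s, Y_{s,r}) \, dB_r,
\]
where $Y_{s,r} = \bar\Phi^{x_s}_{s,r}(y_s)$ is the fast variable frozen at $x_s$. By Lemma~\ref{prop:equalA} the sewn process $I_t(A)$ coincides with the Young integral $\int_0^t h(x_r, y_r^\eps) \, dB_r$, so \eqref{e:wantedUniformBound}--\eqref{e:wantedUniformBoundSimple} reduce to uniform-in-$\eps$ control of $\|A\|_{\eta,p}$ and $\$A\$_{\bar\eta,p}$ for suitable $\eta > 1/2$ and $\bar\eta > 1$, with an extra factor $\eps^\kappa$ in both norms whenever $\bar h \equiv 0$.

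For $\|A_{s,t}\|_p$ I decompose $B = \bar B^s + \tilde B^s$ on $[s,t]$. The integrand $r \mapsto h(x_s, Y_{s,r})$ is measurable with respect to the enlarged filtration $\CG_s \vee \hat\CG_\infty$, against which $\tilde B^s$ remains independent (since $W \perp \hat W$), so Lemma~\ref{lemma-a} combined with integration by parts against $\bar B^s$ using \eqref{e:boundBbar} yields $\|A_{s,t}\|_p \lesssim \||h(x_s, Y_{s,\cdot})|_{-\kappa}\|_q \cdot |t-s|^{H-\kappa}$. For part~2 the trivial bound $|h(x_s, Y_{s,\cdot})|_{-\kappa} \lesssim |h|_\infty |t-s|^\kappa$ gives $\eta = H > 1/2$. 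In part~1 (where $\bar h \equiv 0$) the ergodicity estimate Corollary~\ref{lem:boundNegHolder} provides the pointwise bound $\|\int_u^v h(x_s, Y_{s,r})\,dr\|_p \lesssim \eps^{1/p}|v-u|^{1-1/p}$, and a Kolmogorov chaining argument lifts this to $\||h(x_s, Y_{s,\cdot})|_{-\kappa}\|_p \lesssim \eps^{1/p - \delta}$ for $\kappa$ slightly larger than $1/p$, supplying the $\eps^\kappa$ gain with $\eta = H-\kappa > 1/2$ for $p$ large.

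The harder ingredient is the $\bar H^p_{\bar\eta}$ bound; crucially, the stochastic sewing lemma only requires $\|\E(\delta A_{sut}|\CF_s)\|_p$ rather than $\|\delta A_{sut}\|_p$ itself, which is essential because the fast oscillations of $Y$ make the latter scale badly in $\eps$. I expand
\[
\delta A_{sut} = \int_u^t \bigl[h(x_s, Y_{s,r}) - h(x_u, Y_{u,r})\bigr] \, dB_r
\]
and condition on $\CF_s$, using the tower property through $\CF_u$ for the $(x_u, Y_u)$ piece. The Wiener part of $dB_r$ (against $\tilde B^s$ or $\tilde B^u$) vanishes under conditioning on the larger filtrations $\CG_s \vee \hat\CG_\infty$ (resp.\ $\CG_u \vee \hat\CG_\infty$), so what remains is a Riemann--Stieltjes integral against the smooth $\bar B$. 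Integration by parts using \eqref{e:boundBbar} together with the exponential ergodicity bounds of Lemma~\ref{lemma-semi-group-0}(1) applied to the frozen semigroups $\CP^{x_s}$ and $\CP^{x_u}$ converts the decay $e^{-c(r-\cdot)/\eps}$ of the conditional mean of $h(x_\cdot, Y_{\cdot, r}) - \bar h(x_\cdot)$ into an $\eps^\theta$ factor. The main obstacle is extracting, simultaneously, the correct scaling $|t-s|^{\bar\eta}$ with $\bar\eta > 1$: this requires a delicate comparison between the $(s)$ and $(u)$ pieces through the H\"older continuity of $x \mapsto \CP^x_t h(x,\cdot)$ (via \eqref{e:goodBoundSG} and the $\BC^2$-regularity of $\bar h$ from Lemma~\ref{lem:regularfbar}) and of the non-autonomous flow $\Phi^x_{s,t}$, which is precisely where the auxiliary semigroup $\CQ^x_{s,t}$ and Lemmas~\ref{lemma:Q}, \ref{lemma:semi-group-2}, \ref{lemma:QP} enter, via a two-step conditioning that first freezes $x_u, y_u$ on an enlarged $\sigma$-algebra, applies frozen-semigroup ergodicity, and then averages back. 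The resulting bound $\$A\$_{\bar\eta,p} \lesssim \eps^\kappa |h|_{\BC^1}\|x\|_{\alpha,p}$ with $\bar\eta > 1$ (assured by the hypothesis $\alpha + H > 1 + 1/p$, with the factor $\eps^\kappa$ present in part~1 and replaced by $1$ in part~2) combines via Lemma~\ref{sewing-lemma} with the $H^p_\eta$ control to conclude.
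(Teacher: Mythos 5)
Your proposal follows the paper's proof essentially step for step: the same germ $A_{s,t}=\int_s^t h(x_s,Y_{s,r})\,dB_r$ identified with the Young integral via Lemma~\ref{prop:equalA}, the $H^p_\eta$ bound via Lemma~\ref{lemma-a} combined with Corollary~\ref{lem:boundNegHolder} and a Kolmogorov argument, and the $\bar H^p_{\bar\eta}$ bound obtained by conditioning on $\CF_s\vee\CG_u$ and comparing the frozen and non-autonomous semigroups through \eqref{e:goodBoundSG} and Lemmas~\ref{lemma:Q}, \ref{lemma:semi-group-2}, \ref{lemma:QP}. The only immaterial deviation is in part~2, where the paper simply decomposes $h=(h-\bar h)+\bar h$ and invokes the bound \eqref{e:boundCA} for the $\bar h$ term rather than re-running the germ estimates without the centering assumption.
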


\begin{proof}
Fix an arbitrary $\R^d$-valued  stochastic process $x_t$ in $\CB_{\alpha,p}$ (which is not necessarily a solution
to our equation).
We first note that the bound \eqref{e:wantedUniformBoundSimple} for general $h$ (i.e.\ with $\bar h\not =0$) 
follows from the bound \eqref{e:wantedUniformBound}, combined with the estimate  (\ref{e:boundCA}) 
for $\int_0^t \bar h(x_r) dB_r$. We therefore only focus on the proof of \eqref{e:wantedUniformBound}
and assume that $\bar h=0$ from now on.

During the rest of the section, we will also suppress the 
superscript $\eps$ whenever possible. 
Note that standard estimates for the Young integral $\int_0^t h(x_r, y_r^\eps) \, dB_r$ 
 obtained by taking limits in the Riemann sum $ \sum _{[v,u]\in \CP} h(x_s, y_s)(B_u-B_v)$
 would require uniform (in $\eps$) bounds on the H\"older norms on $y^\epsilon$, which
 we do not have.
 
In order to obtain estimates that are uniform in $\eps$, it will be useful to use Lemma~\ref{prop:equalA} and write the integral as
$\lim_{|\CP| \to 0} \sum _{[u,v]\in \CP} A_{u,v}$,
where $A$ is given~by $$A_{s,t}=\int_s^t h(x_s, Y_{s,r}) \, dB_r.$$

In order to obtain the bound \eqref{e:wantedUniformBound} from Lemma~\ref{sewing-lemma}, it therefore remains
to obtain a bound on $\|A\|_{\eta,p}$ and $\$A\$_{\bar \eta,p}$
for some $p \ge 2$, some $\eta > {1\over 2}$ and some $\bar \eta > 1$.
The bound on $\|A\|_{\eta,p}$ is contained in the following lemma, the proof of which is relatively straightforward.

\begin{lemma}\label{lem:boundAst}
Assume that $\bar h=0$. For every $p\ge 2$ and $\kappa > 0$, one has $\{A_{s,t}\}\in H^p_\eta$ with $\eta = H-\kappa$ and
\begin{equ}
\|A_{s,t}\|_{p} \lesssim \eps^{\bar \kappa} |t-s|^{H-\kappa}\;,
\end{equ}
uniformly over $s,t \in [0,T]$, provided that $\bar \kappa < (\kappa/2)\wedge (1/p)$.
\end{lemma}

\begin{proof}
To prove that $\|A\|_{\eta, p}=\sup_{s<t}\f{\|A_{s,t}\|_p}{|t-s|^\eta}$ is finite,  we 
first write
\begin{equ}
F_s(r) = h(x_s, Y_{s,r})\;,
\end{equ}
so that 
\begin{equ}
A_{s,t} = \int_s^t F_s(r)\,dB_r\;.
\end{equ}
Furthermore, conditional on $\CF_s$, $F_s$ and $B$ are independent,
so that we can apply Lemma~\ref{lemma-a}, yielding for $q > p$  and $\kappa\in [0,1)$ the bound
\begin{equ}
\|A_{s,t}\|_p \lesssim \| |F_s|_{-\kappa} \|_{q} \,|t-s|^{H-\kappa}\;. 
\end{equ}
On the other hand, it follows from Corollary~\ref{lem:boundNegHolder} exploiting ergodicity,
that for $s \le u \le v\le t$, one has the bound
\begin{equ}
\Big\|\int_u^v F_s(r)\,dr\Big\|_q \lesssim \eps^{1\over q} |v-u|^{1-{1\over q}}\;,
\end{equ}
so that Kolmogorov's continuity theorem implies the bound
\begin{equ}
\||F_s|_{-\kappa}\|_{q}= \Big\| \sup_{u \not = v} |u-v|^{1-\kappa} \int_u^v F_s(r)\,dr\Big\|_q \lesssim \eps^{1\over q}\;,
\end{equ}
provided that $\kappa > {2\over q}$. Choosing $q = 1/ \bar\kappa$ completes the proof. 
\end{proof}

It now remains to show that $A_{s,t}\in \bar H^p_{\bar \eta}$ for some $\bar \eta>1$ and to obtain a suitable
bound for small values of $\eps$.
We have the identity
\begin{equ}
\delta A_{sut} = \int_u^t \big(h(x_s, Y_{s,r})-h(x_u, Y_{u,r})\big)\, dB_r.
\end{equ}

Recall that this integral is defined as the sum of a Wiener integral against $\tilde B_r^u$ and a 
Riemann--Stieltjes integral against $\bar B_r^u$. Since $\tilde B_r^u$  is independent of 
$\CF_u\vee \hat \CG_t$,  while $h(x_s, Y_{s,r})-h(x_u, Y_{u,r})$ is measurable with respect to it,
the Wiener integral has vanishing conditional
expectation against $\CF_u$, so that 
\begin{equs}
\E (\delta A_{sut} \, \big|\, \CF_u) &= \int_u^t \E \big(h(x_s, Y_{s,r})-h(x_u, Y_{u,r})\,\big|\, \CF_u\big)\,\dot {\bar B}^u_r\,dr\;\\
&= \int_u^t \big(  \CP_{r-u} ^{x_s}  h(x_s, \fat) (Y_{s,u} )-\CP_{r-u}^{x_u}  h(x_u, \fat)(y_u)\big)\,\dot {\bar B}^u_r\,dr.
\end{equs}
In the last line we have used
$$Y_{s,r}=\bar \Phi_{s,r}^{x_s} (y_s)=\bar \Phi_{u,r}^{x_s}\bar \Phi_{s,u}^{x_s}(y_s)
=\bar \Phi_{u,r}^{x_s} (Y_{s,u})\;,\qquad
Y_{u,r}=\bar \Phi_{u,r}^{x_u} (y_u)\;.
$$
It seems to be difficult to get a good enough bound on this expression, so we exploit the fact that 
we really only need to bound the conditional expectation of $\delta A_{sut}$ with 
respect to $\CF_s$ rather than $\CF_u$. Conditioning on $\CF_s$ however has the 
unfortunate side effect that it no longer keeps this term separate from the term
$\dot{\bar B}_r^u$. Instead, we are going to condition on
$\CF_s \vee \CG_u$. We have 
\begin{equs}
 &\E (  \delta A_{sut}\,\big| \, \CF_s \vee\CG_u) \label{e:condDeltaA}\\
&=\int_u^t  \E\left(  \CP_{r-u} ^{x_s}  h(x_s, \fat) (Y_{s,u} )-\CP_{r-u}^{x_u}  h(x_u, \fat)(y_u) \,\big| \, \CF_s \vee \CG_u\right)
 \,\dot {\bar B}^u_r\,dr\\
 &=\int_u^t  \left( \CP^{x_s}_{y-s} \CP_{r-u} ^{x_s}  h(x_s, \fat) (y_s) -\E\left( \CP_{r-u}^{x_u}  h(x_u, \fat)(y_u) \,\big| \, \CF_s \vee \CG_u\right)\right)
 \,\dot {\bar B}^u_r\,dr. \end{equs}
 Given  a `final time' $u$, we write
\begin{equ}\label{CB}
\CU_s^u = \{F \colon \Omega \times \CY \to \R\,:\, \text{$F$ is bounded and $(\CF_s \vee \CG_u)\otimes \CB(\CY)$-measurable}\}\;.
\end{equ}
Given an element $F \in \CU_s^u$, we will use various norms for its $\CY$-dependency, but will always keep the $\omega$-dependency
fixed, so that these norms are interpreted as $\R_+$-valued random variables. For example, we set
\begin{equ}
|F|_\infty(\omega) = \sup_{y\in \CY}|F(\omega,y)|\;,\quad
|F|_\Lip(\omega) = \sup_{\bar y\neq y\in \CY} \rho(y,\bar y)^{-1} |F(\omega,y)-F(\omega,\bar y)|\;,
\end{equ}
and similarly for $|F|_\Osc(\omega)$, but we will always denote them simply by $|F|_\infty$, $|F|_\Lip$, etc.

For any stochastic process $x$ (not necessarily a solution
to our equation) adapted to the full filtration $\CF$, we then define a collection of bounded linear operators
$\CQ_{r,v}^x \colon \CU_v^u \to \CU_r^u$ in the following way. 
\begin{definition}\label{def:Qst}
Given a fixed value $u$ and a process $x$ adapted to $\CF$, we set for $r \le v \le u$ and $F\in \CU_v^u$,
\begin{equ}\label{Q-semi-group}
\bigl(\CQ_{r,v}^x F\bigr)(\omega,y) \eqdef \E \bigl(F(\fat,\Phi_{r,v}^{x}(y,\fat))\,|\, \CF_r \vee \CG_u\bigr)(\omega)\;.
\end{equ}
\end{definition}

\begin{remark}
The fact that we have $\CG_u$ and not $\CG_v$ in the right hand side of \eqref{Q-semi-group} is not a typo!
We always condition on the whole trajectory of the fractional 
Brownian motion $B$ up to the `final' time $u$. Observe that  $\CQ_{r,v}^x F$ is a three parameter family of stochastic processes, and could be denoted by $\CQ_{r,v}^{x, u} F$.
\end{remark}

Since $\bar \Phi^{\bar x}_{r,v}(y)$ is independent of $\CG_u$, for $u\ge v\ge r$, we can also build from $\CP_{v-r}^{\bar x}$ 
an operator $\hat \CP_{r,v}^{\bar x} \colon \CU_v^u\to \CU_r^u$ by setting
\begin{equ}
\bigl(\hat \CP_{r,v}^{\bar x} F\bigr)(\omega,y) 
\eqdef \E \bigl(\bigl( \CP_{v-r}^{\bar x} F\bigr)(\fat,y) \,|\,\CF_r\vee \CG_u\bigr)(\omega)\;.
\end{equ}
By $\CP_{v-r}^{\bar x} F( \omega,y) $ we mean  applying the semigroup to each $F(\omega,\cdot)$, so if $F$ happens to be $\CF_r \vee \CG_u$-measurable, then 
$\hat \CP_{r,v}^{\bar x}$ coincides with $\CP_{v-r}^{\bar x}$,
applied $\omega$-wise.

Using these notations, we can rewrite \eqref{e:condDeltaA} as
\begin{equ}[e:exprCondA]
 \E (  \delta A_{sut}\,\big| \, \CF_s \vee\CG_u)
=\int_u^t  \left( \hat \CP_{s,u} ^{x_s} \CP_{r-u} ^{x_s}  h(x_s, \fat) (y_s)-\CQ^x_{s,u} \CP_{r-u}^{x_u}  h(x_u, \fat)(y_s)\right)
 \,\dot {\bar B}^u_r\,dr. 
\end{equ}

The expression in \eqref{e:exprCondA} then naturally splits into two parts. 
The first part is given by 
 $$I_1 \eqdef \int_u^t  \hat \CP_{s,u} ^{x_s} \big(   \CP_{r-u} ^{x_s}  h(x_s, \fat) -  \CP_{r-u} ^{x_u}  h(x_u, \fat) \big)(y_s)\dot {\bar B}^u_r\,dr\;,$$
We then apply the estimate in   \eqref{e:goodBoundSG}, namely
\begin{equs}
|\CP_{t}^{x} h(x,\fat) - \CP_{t}^{\bar x} h(\bar x,\fat)|_\infty
\lesssim  |h|_\infty^{\kappa}|h|_\Lip^{1-\kappa} |x-\bar x|^{1-\kappa} e^{-\kappa ct/\eps} \;.
\end{equs}
This term is then bounded by 
\begin{equ}\label{estimate-A1}
|I_1|
\lesssim |h|_\Lip^{1-\kappa}\, |h|_\infty^\kappa\, \E \big( |x_s- x_u|^{1-\kappa} \,|\, \CF_s\vee \CG_u\big) \int_u^t e^{-\kappa c(r-u)/\eps}  |\dot {\bar B}^u_r|\,dr\;.
\end{equ}
Consequently,  for $\f 1 {p'}+\f 1{q'}=1$, \begin{equ}
\|I_1\|_{p}
\lesssim |h|_\Lip^{1-\kappa}\, |h|_\infty^\kappa\,
\bigl\| \E \big( |x_s- x_u|^{1-\kappa} \,|\, \CF_s\vee \CG_u\big)\bigr\|_{pp'} 
\Bigl\|\int_u^t e^{-{\kappa c(r-u)\over \eps}}  |\dot {\bar B}^u_r|\,dr\Bigr\|_{pq'}\;.
\end{equ}
We choose $p' = (1-\kappa)^{-1}$ and $q'=\kappa^{-1}$, which yields the bound
\begin{equ}
\bigl\| \E \big( |x_s- x_u|^{1-\kappa} \,|\, \CF_s\vee \CG_u\big)\bigr\|_{pp'} 
\le \|x\|_{\alpha,p}  |s-u|^{(1-\kappa)\alpha} \;.
\end{equ}
recall that $\|\dot {\bar B}^u_r\|_q \lesssim |r-u|^{H-1}$ for every $q \ge 1$ so that 
$$\Bigl\|\int_u^t e^{-{\kappa c(r-u)\over \eps}}  |\dot {\bar B}^u_r|\,dr\Bigr\|_{pq'}
\lesssim \eps^H \wedge |t-u|^H.$$
Combining these bounds, we conclude that for every $\bar \eta< H+\alpha$ there exist $\kappa, \bar \kappa > 0$ such that  
\begin{equ}
\|I_1\|_{p}\lesssim  \epsilon^{\bar \kappa}  \|x\|_{\alpha,p} \, |t-u|^{\bar \eta}\,   |h|_\Lip^{1-\kappa}\, |h|_\infty^\kappa\;.
\end{equ}

The remaining term is given by 
\begin{equ}
I_2 \eqdef \int_u^t  \big(\bigl(\hat \CP_{s,u} ^{x_s} - \CQ_{s,u} ^{x}\bigr) \CP_{r-u} ^{x_u}  h(x_u, \fat)\big) (y_s)\big)\dot {\bar B}^u_r\,dr\;.
\end{equ}
We then apply  the following estimate from Lemma~\ref{lemma:QP}, to be found after this proof, 
$$|\hat \CP_{s,u}^{x_s}F - \CQ_{s,u}^x F|_\infty 
\lesssim  \sqrt{\E (|x|_{\bar \alpha}^2 \,|\,\CF_s \vee \CG_u)}\, |u-s|^{\bar \alpha} |F|_\Lip\;,$$
where the H\"older norm $|x|_{\bar \alpha}$ is taken on $[s, u]$ and $\bar \alpha<H$ is a number to be chosen, to deduce that
\begin{equs}
\big|I_2\big|
 &=\left| \int_u^t  \big( 
\hat \CP^{x_s}_{s,u}  \CP_{r-u} ^{x_s}  h(x_s, \fat) -  \CQ^{x}_{s,u} \CP_{r-u}^{x_s}  h(x_s, \fat)   \big)(y_s )
 \, \dot {\bar B}^u_r\,dr \right| \\
 &\lesssim \int_u^t  \sqrt{\E (|x|_{\bar \alpha}^2 \,|\,\CF_s \vee \CG_u)}\, |u-s|^{\bar \alpha}  |\CP_{r-u} ^{x_s}  h(x_s, \fat) |_\Lip  
 \, |\dot {\bar B}^u_r|\,dr\;.   
 \end{equs}
We apply to this the following estimate obtained in Lemma~\ref{lemma-semi-group-0}:
$$ |\CP_{r-u} ^{x_s}  h(x_s, \fat)  |_\Lip  \le C e^{-c(r-u)/\eps}  |h(x_s, \fat) |_\Lip
\le C e^{-c(r-u)/\eps}  |h|_\Lip\;.$$
Then, provided that we choose $\bar \alpha$ and $p$ in such a way that  $\alpha < \bar \alpha- {1\over p}$,
we can apply Kolmogorov's continuity theorem yielding
$$\|I_2\|_p \lesssim \|x\|_{\alpha, p}\epsilon^{\bar \kappa}  |h|_\Lip
\, (t-s)^{\bar \alpha +H -\bar \kappa}.$$
Combining these estimates, we have shown that, provided that we choose $p$ sufficiently large and $\bar \kappa$
sufficiently small, there exists $\bar\eta>1$ and a constant $C(h)$ such that 
\begin{equ}
\left\| \E (  \delta A_{sut}\,\big| \, \CF_s \vee \CG_u)\right\|_{p} \le
C(h) \|x\|_{\alpha, p}\epsilon^{\bar \kappa}  \, (t-s)^{\bar \eta}\;.
\end{equ}

When combining this with Lemma~\ref{lem:boundAst}, we have proved that $A$ belongs to 
 $H^p_\eta\cap \bar H^p_{\bar \eta}$ with $\eta>\f 12$ and $\bar \eta>1$, and we have
 obtained bounds for it that are of order $\eps^{\bar \kappa}$ for sufficiently small $\bar \kappa >0$.
 We also know from Lemma~\ref{prop:equalA} that $I_t(A^\epsilon)$ equals the Young integral
$\int_0^t h(x_s, y_s^\eps) \, dB_s$.
Applying Lemma~\ref{sewing-lemma}, this leads to the bound
\begin{equs}
\Big\|\int_s^t h(x_r^\epsilon, y_r^\epsilon)\,dB_r\Big\|_p
&\lesssim  \bigl(\$A\$_{\bar \eta,p} |t-s|^{\bar \eta} + \|A\|_{\eta,p} |t-s|^{\eta}\bigr)\\
&\le C(h) \|x\|_{\alpha, p}\, \epsilon^{\bar \kappa}  \, |t-s|^{\bar \eta} 
+\epsilon^{\bar \kappa} |t-s|^{\eta}\;,
\end{equs}
uniformly over $\epsilon \in (0,1]$, thus completing the proof of (\ref{e:wantedUniformBound}).
\end{proof}

\begin{corollary}\label{co:inform-bounds}
Suppose that Assumption~\ref{ass:main} holds.
The solutions $x^\eps$ to \eqref{e:equation} are uniformly bounded in $\CB_{\alpha,p}$ for any $\alpha < H$ and $p\ge 1$.
\end{corollary}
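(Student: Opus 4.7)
The plan is to bootstrap the estimate of Theorem~\ref{fixed-point-map}(2) applied with $h = f$. Since the $L^p$ norm is increasing in $p$, and the $\CB_{\alpha,p}$ seminorm restricted to a bounded interval is essentially decreasing in $\alpha$ (via H\"older on $|t-s|^{\alpha_2-\alpha_1}$), it suffices to establish the uniform bound for large $p$ and a single $\alpha$ arbitrarily close to $H$. Fix $\alpha \in (1-H, \tfrac12)$ and $p\ge 2$ large enough so that $\alpha + H > 1 + 1/p$, so that Theorem~\ref{fixed-point-map} applies with these parameters, producing exponents $\bar\eta > 1$ and $\eta > \tfrac12$; inspecting its proof, $\eta = H-\kappa$ for any $\kappa > 0$.

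The first step is to derive an initial uniform bound in $\CB_{\alpha,p}$. The existence theory underlying Theorem~\ref{thm:genSol} guarantees $x^\eps \in \CB_{\alpha,p}$, possibly with an $\eps$-dependent norm. Applying Theorem~\ref{fixed-point-map}(2) together with the trivial bound $\bigl\|\int_s^t g(x^\eps_r,y^\eps_r)\,dr\bigr\|_p \lesssim |g|_\infty |t-s|$ gives
\begin{equ}
\|\delta x^\eps_{s,t}\|_p \lesssim \|x^\eps\|_{\alpha,p}\,|t-s|^{\bar\eta} + |t-s|^\eta + |t-s|\;.
\end{equ}
On a subinterval $[s_0, s_0+T_0]$, dividing by $|t-s|^\alpha$ and taking the supremum yields
\begin{equ}
\|x^\eps\|_{\alpha,p;[s_0,s_0+T_0]} \le C\,T_0^{\bar\eta-\alpha}\,\|x^\eps\|_{\alpha,p;[s_0,s_0+T_0]} + C\bigl(T_0^{\eta-\alpha} + T_0^{1-\alpha}\bigr)\;.
\end{equ}
Since $\bar\eta > 1 > \alpha$, one may choose $T_0$ small enough (independently of $\eps$) that $C\,T_0^{\bar\eta-\alpha} \le \tfrac12$, which absorbs the first term. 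Iterating over finitely many subintervals covering $[0,T]$ via the standard estimate $\|x^\eps\|_{\alpha,p} \lesssim T_0^{\alpha-1}\sup_{s_0}\|x^\eps\|_{\alpha,p;[s_0,s_0+T_0]}$ (as in the proof of Theorem~\ref{thm:genSol}) produces a uniform-in-$\eps$ bound $\|x^\eps\|_{\alpha,p}\le K$.

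Feeding this uniform bound back into Theorem~\ref{fixed-point-map}(2), one now obtains
\begin{equ}
\|\delta x^\eps_{s,t}\|_p \lesssim K\,|t-s|^{\bar\eta} + |t-s|^\eta + |t-s| \lesssim |t-s|^\eta
\end{equ}
on $[0,T]$ (using $\bar\eta > \eta$ and $\eta \le 1$), which gives the desired uniform $\CB_{\eta,p}$ bound for every $\eta < H$. The delicate point is the absorption step above, which requires $\|x^\eps\|_{\alpha,p;[s_0,s_0+T_0]}$ to be \emph{a priori} finite. To render this non-circular, the cleanest route is to work with the stopped process $x^\eps_{t\wedge\tau^\eps_M}$, where $\tau^\eps_M = T\wedge\inf\{t : \|x^\eps\restr [0,t]\|_{\alpha,p}\ge M\}$, derive the bound on the stopped trajectory, and then conclude that for $M$ chosen large enough (uniformly in $\eps$) the stopping never triggers.
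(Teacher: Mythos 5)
Your argument is essentially the paper's own proof: apply Theorem~\ref{fixed-point-map}(2) with $h=f$ together with the trivial bound on the $g$ term, absorb the $\|x^\eps\|_{\alpha,p}$ contribution on a short interval chosen uniformly in $\eps$, and iterate over subintervals of $[0,T]$. Your two refinements --- the bootstrap feeding the uniform bound back in to reach exponents arbitrarily close to $H$ (the paper's proof only directly covers $\alpha<\f12$ and leaves this step implicit), and the stopping-time device to ensure the a priori finiteness needed for absorption (the paper instead cites the existence theory of Theorem~\ref{thm:genSol}) --- are correct and welcome, but do not change the route.
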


\begin{proof}
The assumptions on our data guarantee that, for each $\eps > 0$, there exists a unique solution
to \eqref{e:equation} that belong to $\CB_{\alpha,p}$, so we only need to obtain the uniform bound.
By Theorem~\ref{fixed-point-map} we obtain for the time interval $[0,T]$ the bound
\begin{equ}
\|x\|_{\alpha,p} \lesssim  C(|f|_\infty, |f|_\Lip) T^\kappa \bigl(1 + \|x\|_{\alpha,p}\bigr)+T\,|g|_\infty \;,
\end{equ}
where $\kappa>0$, which implies the required bound on a sufficiently short time interval. One concludes by
iterating the bound. 
\end{proof}

\begin{remark}\label{re:g}
It is clear from the proof of Corollary~\ref{co:inform-bounds} that instead of assuming
that $g$ is bounded, it suffices to guarantee that it satisfies a bound of the form
$\|\int_0^T g(x_r, y_r^\epsilon)dr\|_p \lesssim T^\kappa (1 + \|x\|_{\alpha,p})$ for some
$\kappa > 0$. (Here $y_r^\eps$ solves \eqref{fast1} driven by $x$ as usual.)
\end{remark}

\subsection{Bounds on the random semigroup} 

In the rest of this section, we fix an $\CF_t$-adapted stochastic process $x_t$ and as usual $\Phi_{s,t}^{x}$ denotes the solution flow to (\ref{fast1}). For any $\bar x\in \R^d$, fixed,  $\Phi_{s,t}^{\bar x}$ denotes the solution to (\ref{e:xfrozen}) with frozen variable $\bar x$.
We first bound the difference between the evolutions of $\Phi^x$ and $\bar \Phi^{\bar x}$ over a short time period $[r,r']$.
\begin{lemma}\label{lemma:semi-group-2}
Suppose that $x\in \CB_{\alpha,p}$.
Let $F: \Omega\times \CY\to \R$ be bounded and $(\CF_s\vee \CG_u)\otimes \CB(\CY)$ measurable. Then,
for $s \le r < r' \le u$ with $|r'-r| \le \eps$ and for $\bar x \in \R^d$, one has the almost sure bound
\begin{equ}\label{semi-group-difference1}
|\hat \CP_{r,r'}^{\bar x}F - \CQ_{r,r'}^x F|_\infty \lesssim \sqrt{\sup_{v\in [r,r']}\E \big(|x_v-\bar x|^2 \,|\,\CF_r \vee \CG_u\big)}\,  |F|_\Lip\;.
\end{equ}
\end{lemma}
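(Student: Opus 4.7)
The plan is to couple the two flows on the same probability space by driving both SDEs with the same $\hat W$, so that the estimate reduces to a standard It\^o--Gronwall bound for the difference of two SDEs sharing the same noise. Since $\bar\Phi_{r,r'}^{\bar x}(y,\cdot)$ depends only on the increments of $\hat W$ over $[r,r']$, it is independent of $\CF_r\vee\CG_u$. Combined with the fact that $F$ is $(\CF_s\vee\CG_u)$-measurable (hence \emph{a fortiori} $(\CF_r\vee\CG_u)$-measurable), a short Fubini argument gives
\begin{equ}
\hat\CP_{r,r'}^{\bar x}F(\omega,y) = \E\bigl[F(\cdot,\bar y_{r'})\,\big|\,\CF_r\vee\CG_u\bigr](\omega)\;,
\end{equ}
where $y_{r'} \eqdef \Phi_{r,r'}^x(y,\cdot)$ and $\bar y_{r'} \eqdef \bar\Phi_{r,r'}^{\bar x}(y,\cdot)$. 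Subtracting the analogous expression for $\CQ_{r,r'}^x F$ from Definition~\ref{def:Qst}, using the spatial Lipschitz bound on $F(\omega,\cdot)$, pulling out the $(\CF_r\vee\CG_u)$-measurable factor $|F|_\Lip(\omega)$, and applying Cauchy--Schwarz reduces the task to bounding an $L^2$ distance:
\begin{equ}
\bigl|(\hat\CP_{r,r'}^{\bar x}F - \CQ_{r,r'}^x F)(\omega,y)\bigr| \le |F|_\Lip(\omega)\sqrt{\E\bigl[\rho^2(y_{r'},\bar y_{r'})\,\big|\,\CF_r\vee\CG_u\bigr](\omega)}\;.
\end{equ}

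It remains to estimate $\E[\rho^2(y_{r'},\bar y_{r'})\,|\,\CF_r\vee\CG_u]$. I would first replace $\rho^2$ by the globally smooth modification $d^2$ already used in the proof of Lemma~\ref{lemma:bounddiffy}, so that It\^o's formula applies without cut-locus issues. Applying It\^o's formula to $d^2(y_t,\bar y_t)$ on $[r,r']$ -- noting that $y_r = \bar y_r = y$, that both processes are driven by the same $\hat W$, and that the martingale part has vanishing conditional expectation -- and combining the Lipschitz bounds on $V_0, V$ from Assumption~\ref{ass:main} with AM--GM on the cross terms yields the integral inequality
\begin{equ}
\E[d^2(y_t,\bar y_t)\,|\,\CF_r\vee\CG_u] \le \f{C}{\eps}\int_r^t \E\bigl[d^2(y_v,\bar y_v) + |x_v-\bar x|^2\,\big|\,\CF_r\vee\CG_u\bigr]\,dv\;,
\end{equ}
for $t \in [r,r']$. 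Gronwall's lemma then gives
\begin{equ}
\E[d^2(y_{r'},\bar y_{r'})\,|\,\CF_r\vee\CG_u] \le \f{C}{\eps}e^{C(r'-r)/\eps}\int_r^{r'}\E[|x_v-\bar x|^2\,|\,\CF_r\vee\CG_u]\,dv\;,
\end{equ}
and the hypothesis $r'-r \le \eps$ bounds both $e^{C(r'-r)/\eps}$ and $(r'-r)/\eps$ by absolute constants. The right-hand side is thus at most a constant multiple of $\sup_{v\in[r,r']}\E[|x_v-\bar x|^2\,|\,\CF_r\vee\CG_u]$, which is exactly the claim.

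The main technical point is the bookkeeping in It\^o's formula for the smoothed distance squared on a compact Riemannian manifold: one has to check that every contribution from the drift, the Stratonovich correction, and the quadratic variation can be written as a sum of terms controlled by $d^2(y_v,\bar y_v)$ and $|x_v-\bar x|^2$ separately. The delicate piece is the cross term $d(y_v,\bar y_v)|x_v-\bar x|$ arising from the Lipschitz dependence of $V_0, V$ on their first argument, which must be split via AM--GM; otherwise the $|x_v-\bar x|$ factor would prevent Gronwall from closing.
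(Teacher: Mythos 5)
Your proposal is correct and follows essentially the same route as the paper: the same identification of $\hat\CP_{r,r'}^{\bar x}F$ as a conditional expectation of $F$ evaluated along the frozen flow (using that the flow depends only on the increments of $\hat W$ over $[r,r']$, which are independent of $\CF_r\vee\CG_u$), the same reduction via the Lipschitz bound on $F$ to a conditional moment of the distance between the two coupled flows, and the same It\^o\slash Gronwall estimate on the smoothed squared distance $d^2$ with the martingale term killed by conditioning. The only cosmetic difference is that you pass through Cauchy--Schwarz to the conditional second moment, whereas the paper bounds the conditional first moment of $\rho$ and defers the routine estimate to Lemma~\ref{lemma:bounddiffy}; both land on the same right-hand side.
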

\begin{proof}
Since
$\bar \Phi^{\bar x}_{r,r'}(y_s)$ depends on the filtration of $B$ only through the value of $y_s$, it is measurable with respect to
$\CF_r \vee \hat \CG^r_{r'}$, c.f. \eref{e:defNoise}. Since furthermore $\hat \CG^r_{r'}$ is independent of $\CG_u$,
it follows that
$$
	\big(\hat \CP_{r,r'}^{\bar x} F(\omega, \fat)\big)(y)= \E (F(\omega,  \bar \Phi^{\bar x}_{r,r'}(y) )\,|\,\CF_r\vee \CG_u) \;.
$$
We now have
\begin{equs}
\bigl|\bigl(\CP_{r,r'}^{\bar x}F - \CQ_{r,r'}^x F\bigr)(\omega,y)\bigl| &=
\bigl|\E \bigl( F(\omega,  \bar \Phi^{\bar x}_{r,r'}(y) ) - F(\omega, \Phi^x_{r,r'}(y) )\,|\, \CF_r\vee \CG_u \bigr)\bigl| \\
&\le |F|_\Lip\; \E \bigl( \rho( \bar \Phi^{\bar x}_{r,r'}(y), \Phi^x_{r,r'}(y) )\,|\, \CF_r\vee \CG_u \bigr)\;.
\end{equs}
We then apply It\^o's formula to $d( \bar \Phi^{\bar x}_{r,r'}(y), \Phi^x_{r,r'}(y) )$, where $d$ is a
modification of $\rho$ such that $d^2$ is smooth. Since the increments of $\hat W$ on $[r, r']$ are independent of $\CF_r\vee \CG_u $,  its martingale term vanishes after taking conditional expectation with respect to $\CF_r\vee \CG_u$.
The rest of the estimate for the distance is routine, see Lemma~\ref{lemma:bounddiffy},
and the required bound follows.
\end{proof}

We fix a `final time' $u$  and recall that $\CU_s^u$ is the space of bounded real valued
functions from $\Omega\times \CY$ that are measurable with respect to $(\CF_s\vee \CG_u)\otimes \CB(\CY)$,  c.f. (\ref{CB}).

\begin{lemma}\label{lemma:Q}
 Let $s<r<v$.   Let $F\in \CU_s^u$ be a function that is continuous in the second variable for almost every $\omega$.
  The operators $\CQ_{s,r}^x: \CU^u_r\to \CU^u_s$ defined by (\ref{Q-semi-group}) 
satisfy the composition rule 
\begin{equ}
\CQ_{s,r}^x \circ \CQ_{r,v}^x F=\CQ_{s,v}^x F\;.
\end{equ}
\end{lemma}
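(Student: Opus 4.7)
The plan is to combine the flow property of \eqref{fast1} with the tower property of conditional expectations and a `freezing' argument. Pathwise uniqueness for \eqref{fast1} gives, for almost every $\omega$,
\begin{equ}
\Phi_{s,v}^x(y,\omega) = \Phi_{r,v}^x\bigl(\Phi_{s,r}^x(y,\omega),\omega\bigr)\;,
\end{equ}
and since $\CF_s \vee \CG_u \subset \CF_r \vee \CG_u$, the tower property turns the definition \eqref{Q-semi-group} of $\CQ_{s,v}^x F$ into
\begin{equ}\label{e:afterTower}
(\CQ_{s,v}^x F)(\omega,y) = \E\Bigl(\E\bigl(F(\cdot,\Phi_{r,v}^x(\Phi_{s,r}^x(y,\cdot),\cdot))\,\big|\,\CF_r \vee \CG_u\bigr)\,\Big|\,\CF_s \vee \CG_u\Bigr)(\omega)\;.
\end{equ}

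The key step is then to rewrite the inner conditional expectation as $(\CQ_{r,v}^x F)(\omega,\Phi_{s,r}^x(y,\omega))$. Setting $Z(\omega) \eqdef \Phi_{s,r}^x(y,\omega)$ and $G(z,\omega) \eqdef F(\omega,\Phi_{r,v}^x(z,\omega))$, one has by definition $(\CQ_{r,v}^x F)(\omega,z) = \E(G(z,\cdot)\,|\,\CF_r \vee \CG_u)(\omega)$, and the claim is the `take out what is known' identity
\begin{equ}
\E\bigl(G(Z(\cdot),\cdot)\,\big|\,\CF_r \vee \CG_u\bigr)(\omega) = (\CQ_{r,v}^x F)(\omega,Z(\omega))\;.
\end{equ}
Since both $x$ and $\hat W$ restricted to $[s,r]$ are $\CF_r$-measurable, so is $Z$; granted this, the identity is verified first on product-type integrands $G(z,\omega) = g(\omega)h(z)$, where both sides reduce to $h(Z(\omega))\,\E(g\,|\,\CF_r \vee \CG_u)(\omega)$, and then extended via linearity and a monotone-class argument to all bounded jointly measurable $G$.

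Substituting back into \eqref{e:afterTower} and recognising the resulting outer conditional expectation as $\CQ_{s,r}^x$ applied to the $\CU_r^u$-valued random function $(\CQ_{r,v}^x F)(\cdot,\Phi_{s,r}^x(y,\cdot))$ yields the Chapman--Kolmogorov identity. The main obstacle is the freezing step, which requires a jointly measurable version of the map $(z,\omega) \mapsto \tilde G(z,\omega) \eqdef \E(G(z,\cdot)\,|\,\CF_r \vee \CG_u)(\omega)$ so that the substitution $\tilde G(Z(\omega),\omega)$ is unambiguous. The continuity of $F$ in its $\CY$-argument (assumed in the hypothesis), combined with the almost-sure continuity of the flow $z \mapsto \Phi_{r,v}^x(z,\omega)$, ensures that $G$ is continuous in $z$ for a.e.\ $\omega$; dominated convergence then produces a continuous-in-$z$ version of $\tilde G$, which is jointly measurable and suffices to close the argument.
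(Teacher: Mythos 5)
Your proposal is correct and follows essentially the same route as the paper: both proofs rest on the flow identity $\Phi_{s,v}^x(y,\omega) = \Phi_{r,v}^x(\Phi_{s,r}^x(y,\omega),\omega)$, the tower property, and the `take out what is known' identity $\E(G(z,\fat)\,|\,\CG)|_{z=Z} = \E(G(Z(\fat),\fat)\,|\,\CG)$ applied with $Z = \Phi_{s,r}^x(y,\fat)$, which is $(\CF_r\vee\CG_u)$-measurable, with continuity of $F$ in its $\CY$-argument and of the flow in its initial condition used to justify the substitution. The paper simply unfolds $\CQ_{s,r}^x\CQ_{r,v}^x F$ directly rather than starting from $\CQ_{s,v}^x F$ and applying the tower property, but the computation is the same read in the opposite direction.
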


\begin{proof}
Since $\Phi_{r,v}^{x}(\Phi_{s,r}^{x}(y,\omega),\omega) = \Phi_{s,v}^x(y,\omega)$
and  $\omega \mapsto \Phi_{s,r}^{x}(y,\omega)$ is $(\CF_r \vee \CG_u)$-measurable, for $F\in \CU_v^u$,
\begin{equs}
\bigl(\CQ_{s,r}^x \CQ_{r,v}^x F\bigr)(y)
&= \E \Bigl(\bigl(\CQ_{r,v}^x F\bigr)(\fat, \Phi_{s,r}^{x}(y)) \,\Big|\,\CF_s\vee \CG_u\Bigr) \\
&= \E \Bigl(\E \big( F(\fat, \Phi_{r,v}^{x}(\Phi_{s,r}^{x}(y)))\,\big|\,\CF_r\vee \CG_u \big) \,\Big|\,\CF_s\vee \CG_u\Bigr)\\
&= \E \big( F(\fat, \Phi_{s,v}^{x}(y))\,\big|\,\CF_s\vee \CG_u \big) = \bigl(\CQ_{s,v}^x F\bigr)(y)\;,
\end{equs}
as required. 
Here, the fact that $\omega \mapsto \Phi_{s,r}^{x}(y,\omega)$ is $(\CF_r \vee \CG_u)$-measurable
was used in order to go from the first to the second line. This is a particular instance of the fact that 
if $x \mapsto F(x,\omega)$ is continuous in $x$  for almost every $\omega$ 
 and $Y \colon \Omega \to \CX$ is a $\CG$-measurable random variable for some sub-$\sigma$-algebra $\CG$, then the identity
\begin{equ}
\E \big(F(x,\fat)\,|\, \CG\big)\big|_{x = Y} = \E \big(F(Y(\fat),\fat)\,|\, \CG\big)
\end{equ}
holds almost surely. We have also used the fact that $\Phi_{s,\fat}^x$ has a version which is continuous in time and in the initial value.
\end{proof}

\begin{lemma}\label{lemma:QP}
The following estimate holds uniformly over all $s < v \le u$
and all $F \in \CU_v^u$:
\begin{equ}\label{p-q}
|\hat \CP_{s,v}^{x_s}F - \CQ_{s,v}^x F|_\infty 
\lesssim  \sqrt{\E (|x|_\alpha^2 \,|\,\CF_s \vee \CG_u)}\, |v-s|^\alpha |F|_\Lip\;.
\end{equ}
\end{lemma}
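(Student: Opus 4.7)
The plan is to reduce the statement to the short-time estimate of Lemma~\ref{lemma:semi-group-2} via a telescoping argument over a partition of $[s,v]$ on the natural time scale $\eps$. For intervals with $|v-s|\le \eps$, the bound follows directly from Lemma~\ref{lemma:semi-group-2} applied with the frozen variable $\bar x = x_s$, since
\[
\sup_{v'\in[s,v]} \E\big(|x_{v'}-x_s|^2\,\big|\, \CF_s\vee \CG_u\big)
\le \E\big(|x|_\alpha^2\,\big|\, \CF_s\vee \CG_u\big)\,|v-s|^{2\alpha}\;,
\]
so we only need to treat the case $|v-s| > \eps$.

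For the long-time case, fix a partition $s = s_0 < s_1 < \dots < s_N = v$ with $\eps/2 \le |s_{i+1}-s_i| \le \eps$, set $T_i = \CQ^x_{s,s_i}\hat\CP^{x_s}_{s_i,v}$, and note that $T_0 = \hat\CP^{x_s}_{s,v}$ and $T_N = \CQ^x_{s,v}$. Here I use that $\hat\CP^{x_s}$ satisfies a semigroup property on intervals contained in $[s,u]$: since $x_s$ is $\CF_s \vee \CG_u$-measurable, the kernel $\CP^{x_s}_t$ commutes (in $\omega$) with the conditional expectations $\E(\cdot \mid \CF_r \vee \CG_u)$ for any $r \ge s$, so composing two $\hat\CP^{x_s}$'s gives the expected one. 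Combined with the composition rule from Lemma~\ref{lemma:Q}, this yields the telescoping identity
\[
\CQ^x_{s,v} - \hat\CP^{x_s}_{s,v}
= \sum_{i=0}^{N-1} \CQ^x_{s,s_i}\bigl(\CQ^x_{s_i,s_{i+1}} - \hat\CP^{x_s}_{s_i,s_{i+1}}\bigr)\hat\CP^{x_s}_{s_{i+1},v}\;.
\]

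Now I bound each summand in $|\cdot|_\infty$. Since $\CQ^x_{s,s_i}$ is a conditional expectation composed with a flow, it contracts in $|\cdot|_\infty$ via $|\CQ^x_{s,s_i}G|_\infty \le \E(|G|_\infty \mid \CF_s \vee \CG_u)$. Writing $G_i = \hat\CP^{x_s}_{s_{i+1},v}F$ and applying Lemma~\ref{lemma:semi-group-2} on $[s_i,s_{i+1}]$ with $\bar x = x_s$ (which is $\CF_{s_i}\vee\CG_u$-measurable), then using $|x_{v'}-x_s| \le |x|_\alpha|v'-s|^\alpha$ and the smoothing estimate $|G_i|_\Lip \le C e^{-c(v-s_{i+1})/\eps}|F|_\Lip$ from Lemma~\ref{lemma-semi-group-0}, gives
\[
|T_{i+1}-T_i|_\infty \lesssim |v-s|^\alpha e^{-c(v-s_{i+1})/\eps}|F|_\Lip \cdot \E\Bigl(\sqrt{\E(|x|_\alpha^2\mid \CF_{s_i}\vee\CG_u)}\,\Big|\, \CF_s\vee\CG_u\Bigr)\;.
\]
Jensen's inequality applied to the square root, combined with the tower property, bounds the outer expectation by $\sqrt{\E(|x|_\alpha^2 \mid \CF_s \vee \CG_u)}$. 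Summing over $i$, the exponential factors form a convergent geometric sum since $s_{i+1}-s_i \ge \eps/2$, yielding $\sum_i e^{-c(v-s_{i+1})/\eps} \lesssim 1$ uniformly in $N$, and the claim follows.

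The main obstacle is the bookkeeping of the nested conditional expectations: one must verify that $\hat\CP^{x_s}$ really does satisfy the semigroup property used in the telescoping, which hinges on the fact that $x_s$ is measurable with respect to each of the conditioning $\sigma$-algebras $\CF_{s_i}\vee\CG_u$, so that the $\omega$-wise kernel $\CP^{x_s}_t$ can be pulled through the conditional expectations. Once this is in place, the smoothing of $\CP^{x_s}$ (which turns the $N \sim (v-s)/\eps$ telescoping terms into a summable exponential series) is precisely what produces a bound of order $|v-s|^\alpha$ rather than the naive $N \cdot \eps^\alpha \sim (v-s)\eps^{\alpha-1}$.
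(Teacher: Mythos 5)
Your proposal is correct and follows essentially the same route as the paper: a telescoping sum over a partition of $[s,v]$ at scale $\eps$, the $L^\infty$-contraction of $\CQ^x_{s,\fat}$, the short-time comparison of Lemma~\ref{lemma:semi-group-2} with $\bar x = x_s$, and the exponential decay of $|\CP^{x_s}_{v-r'}F|_\Lip$ from Lemma~\ref{lemma-semi-group-0} to make the sum over the $O((v-s)/\eps)$ intervals converge. The only cosmetic difference is that you handle the passage from conditioning on $\CF_{s_i}\vee\CG_u$ back to $\CF_s\vee\CG_u$ explicitly via Jensen and the tower property, which the paper absorbs into its estimate \eqref{e:simpleBound}.
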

\begin{proof}
We know that \eqref{p-q} holds for $|v-s| \le \eps$ from Lemma~\ref{lemma:semi-group-2} with
 $\bar x=x_s$ since one has the bound
 \begin{equ}[e:simpleBound]
\E \big(|x_{r'}- x_s|^2 \,|\,\CF_r \vee \CG_u\big)
\lesssim {{\E (|x|_\alpha^2 \,|\,\CF_s \vee \CG_u)}\, |u-s|^{2\alpha}}\;,
 \end{equ}
uniformly over $r \in [s,u]$ and $r' \in [r,u]$.
We also know from Lemma~\ref{lemma-semi-group-0} that $|\CP_{v-s}^{x_s}F|_\Lip$ is
bounded by a constant multiple of  $|F|_\Lip$, uniformly in time.

We then consider a partition $\Delta$ of $[s,v]$ into subintervals of size between $\eps/2$ and $\eps$,
and we write the difference of the two semigroups
as a telescopic sum, then apply consecutively the following: the triangle inequalities, 
the contraction property of $\CQ^x_{s,r}$, estimate (\ref{semi-group-difference1}) combined
with \eqref{e:simpleBound}, and Lemma~\ref{lemma-semi-group-0}. We also use the quasi semi-group property of $\CQ^x$.
This yields
\begin{equs}
|\hat \CP_{s,v}^{x_s}F - \CQ_{s,v}^x F|_\infty 
&\le \sum_{[r,r'] \in \Delta} |\CQ_{s,r}^x \bigl(\hat \CP_{r,r'}^{x_s} - \CQ_{r,r'}^x\bigr)\CP_{v-r'}^{x_s} F|_\infty \\
&\le \sum_{[r,r'] \in \Delta} |\bigl(\hat \CP_{r,r'}^{x_s} - \CQ_{r,r'}^x\bigr)\CP_{v-r'}^{x_s} F|_\infty\\
&\lesssim \sum_{[r,r'] \in \Delta} \sqrt{\E (|x|_\alpha^2\,|\, \CF_s \vee \CG_u)} \,|s-r'|^\alpha |\CP_{v-r'}^{x_s} F|_\Lip\\
&\lesssim \sum_{[r,r'] \in \Delta} \sqrt{\E (|x|_\alpha^2\,|\, \CF_s \vee \CG_u)} \,|s-v|^\alpha e^{-c|v-r'|/\eps}| F|_\Lip\\
&\lesssim  \sqrt{\E (|x|_\alpha^2 \,|\,\CF_s \vee \CG_u)}\, |v-s|^\alpha |F|_\Lip\;,
\end{equs}
as required. Note that there exists $C$ such that for any $\eps$,
$\sum_{[r,r'] \in \Delta} e^{-c|v-r'|/\eps}\le C$  provided the size of the partition is of order $\eps$.
\end{proof}

\subsection{Proof of the main result}
\label{sec:proofmain}

We now have all the ingredients in place for the proof of Theorem~\ref{thm:main}.

\begin{proof}[of Theorem~\ref{thm:main}]
By Lemma~\ref{lem:regularfbar}, we know that  $\bar f$ and $\bar g$ belong to $\BC^2$. 
This implies  that there exists a unique solution $\bar x_t$ to
\begin{equ}
d\bar x_t = \bar f(\bar x_t)\,dB_t + \bar g(\bar x_t)\,dt\;, \qquad \bar x_0=x_0,
\end{equ}
where the integral against $B$ is interpreted pathwise as a Young integral, see
for example \cite{TerryOld}.
We apply
Theorem~\ref{fixed-point-map} with $h = f-\bar f$, yielding the bound
\begin{equ}
\left\|\int_0^{\fat} \big( f(x_r, y_r^\eps) -\bar f(x_r) \big)\, dB_r\right\|_{\beta,p} \lesssim
  \eps^\kappa \bigl(1+\|x\|_{\alpha, p}\bigr)\;,
\end{equ}
uniformly over $x$ and over $\eps \in (0,1]$, where $y^\eps$ is obtained from $x$ by
solving \eqref{e:equationy}. Here, $\kappa> 0$ is small enough and $\alpha < {1\over 2}$ and
$p$ are such that $\alpha + H > 1 + \f1p$.
Since  $\sup_\epsilon\|x_s^\epsilon\|_{\alpha, p}<\infty$ by Corollary~\ref{co:inform-bounds}, we conclude that 
$$
\Big\|\int_0^{\fat} \left ( f(x_s^\epsilon, y_s^\epsilon) -\bar f(x_s^\epsilon) \right)dB_s\Big\|_{\beta, p}\lesssim \epsilon^\kappa\;,
$$
and a similar bound holds for
$\|\int_0^t \left ( g(x_s^\epsilon, y_s^\epsilon) -\bar g(x_s^\epsilon) \right)ds\|_{\beta, p}$. Setting
$$\bar x_t^\epsilon\eqdef x_0+\int_0^t \bar f(x_s^\epsilon) \, dB_s+ 
\int_0^t \bar g(x_s^\epsilon) \, ds\;,$$
we have just shown that the processes $x_t^\epsilon $ and  $\bar x_t^\epsilon$  are close in $\CB_{\beta, p}$:
\begin{equ}[e:boundAlmostDone]
\|\bar x^\epsilon-x^\epsilon\|_{\beta, p} \lesssim   \epsilon^\kappa.
\end{equ}
It remains to show that $\bar x_t^\epsilon$ and $\bar x_t$ are close in the $\beta$-H\"older norm, for which we begin 
by obtaining a pathwise estimates on their $\beta$-H\"older norm. 
Writing
$$ x_t^\epsilon =  x_t^\epsilon- \bar x_t^\epsilon+x_0+\int_0^t \bar f(x_s^\epsilon) \, dB_s+ 
\int_0^t \bar g(x_s^\epsilon) \, ds\;,$$
we may apply Lemma~\ref{general-convergence} to compare $x_t^\epsilon$ and $\bar x_t$,  where we take $F=(\bar f, \bar g)$ and $b_t=(B_t(\omega),t)$, $Z_0=x_0$ and $\bar Z_0=x_0+x_t^\epsilon- \bar x_t^\epsilon$, we have the pathwise estimate:
\begin{equs}
| x_t^\epsilon-\bar x_t|_{\beta} 
&\lesssim \exp\left(C | B|_\beta^{1/\beta} +C+  C |\bar x^\epsilon-x^\epsilon |_\beta^{1/\beta} \right) | \bar x^\epsilon-x^\epsilon |_{\beta}\;.
\end{equs}
(Note that the $\beta$-Hölder seminorm of the constant $x_0$ vanishes.)
Since (modulo changing $\beta$ slightly), we already know from \eqref{e:boundAlmostDone}
that $|\bar x^\epsilon-x^\epsilon |_{\beta} \to 0$ in probability at rate $\eps^\kappa$, this
concludes the proof.
\end{proof}

\appendix
\section{Estimates on conditioned fBm}

The purpose of this appendix is to provide a proof of Lemma~\ref{lemma-kernel}, as well as
to provide an explicit representation of $R$ used in Lemma~\ref{lemma-covariance}.
For this, we first derive a suitable representation for the mixed derivative of the
covariance function $R$ of $\tilde B$.

\begin{lemma}\label{lem:R}
Let $R$ be as above, $c_1=\cst 1, c_3 = \cst 1 \cst 3$, and $c_2=-c_3  \int_0^{\infty} u^{H-{1\over 2}}(1+u)^{H-{5\over 2}}\,du$.
 Then 
for $r < s$, one has the identity
\begin{equ}[e:exprR]
\d_{r,s}^2 R(r,s) = c_1 r^{H-{1\over 2}} s^{H-{3\over 2}} + c_2 (s-r)^{2H-2} + 
c_3 \int_r^\infty v^{H-{1\over 2}} (s-r+v)^{H-{5\over 2}}\,dv\;.
\end{equ} 
\end{lemma}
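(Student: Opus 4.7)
Since $\tilde B_r = \int_0^r (r-v)^{H-1/2}\,dW_v$, the Itô isometry (applied componentwise) gives, for $r \le s$,
\begin{equ}
R(r,s) = \int_0^{r} (r-v)^{H-\frac{1}{2}}(s-v)^{H-\frac{1}{2}}\,dv\;.
\end{equ}
I would first differentiate under the integral sign. Writing $a = H - \tfrac{1}{2}$, the derivative $\partial_s R$ is obtained by differentiating $(s-v)^a$ in the integrand. The subsequent derivative $\partial_r$ produces both an interior term and a boundary term from the upper limit $v=r$; since $a > 0$ the boundary contribution $(r-r)^a (s-r)^{a-1}$ vanishes, leaving
\begin{equ}
\d_{r,s}^2 R(r,s) = a^2 \int_0^r (r-v)^{a-1}(s-v)^{a-1}\,dv
= a^2\int_0^r u^{a-1}(s-r+u)^{a-1}\,du\;,
\end{equ}
after the substitution $u = r-v$. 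The integrand is integrable at $u = 0$ thanks to $H > \tfrac{1}{2}$.

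The next step is integration by parts, choosing $dV = u^{a-1}\,du$ so that $V = u^a/a$, and $U = (s-r+u)^{a-1}$. The boundary term at $u=0$ vanishes (again because $a > 0$), and the boundary term at $u = r$ evaluates to $r^a s^{a-1}/a$. After multiplying by $a^2$ this gives the first contribution $c_1\, r^{H-1/2} s^{H-3/2}$ with $c_1 = a = H-\tfrac12$, together with a remainder
\begin{equ}
-a(a-1) \int_0^r u^{a}(s-r+u)^{a-2}\,du = -c_3 \int_0^r u^{H-\frac{1}{2}}(s-r+u)^{H-\frac{5}{2}}\,du\;,
\end{equ}
with $c_3 = a(a-1) = (H-\tfrac{1}{2})(H-\tfrac{3}{2})$.

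Finally I would split $-\int_0^r = \int_r^\infty - \int_0^\infty$ (the integral on $[0,\infty)$ converges for $H \in (\tfrac12, 1)$ since the integrand decays like $u^{2H-3}$ at infinity). The piece $\int_r^\infty$ yields exactly the third term of \eqref{e:exprR}. For the remaining piece on $[0,\infty)$, the substitution $u = (s-r)w$ produces the scaling factor $(s-r)^{2H-2}$ and the dimensionless integral $\int_0^\infty w^{H-1/2}(1+w)^{H-5/2}\,dw$, converging for the same range of $H$; this is precisely $c_2(s-r)^{2H-2}$ with the asserted constant $c_2 = -c_3\int_0^\infty w^{H-1/2}(1+w)^{H-5/2}\,dw$. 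Every step is mechanical; the main care required is checking the two integrability conditions (at $u=0$ and at $u=\infty$), both of which reduce to $H \in (\tfrac12, 1)$, and keeping track of the signs when re-expressing $-\int_0^r$ in terms of $\int_r^\infty$.
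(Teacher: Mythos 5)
Your computation is correct, and it reaches \eqref{e:exprR} by a genuinely different route from the paper. You work directly from the kernel representation $R(r,s)=\int_0^{r\wedge s}(r-v)^{H-1/2}(s-v)^{H-1/2}\,dv$, differentiate twice, integrate by parts once, and split $-\int_0^r=\int_r^\infty-\int_0^\infty$; all three terms of \eqref{e:exprR}, with the correct constants $c_1=H-\tfrac12$, $c_3=(H-\tfrac12)(H-\tfrac32)$ and the asserted $c_2$, come out of this, and your two integrability checks are exactly the right ones. The paper instead exploits self-similarity: it writes $R(r,s)=(r\wedge s)^{2H}\hat R(|r-s|/(r\wedge s))$, reduces everything to the one-variable function $G(t)=(2H-1)\hat R'(t)-(t+1)\hat R''(t)$ from \eqref{e:relRG}, derives the identity $t\hat R''(t)=(2H-1)\hat R'(t)-c_1(1+t)^{H-3/2}$ by differentiating a rescaled form of $\hat R'$, and then computes $\hat R''$ explicitly. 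Your approach is more direct and makes the provenance of each term transparent (boundary term of the integration by parts, the rescaled $\int_0^\infty$ piece, and the $\int_r^\infty$ remainder); the paper's detour through $G$ has the side benefit that the small- and large-$t$ asymptotics of $G$ quoted in Lemma~\ref{lemma-covariance} drop out along the way. One small point worth a sentence in a final write-up: when you apply Leibniz's rule the second time, the interior derivative $(r-v)^{a-1}$ is singular (though integrable) at $v=r$, so differentiation under the integral sign needs a word of justification; the cleanest fix is to substitute $w=r-v$ \emph{before} differentiating in $r$, which moves the $r$-dependence into the smooth factor $(s-r+w)^{a-1}$ and, incidentally, produces the boundary term $a\,r^a s^{a-1}$ directly, letting you skip the integration by parts altogether.
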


\begin{proof}
Recall that one has the identity
\begin{equ}[e:basicG]
G(t) = (2H-1) \hat R'(t) - (t+1) \hat R''(t)\;,
\end{equ}
with
\begin{equ}
\hat R(t)  = \int_0^1 (1-s)^{H-{1\over 2}}(1+t-s)^{H-{1\over 2}}\,ds\;.
\end{equ}
We have
\begin{equs}[e:F']
F'(t) &= \cst1\int_0^1 (1-s)^{H-{1\over 2}}(1+t-s)^{H-{3\over 2}}\,ds\\
&= \cst1 t^{2H-1} \int_0^{1/t} u^{H-{1\over 2}}(1+u)^{H-{3\over 2}}\,du
\end{equs}
where we used the change of variables $s=1+tu$.  Differentiating
the second line of \eqref{e:F'} immediately gives
\begin{equ}[e:F'']
t \hat R''(t) = (2H-1)\hat R'(t) - c_1 (1+t)^{H-{3\over 2}}\;.
\end{equ}
On the other hand,  differentiating
the first line of \eqref{e:F'}, we obtain
\begin{equs}
\hat R''(t) &= \cst1 \cst3 \int_0^1 (1-s)^{H-{1\over 2}}(1+t-s)^{H-{5\over 2}}\,ds\\
&= c_3  \int_0^1 u^{H-{1\over 2}}(t+u)^{H-{5\over 2}}\,du\\
&= c_3  \int_0^\infty u^{H-{1\over 2}}(t+u)^{H-{5\over 2}}\,du-c_3  \int_1^\infty u^{H-{1\over 2}}(t+u)^{H-{5\over 2}}\,du\\ 
&= c_3 t^{2H-2} \int_0^{\infty} u^{H-{1\over 2}}(1+u)^{H-{5\over 2}}\,du - c_3  \int_1^\infty u^{H-{1\over 2}}(t+u)^{H-{5\over 2}}\,du
\;.
\end{equs} 
Substituting \eqref{e:F''} into \eqref{e:basicG},
we can then rewrite $G$ as $G(t) = c_1 (1+t)^{H-{3\over 2}} - \hat R''(t)$, so that for
 $c_2=- c_3 \int_0^{\infty} u^{H-{1\over 2}}(1+u)^{H-{5\over 2}}\,du$,
\begin{equ}
G(t) = c_1 (1+t)^{H-{3\over 2}} + c_2 t^{2H-2} 
 + c_3 \int_{1}^{\infty} u^{H-{1\over 2}}(t+u)^{H-{5\over 2}}\,du\;,
\end{equ}
and the claim follows by substituting this into \eqref{e:relRG}.
\end{proof}

\begin{proof}[of Lemma~\ref{lemma-kernel}]
It follows from the
fact that a conditional variance is always smaller than the full variance that
\begin{equ}
|h|_{\RKHS}^2 \le C \Bigl| \int_0^T \int_0^T |r-s|^{2H-2}\, h(r)h(s)\,dr\,ds\Bigr| =: 2C|I|\;.
\end{equ}
By homogeneity, it suffices to consider the case $T=1$ and, by symmetry, we can restrict the domain
of integration to the region where $r \le s$.
To bound $I$, we then note that we can find a constant $c$ such that
\begin{equs}
I &=  \int_0^1  \int_0^s (s-r)^{2H-2} h(r)\,dr\, h(s)\,ds \\
&= c \int_0^1  \int_0^s \int_r^s (s-u)^{H-{3\over 2}}(u-r)^{H-{3\over 2}}\,du \, h(r)\,dr\, h(s)\,ds \\
&= c \int_0^1  \int_u^1 (s-u)^{H-{3\over 2}} h(s)\,ds \int_0^u (u-r)^{H-{3\over 2}}h(r)\,dr\,du\;.
\end{equs}
Performing one integration by parts, we note that 
\begin{equs}
\Big|\int_0^u (u-r)^{H-{3\over 2}}h(r)\,dr\Big| &= 
\Big|\cst3 \int_0^u (u-r)^{H-{5\over 2}}\bigl(\hat h(u) - \hat h(r)\bigr)\,dr\Big| \\
&\lesssim |h|_{-\kappa} u^{H-{1\over 2}-\kappa}\;,
\end{equs}
where we used the fact that $\kappa < H-{1\over 2}$ to guarantee integrability at $r = u$
and $\hat h$ denotes a primitive of $h$.
The other factor is bounded in the same way, so that
\begin{equ}
|I| \lesssim |h|_{-\kappa}^2\int_0^1 (1-u)^{H-{1\over 2}-\kappa}u^{H-{1\over 2}-\kappa}\,du\lesssim |h|_{-\kappa}^2\;,
\end{equ}
as required.
\end{proof}
%
%
%
%
\endappendix

\bibliographystyle{Martin}
\bibliography{./refs}

\end{document}